\renewcommand{\baselinestretch}{1.236}
\newtheorem{proposition}{Proposition}[section]
\newtheorem{theorem}[proposition]{Theorem}
\newtheorem{corollary}[proposition]{Corollary}
\newtheorem{lemma}[proposition]{Lemma}
\newtheorem{remark}[proposition]{Remark}
\newtheorem{example}[proposition]{Example}
\newcommand{\nc}{\newcommand}
\nc{\md}{\mathrm{d}}
\nc{\I}{{\mathbf 1}}
\nc{\bU}{\mathbb{U}}
\nc{\E}{\mathbb{E}}
\nc{\bN}{{\mathbf N}}
\nc{\bfM}{{\mathbf M}}
\nc{\cB}{{\mathcal B}}
\nc{\cC}{{\mathcal C}}
\nc{\cK}{{\mathcal K}}
\nc{\cL}{{\mathcal L}}
\nc{\R}{{\mathbb R}}
\nc{\M}{{\mathcal M}}
\nc{\N}{{\mathbb N}}
\nc{\cN}{{\mathcal N}}
\nc{\Z}{{\mathbb Z}}
\nc{\bF}{{\mathbf F}}
\nc{\bS}{{\mathbf S}}
\nc{\bbS}{{\mathbb S}}
\nc{\tc}{\tilde{c}}
\nc{\hC}{\tilde{C}}
\nc{\hc}{\tilde{c}}
\nc{\tphi}{\tilde{\phi}}
\nc{\tPhi}{\tilde{\Phi}}
\nc{\Psif}{\Psi^{!}}
\nc{\psif}{\psi^{!}}
\nc{\tbN}{\tilde{\mathbf{N}}}
\nc{\tx}{\tilde{x}}
\nc{\ty}{\tilde{y}}
\nc{\talpha}{\tilde{\alpha}}
\DeclareMathOperator{\conv}{conv}
\DeclareMathOperator{\reach}{reach}
\DeclareMathOperator{\tr}{tr}
\nc{\BP}{\mathbb{P}}
\nc{\BE}{\mathbb{E}}
\nc{\BQ}{\mathbb{Q}}
\nc{\RR}{{\sf R}}
\DeclareMathOperator*{\argmin}{arg\,min}
\DeclareMathOperator{\Nor}{Nor}
\DeclareMathOperator{\Nsf}{{\sf N}}
\renewcommand{\d}[1]{ \mathrm{d}{#1} }
\newcommand{\1}{\mathds{1}}
\newcommand{\ten}[3]{$\Phi_#3^{#1,#2}$}
\newcommand{\EP}[4]{$\mathbb{E}\left[ V_{#1}\left(P_{{#3},{#4}}^{#2}\right) \right]$}
\newcommand{\nn}{\ell}
\numberwithin{equation}{section}
\nc{\ch}[1]{\textcolor{red}{#1}}
\begin{document} 

\renewcommand{\thefootnote}{\fnsymbol{footnote}}
\author{D. Hug\footnotemark[1], M.A. Klatt\footnotemark[2], and D. Pabst\footnotemark[1]\hspace{5pt}\footnotemark[3]}
\footnotetext[1]{Karlsruhe Institute of Technology, Institute of Stochastics, 76131 Karlsruhe, Germany}
\footnotetext[2]{German Aerospace Center (DLR), Institute for AI Safety and Security, Wilhelm-Runge-Str. 10, 89081 Ulm, Germany; German Aerospace Center (DLR), Institute for Material Physics in Space, 51170 Köln, Germany; Department of Physics, Ludwig-Maximilians-Universität München, Schellingstr. 4, 80799 Munich, Germany}
\footnotetext[3]{Friedrich Alexander University Erlangen-Nuremberg, Institute of Theoretical Physics, Staudtstr. 7, 91058 Erlangen, Germany}

%\title{Tensor valuations for voxelized data:\\ robust\mk{, asymptotically} unbiased estimators \st{in all dimensions}}
\title{Minkowski tensors for point clouds and voxelized data:\\ robust, asymptotically unbiased estimators}
\date{}
\maketitle

\begin{abstract} \noindent %
Minkowski tensors, also known as tensor valuations, provide robust
$n$-point information for a wide range of random spatial structures.
Local estimators for point clouds, e.g., representing voxelized data, however,
are unavoidably biased even in the limit of infinitely high resolution.
Here, we substantially improve a recently proposed, asymptotically
unbiased algorithm to estimate Minkowski tensors from point clouds.
Our improved algorithm is more robust and efficient. Moreover we
generalize the theoretical foundations for an asymptotically bias-free
estimation of the interfacial tensors, among others, to the case of finite unions of
compact sets with positive reach, which is relevant for many
applications like rough surfaces or composite materials. As a realistic
test case of random spatial structures, we consider random (beta) polytopes. We first
derive explicit expressions of the expected Minkowski tensors, which we
then compare to our simulation results. We obtain precise estimates with
relative errors of a few percent for practically relevant resolutions.
Finally, we apply our methods to real data of metallic grains and
nanorough surfaces, and we provide an open-source python package, which
works in any dimension.
\end{abstract}

\noindent
{\em Keywords:} tensor valuations, microstructure characterization, anisotropy, digitized image analysis

\vspace{0.2cm}
\noindent
2020 MSC: 94A08 · 68U10 · 60D05 · 53C65 · 28A75 · 62H35 · 52A22 

%\vspace{0.2cm}
%\noindent
%{\bf Version of \today, \DTMcurrenttime }

\newpage

%\addcontentsline{toc}{section}{Contents}
\renewcommand{\baselinestretch}{0.8}\normalsize
	     {\hypersetup{linkcolor=black}\small
               \tableofcontents
}	\renewcommand{\baselinestretch}{1.1}\normalsize

\section{Introduction}
\label{sec:intro}

Random spatial structures appear ubiquitously in nature and technology.
Examples (and their corresponding mathematical models) are heterogeneous
materials and porous media (random sets)~\cite{ ohser_statistical_2000,
  torquato_random_2002, ohser_3d_2009, adler_fractured_2013,
armstrong_porous_2019}, cellular tissues and foam-like structures
(tessellations)~\cite{ okabe_spatial_2000-1, rath_strength_2008,
klatt_cell_2017, stinville_multi-modal_2022}, rough surfaces (random
fields)~\cite{adler_random_2007, vanmarcke_random_2010,
spengler_strength_2019, rottger_contactengineeringcreate_2022}, and
bacterial colonies (particle processes)~\cite{chiu_stochastic_2013,
hansen_theory_2013, ziegel_estimating_2015} as well as error-correcting
code (high-dimensional sphere packings)~\cite{ zong_sphere_1999,
cohn_new_2003}. This variety of examples corresponds to a similar
diversity in random shapes.

In all of these cases, Minkowski tensors (also known as tensor
valuations)~\cite{schneider_stochastic_2008,JK2017} provide a
comprehensive, unified shape analysis. The Minkowski
tensors~\cite{schroder-turk_minkowski_2011} are generalizations of the
well-known Minkowski functionals (or intrinsic volumes) from integral
geometry~\cite{schneider_stochastic_2008}; see Section~\ref{sec:prelim}
for more details and the theoretical background. The intrinsic volumes
represent intuitive geometric information, e.g., on the volume, surface
area, or curvature; the Minkowski tensors additionally distinguish
different orientations with respect to these geometrical
properties~\cite{klatt_mean-intercept_2017}; see also
Fig.~\ref{fig:intro_schematic}.

\begin{figure}[t]
  \centering
  \includegraphics[width=\textwidth]{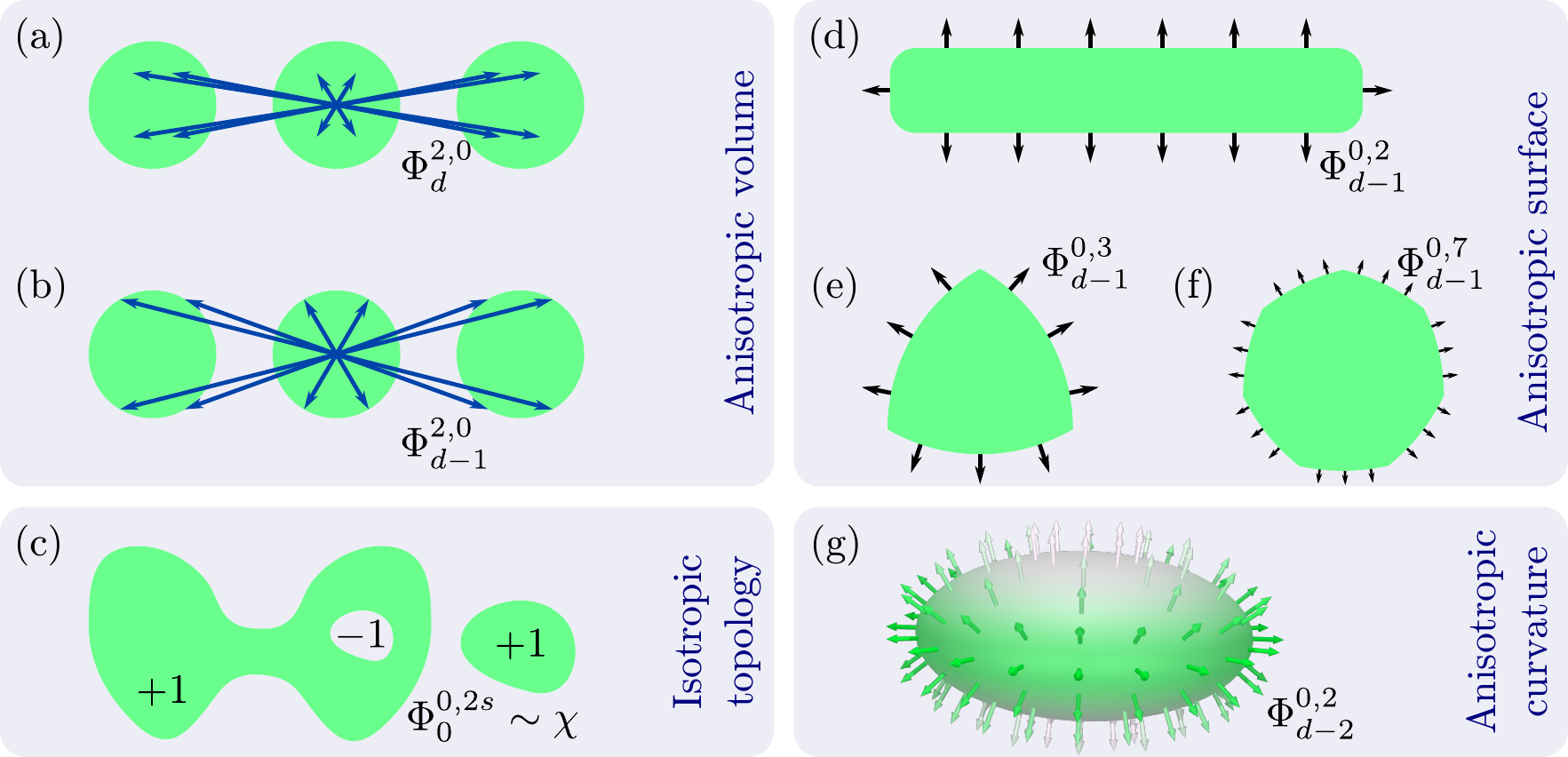}
  \caption{Distinct Minkowski tensors quantify different types of
    anisotropy: (a--b) moment tensors of the volume and boundary
    distributions capture the anisotropy of disk arrangements; (c) the
    Euler characteristic $\chi$ is a topological constant,
    generalizations to surface tensors $\Phi_0^{0,2s}$ are proportional
    to $\chi$ and the metric tensor and hence always isotropic, see
    \eqref{phi002s}; (d--f) anisotropy of the surface‑normal
    distribution is measured by surface tensors, different symmetries
    are captured by tensors of a corresponding rank; (g) curvature
    anisotropy is incorporated by an additional curvature
    function to the surface tensors.}
  \label{fig:intro_schematic}
\end{figure}

Minkowski tensors of different ranks capture symmetries of all orders.
They quantify the degree of anisotropy as well as the preferred
directions. They are comprehensive in the sense of Hadwiger's and
Alesker's characterization theorems~\cite{schneider_stochastic_2008},
intuitively speaking, they contain all motion-covariant, continuous, and
additive shape information. Since they are additive, Minkowski
functionals and tensors provide robust access to $n$-point
information~\cite{mecke_robust_1994}. Moreover, the definition in real
space allows for a convenient treatment of boundary
conditions~\cite{schroder-turk_tensorial_2010}.

The versatility of Minkowski functionals and tensors is demonstrated by
their ample successful applications in a wide range of fields, including
statistical physics~\cite{mecke_integral_1998, mecke_statistical_2000,
klatt_characterization_2022}, biology~\cite{beisbart_extended_2006,
barbosa_integral-geometry_2014, barbosa_novel_2019}, spatial
statistics~\cite{ziegel_estimating_2015, ebner2018}, image
analysis~\cite{ohser_statistical_2000, ohser_3d_2009}, as well as
astronomy and cosmology~\cite{gott_topology_1990,
schmalzing_minkowski_1998, joby_search_2019, klatt_detecting_2020,
collischon_tracking_2021, collischon_morphometry_2024}. Explicit
examples of random spatial structures characterized via Minkowski
functionals and tensors are porous media~\cite{arns_reconstructing_2003,
  Arns2010, klatt_anisotropy_2017, armstrong_porous_2019,
jiang_fast_2020}, composite
materials~\cite{ernesti_characterizing_2020}, nanorough
surfaces~\cite{spengler_strength_2019}, fluid
demixing~\cite{bobel_kinetics_2016}, and trabecular
bone~\cite{rath_strength_2008, klatt_mean-intercept_2017,
callens_local_2021}.

A challenge, however, is the application of Minkowski tensors to
point clouds, e.g., representing voxelized data, where
a systematic bias can often not be avoided even in the
limit of infinitely high resolution. In contrast, for voxelized
gray-scale data, marching-cube algorithms can create triangulated meshes with
continuous orientations, which results in asymptotically bias-free
algorithms~\cite{MantzJacobsMecke2008, legland_computation_2011,
svane_estimation_2014-1, svane_estimation_2014}. If no gray-scale data
is available, any local algorithm for Minkowski tensors using
black-and-white voxels will be asymptotically biased, i.e., the estimator will
not converge to the true value in the limit of infinite
resolution~\cite{jensen_valuations_2017}.

An algorithmic solution to the problem was recently proposed in
\cite{hug_voronoi-based_2017}, which had been inspired by
\cite{MR2594445,5669298}. A more general framework, admitting to work with
general distance-like functions (such as power distances and,  
correspondingly, power cells) to improve the robustness of estimation
procedures against outliers, while preserving the resilience with
respect to Hausdorff noise, was explored in \cite{CLMT15}; for a discussion of and
references to related methods, see \cite{CLMT15}. The key idea
of the algorithmic solution in \cite{hug_voronoi-based_2017} is to
construct Voronoi cells that can encode global information and thus
guarantee asymptotically bias-free estimators of all Minkowski tensors
for sets of positive  reach. This requirement of a positive reach,
however, excludes many interesting applications, like rough surfaces and
composite materials. More importantly, the proposed algorithm suffered
from numerical instabilities since it relies on a matrix inversion,
which is slow and sensitive to rounding errors, especially for
ill‑conditioned matrices.

Here, we substantially advance the robustness of the algorithm by
replacing the matrix inversion by a least squares fit. The latter avoids
numerical instabilities and allows for more data from parallel sets than
needed, which further enhances robustness against statistical and
systematic errors. Furthermore, we increase its efficiency by using an
unbiased estimator of the Voronoi tensors (also called Voronoi volume
integrals). This revised algorithm, here called Voronoi-LSQ algorithm,
is also asymptotically unbiased for sets of positive reach, as we
rigorously prove; see Remark \ref{rem:citeintro}. We demonstrate its
robustness and accuracy for exemplary cases of geometric shapes,
stochastic models, and real data.

Even for a set without positive reach, our algorithm achieves
surprisingly good results with relative deviations of less than 3\% in
the eigenvalues of the interfacial tensors, see Table
\ref{table:cuttedRect}, at least for the simple example of a rectangular
shell. Since such precision is not theoretically substantiated, we also
introduce an alternative method, here called Voronoi-FD algorithm, to
estimate interfacial tensors, for which we are able to prove that its
corresponding estimator is asymptotically unbiased for a large class of
sets, including parallel sets of compact sets or finite unions of
compact sets with positive reach (see Theorem \ref{cor4.5:convergence}).

Together with the paper, we publish an open-source python package \cite{VorominkCode}
including both methods. It is the first implementation of Minkowski
tensors that we know of that can be applied to any dimension. It can
also be applied to any discrete representation of the set, i.e., not
only for pixelated images but representations using non-cubic lattice or
even random point patterns. The last generalization is especially
helpful in high dimensions, where,  similarly to numerical integration,
lattice representations may converge too slowly. 

In Section~\ref{sec:prelim}, we provide an introduction to integral
geometry and specifically to Minkowski tensors. In
Section~\ref{sec:asymptotics}, we first further explain and illustrate
the main theoretical foundations from geometric measure theory and then
establish two results required for the algorithms presented in Section
\ref{sec:Algorithms}. Details on the algorithm, its theoretical
foundation,  implementation, and suitable choices of parameters are
discussed in Sections~\ref{sec:VoronoiTensorEstimation}--\ref{sec:choice}.

We demonstrate the reliability of our methods for simple geometric test
cases in Sections~\ref{sec:ConvexTests} and \ref{sec:NonconvexTests}
(see also Table \ref{table:Cor4.1}). For a stochastic test case, we
apply our estimator to a classic example of random convex sets,
isotropic random polytopes (see Section~\ref{sec:beta}), both in theory
(see Section~\ref{sec:betatheo}) and simulations (see
Section~\ref{sec:betasim}).

In Section~\ref{sec:exp}, we turn to experimental data. First, we
analyze metallic grains in a polycrystalline nickel-based superalloy
using data from \cite{stinville_multi-modal_2022}. The Minkowski tensors
characterize the cells as being distinctly more anisotropic with respect
to curvature than the surface area  (see Section~\ref{sec:grains}).
Then, we determine the interfacial tensors of nanorough surfaces from
\cite{spengler_strength_2019} (see Section~\ref{sec:nano}).

\section{Minkowski tensors in integral geometry}
\label{sec:prelim}

In this section, we provide a brief introduction to Minkowski tensors (or tensor valuations) and summarize their basic properties. For basic concepts from convex and integral geometry not introduced here, we refer to \cite{HugWeil2020,S14}. Let $\cK^d$ denote the set of all compact convex subsets (convex bodies) of  $\R^d$. We denote by $\langle\cdot\,,\cdot\rangle$ and $\|\cdot\|$ a Euclidean scalar product and the induced norm. We write $B^d:=\{x\in\R^d:\|x\|\le 1\}$ for the Euclidean unit ball centered at the origin $o$ and $\mathbb{S}^{d-1}=\partial B^d=\{x\in\R^d:\|x\|= 1\}$ for its boundary. We set $B^d(x,r):=x+rB^d$ for $x\in\R^d$ and $r\ge 0$. The volume of $B^d$ is denoted by $\kappa_d=\pi^{d/2}/\Gamma(1+d/2)$ and $\omega_d=d\kappa_d$ is its surface area, i.e., the $(d-1)$-dimensional volume of $\mathbb{S}^{d-1}$. 
The intrinsic volumes $V_i:\cK^d\to [0,\infty)$, $i\in\{0,\ldots,d\}$, are a collection of $d+1$ basic functionals on $\cK^d$ that are distinguished by their properties:
\begin{itemize}
\item They form a basis of the vector space of continuous, isometry  invariant, additive, functions on $\cK^d$; $V_i$ is positively homogeneous of degree $i$ \cite[Theorem 4.20]{HugWeil2020}.
\item They arise as coefficient functionals of a Steiner formula \cite[Theorem 3.10]{HugWeil2020}.
\item They satisfy various integral geometric formulas such as Crofton formulas, projection formulas and (intersectional as well as additive) kinematic formulas \cite[Chap.~5]{HugWeil2020}.
\item They can be additively extended to polyconvex sets (finite unions of compact, convex sets) and play a key role in the classical theory of geometric valuations (see \cite[Chap.~1]{SVJK} and \cite[Chap.~4.5]{HugWeil2020}). 
\end{itemize}
In particular, $V_d$ is the volume functional, $2V_{d-1}$ is the surface area, $V_1$ is proportional to the mean width functional and $V_0$ is the Euler characteristic (see \cite[Section 3.3]{HugWeil2020}).  
The intrinsic volumes have also been called Minkowski functionals (or quermassintegrals, though with a different normalization and notation). Due to their properties, the Minkowski functionals are useful descriptors that can be applied for the analysis of complex spatial structure. 

Since the Minkowski functionals are isometry invariant, their usefulness in characterizing anisotropic features of objects under investigation is limited. The Minkowski tensors (see \cite[Chap.~2]{HSVJK}), which we introduce next, provide a more general set of tensor-valued geometric descriptors that are sensitive to position and orientation of geometric objects in Euclidean space (see Fig.~\ref{fig:intro_schematic} for an illustration). For detailed information on the determination and reconstruction of (classes of) convex bodies from volume or surface tensors we refer to \cite{Kous2017,KK2016,KS2021} and to \cite[Chap.~7]{LNP+}. We start by providing a brief introduction to symmetric tensors and tensor-valued functionals as needed for the present purpose.

In the following, we use the 
scalar product of $\R^d$ to identify $\R^d$ with its dual space, hence a  vector $a\in \R^d$ will be identified with the linear functional
$x\mapsto \langle a, x\rangle$ from $\R^d$ to $\R$. For $r\in \N_0$, an
\emph{$r$-tensor}, or tensor of rank~$r$, on $\R^d$ is  an
$r$-linear mapping from $(\R^d)^r$ to $\R$. 
The vector space of all $r$-tensors of $\R^d$ has dimension $d^r$. If $e_1,\ldots,e_d$ is the standard basis of $\R^d$, then the $r$-tensors  $e_{i_1}\otimes \cdots\otimes e_{i_r}$, $1\le i_1,\ldots,i_r\le d$, are a basis of the vector space of all tensors of rank $r$ in $\R^d$. Here we have $e_{i_1}\otimes \cdots\otimes e_{i_r}(x_1,\ldots,x_r)=\prod_{j=1}^r\langle e_{i_j},x_j\rangle$ for $x_1,\ldots,x_r\in\R^d$.

A tensor is \emph{symmetric} if it is invariant under permutations of its arguments. By
$\mathbb{T}^{r}(\R^d)$, or simply by $\mathbb{T}^r$, we denote the real vector space (with its
standard topology) of symmetric $r$-tensors on $\R^d$. We define 
$\mathbb{T}^{0}={\mathbb R}$, and, by the identification made
above, we have $\mathbb{T}^1=\R^d$. In any case,  $\dim \mathbb{T}^r(\R^d)=\binom{d+r-1}{r}$.

The \emph{symmetric tensor product} $a_1\odot \dots\odot a_k\in\mathbb{T}^{r_1+\dots+r_k}$ of $a_i\in
\mathbb{T}^{r_i}$, $i=1,\dots,k$, is a symmetric tensor of rank $r_1+\cdots+ r_k$. Denoting by ${\mathcal S}(m)$ the group of bijections of $\{1,\ldots,m\}$,   
$s_0:=0$ and $s_i:=r_1+\dots+r_i$, for $i=1,\dots,k$, it is defined by
$$
(a_1\odot \dots\odot a_k)(x_1,\dots,x_{s_k})
:= \frac{1}{s_k!} \sum_{\sigma\in {\mathcal S}(s_k)} \prod_{i=1}^k
a_i(x_{\sigma(s_{i-1}+1)},\dots,x_{\sigma (s_i)})
$$
for $x_1,\dots,x_{s_k}\in {\mathbb R}^d$. 
In this way, the space of symmetric tensors (of arbitrary rank)
becomes an associative, commutative algebra with unit. For symmetric tensors $a,b,a_i$ we will use the abbreviations $a\odot b=:ab$ and  $a_1\odot \dots\odot a_k=: a_1\cdots a_k$, and we write $a^r$ for the $r$-fold symmetric tensor product of $a$, if $r\in\N$, and $a^0:= 1$. 
If $r_1=\ldots=r_k=1$ and $x_1,\ldots,x_k\in\R^d$, then
$$
a_1\cdots a_k(x_1,\ldots,x_k)
=\frac{1}{k!}\sum_{\sigma\in\mathcal{S}(k)} \prod_{i=1}^k \langle a_i,x_{\sigma(i)}\rangle,
$$
and hence, for  $a\in\R^d$ and $r\ge
1$,  the $r$-fold symmetric tensor product of $a$ satisfies
\begin{equation*}
  a^r(x_1,\dots,x_r)=\langle a, x_1\rangle  \cdots \langle a, x_r\rangle. 
\end{equation*}
The scalar product, $Q(x,y) = \langle x, y\rangle$, $x,y\in \R^d$, 
is a symmetric tensor of rank two; we call $Q$ the \emph{metric
  tensor}. Clearly, if $e_1,\ldots,e_d$ is an orthonormal basis of $\R^d$, then $Q=e_1^2+\cdots+e_d^2$. 

Let $(e_1,\dots,e_d)$ be an orthonormal basis of ${\mathbb R}^d$.
Then the symmetric tensors $e_{i_1}\cdots e_{i_r}$ with
$1\le i_1\le\dots\le i_r\le d$ form a basis of $\mathbb{T}^r(\R^d)$. 
The corresponding coordinate representation of $T\in\mathbb{T}^r(\R^d)$ is 
\begin{equation}\label{02-1.2a}
T=\sum_{1\le i_1\le\dots\le i_r\le d} t_{i_1\dots i_r} e_{i_1}\cdots e_{i_r}\quad \text{with} \quad 
t_{i_1\dots i_r} =\binom{r}{m_1\dots m_d}T(e_{i_1},\dots,e_{i_r}),
\end{equation}
where 
$m_k:=|\{\ell\in\{1,\ldots,r\}:i_\ell=k\}|$, $k=1,\ldots,d$, 
counts how often the number $k$ appears among the indices
$i_1,\dots,i_r$.  
In particular, for $d=r=2$, we have $T(e_1,e_1)=t_{11}$, $T(e_2,e_2)=t_{22}$ and $T(e_1,e_2)=T(e_2,e_1)=\frac{1}{2}t_{12}$, hence
$T=T(e_1,e_1)e_1^2+2T(e_1,e_2)e_1e_2+T(e_2,e_2)e_2^2$ 
with
$2T(e_1,e_2)e_1e_2=T(e_1,e_2)e_1e_2+T(e_2,e_1)e_2e_1$. 

For $T\in \mathbb{T}^r(\R^d)$ we introduce a norm by
$$
|T|:=\sup\{|T(x_1,\ldots,x_r)|:x_i\in\R^d,\|x_i\|\le 1,i=1,\ldots,r\}.
$$
If $a_i\in
\mathbb{T}^{r_i}$, $i=1,\dots,k$, then
\begin{equation}\label{eq:neua1}
|a_1\odot \dots\odot a_k|\le |a_1|\cdots |a_k|.
\end{equation}
In particular, if $x\in\R^d$ and $u\in \mathbb{S}^{d-1}$, then $|x^ru^s|\le \|x\|^r$. Moreover, if $x_1,\ldots,x_m\in\R^d$ and $y_1,\ldots,y_m\in\R^d$, then 
\begin{equation}\label{eq:neua2}
\left|\bigodot_{i=1}^m x_i-\bigodot_{i=1}^m y_i\right|\le 
\sum_{j=1}^m\|x_j-y_j\|\prod_{i=1}^{j-1}\|x_i\|\prod_{i=j+1}^{m}\|y_i\|.
\end{equation}

For $T\in\mathbb{T}^r(\R^d)$ and a rotation $\vartheta\in O(d)$ of $\R^d$, we define the operation of $\vartheta$ on $T$ in such a way that the resulting tensor $\vartheta T\in \mathbb{T}^r(\R^d)$ is given by  $(\vartheta T) (x_1,\ldots,x_r)=T(\vartheta^{-1}x_1,\ldots,\vartheta^{-1} x_r)$ for $x_1,\ldots,x_r\in\R^d$.

In order to introduce the basic Minkowski tensors of (nonempty) convex bodies, we use the support measures which can be considered as local versions of the intrinsic volumes. For a convex body $K\subset\R^d$, the support measure $\Lambda_j(K,\cdot)$, for $j\in\{0,\ldots,d-1\}$, is a Borel measure on $\R^d\times \mathbb{S}^{d-1}$ that arises as a coefficient in a local Steiner formula (compare \eqref{eq1} for a much more general relation) and satisfies $\Lambda_j(K,\R^d\times\mathbb{S}^{d-1})=V_j(K)$. 

For instance, if $P\subset\R^d$ is a polytope, $\mathcal{F}_j(P)$ is the finite set of $j$-dimensional faces of $P$, and $N(P,F)$ is the $(d-j)$-dimensional normal cone of $P$ at $F\in \mathcal{F}_j(P)$, then
\begin{equation}\label{eq:interpretlambda1}
\Lambda_j(P,\cdot)=\frac{1}{\omega_{d-j}}\int_F\int_{N(P,F)\cap\mathbb{S}^{d-1}}\1\{(x,u)\in\cdot\}\, \mathcal{H}^{d-1-j}(\md u)\, \mathcal{H}^{j}(\md x),
\end{equation}
where $\mathcal{H}^k$ denotes the $k$-dimensional Hausdorff measure,  for $k\in\{0,1,\ldots,d\}$, on the appropriate domain, respectively 
 (see Fig.~\ref{fig:PolytopeNormalcones} for an illustration of faces and normal cones of a polytope). For a face $F$ of a polytope $P$, the normal cone $N(P,F)$ is defined as the set of all $u \in \R^d$ such that $\langle z,u\rangle\le \langle x,u\rangle$ for all $z\in P$, where $x$ is an arbitrary point in the relative interior of $F$ (see \cite{S14}, Section 2.2). An alternative description is
provided in \cite{HugWeil2020} (Solution for Exercise 3.3.9). The pairs $(x,u)\in F\times N(P,F)$ are called support elements of $P$.  Since  support measures are concentrated on the support elements of the underlying set (also in the case of general sets), the terminology is justified.

\begin{figure}[t]
  \captionsetup{ labelfont = {bf}, format = plain }
  \centering
  \includegraphics[height=0.48\linewidth]{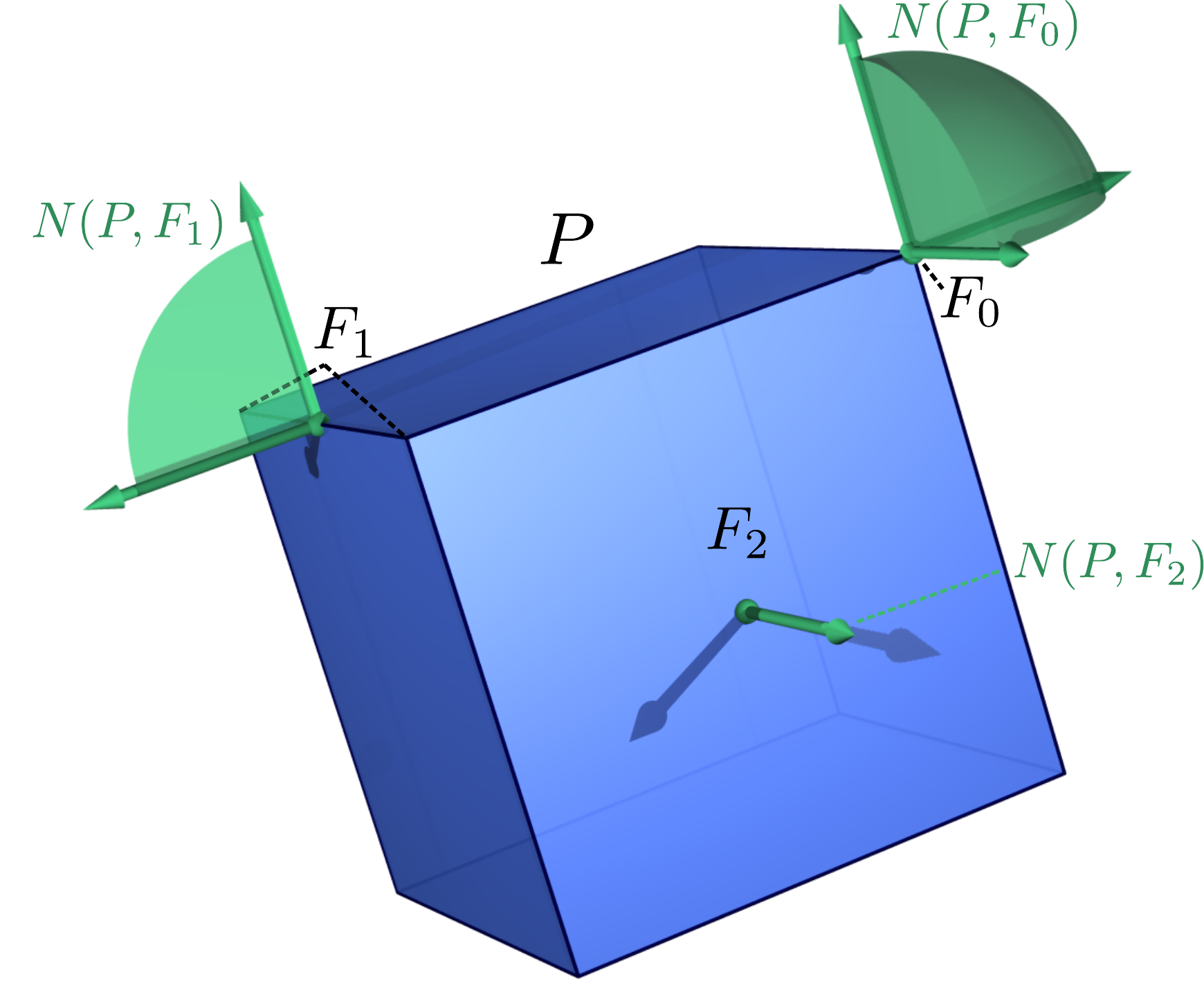}
  \caption{Cube $P$ with normal cones $N(P,F_i)$ at faces $F_i\in\mathcal{F}_i(P)$ for $i=0,1,2$.}
  \label{fig:PolytopeNormalcones}
\end{figure}

Another basic description of the support measures is available for convex bodies (or more general domains)  $K\subset \R^d$ with nonempty interior whose boundary $\partial K$  is of class $C^2$. If $\nu(K,x)$ denotes the unique exterior unit  normal vector of $K$ at $x\in\partial K$, then 
\begin{equation}\label{eq:interpretlambda2}
\Lambda_j(K,\cdot)=\frac{1}{\omega_{d-j}}\int_{\partial K}\1\{(x,\nu(K,x))\in\cdot\} \sum_{|I|=d-1-j}\prod_{i\in I}k_i(K,x)\, \mathcal{H}^{d-1}(\d x),
\end{equation}
where $k_1(K,x),\ldots,k_{d-1}(K,x)\ge 0$ are the principal curvatures of $K$ at $x\in\partial K$ and the summation extends over all subsets $I\subseteq\{1,\ldots,d-1\}$ of cardinality $|I|=d-1-j$ (if $j=d-1$ the empty product is interpreted as $1$).  
 
 For basic properties of the support measures, their connection to a local Steiner formula and a representation of $\Lambda_j(K,\cdot)$ for general convex bodies, which produces \eqref{eq:interpretlambda1} and \eqref{eq:interpretlambda2} as special cases, we refer to \cite[(2.78) and (4.28)]{S14} and Section \ref{sec:asymptotics}. The maps $K\mapsto \Lambda_j(K,\cdot)$ are measure-valued, additive (valuations), weakly continuous and for Borel sets $\alpha\subseteq\R^d$ and $\beta\subseteq \mathbb{S}^{d-1}$ they satisfy the covariance condition $\Lambda_j(gK,g\alpha\times g_0\beta)=\Lambda_j(K,\alpha\times\beta)$, where $g:\R^d\to\R^d$ is a rigid motion and $g_0$ is its rotational part (that is, $g(x)=g_0(x)+t$ for $x\in\R^d$, where $t\in\R^d$ is a translation vector and $g_0$ is a  rotation). More generally, the support measures have been defined (as signed measures) for the more general class of sets of positive reach (see Section \ref{sec:asymptotics} for a definition and  \cite[Chap. 4]{RZ2019}  for a detailed introduction) and for finite unions of compact sets with positive reach all of whose finite intersections have again positive reach (see \cite[Chap. 5]{RZ2019} and the literature cited there); the corresponding class of sets in $\R^d$ is denoted by $\mathcal{U}^d$. In fact, extensions of these measures to arbitrary closed subsets of $\R^d$ have been studied in \cite{HLW04,HS22} and will be considered in Section~\ref{sec:asymptotics}.   

If $K$ is a convex body or a compact set with positive reach and  $r,s\in\N_0$, then the basic \emph{Minkowski tensors} of $K$ are defined by
  \begin{equation}\label{02-1.5}
    \Phi_k^{r,s}(K) := \frac{1}{r!s!}\frac{\omega_{d-k}}{\omega_{d-k+s}} \int_{\R^d\times\mathbb{S}^{d-1}} x^ru^s\,\Lambda_k(K,\md(x,u)),
  \end{equation}
  for $k\in\{0,\dots,d-1\}$, 
  \begin{equation}\label{02-1.6}
    \Phi_d^{r,0}(K):=\frac{1}{r!}\int_K x^r\, \md x,
  \end{equation}
  and by
  \begin{equation*}
    \Phi_k^{r,s}(K):=0\qquad
    \text{if $k\notin \{0,\dots,d\}$ or $r\notin\N_0$  or
      $s\notin\N_0$  or
      $ k=d$, $s\not=0$}.
  \end{equation*}
  Note that the tensor product  $x^ru^s$ in the integrand  is the symmetric tensor product of the symmetric tensor powers  $x^r$ and $u^s$. 
  
Support measures and thus Minkowski tensors can be defined not only for convex bodies or compact sets with positive reach. By additivity their domain can be extended to the class of polyconvex sets (finite unions of convex bodies). Moreover they can be defined also for sets from the class $\mathcal{U}^d$, since such an extension is possible for the support measures. Furthermore, Minkowski tensors can be introduced for even more general classes of sets provided that the support measures are defined in a natural way and the integrals exist (see Section~\ref{sec:asymptotics} and the references given there).  An explicit description of the Minkowski tensors of polytopes is based on \eqref{eq:interpretlambda1} and used in Section~\ref{sec:ConvexTests},  in Section~\ref{sec:NonconvexTests} we evaluate formulas for nonconvex sets with smooth boundaries (compare \eqref{eq:interpretlambda2}). In the case of sets with positive reach or for even more general classes of compact sets $K$ (such as arbitrary finite unions of sets with positive reach), the support measures $\Lambda_j(K,\cdot)$ are consistently replaced by the reach measures $\mu_j(K;\cdot)$ in definition \eqref{02-1.5}; the relationship between these measures will be discussed in Section~\ref{sec:asymptotics}.

It is a straightforward consequence of the properties of the support measures of convex bodies that the Minkowski tensors of convex bodies are continuous, isometry (rigid motion)  covariant (for a detailed definition, especially of the underlying notion of polynomial behavior with respect to translations, we refer to \cite[Section~2.2]{HSVJK}) and additive tensor functionals (valuations) on the space of convex bodies in $\R^d$. By a fundamental result due to Alesker~\cite{Alesker_annalsofmathematics,Alesker_geomdedicata}, the vector space $T_p$ of all mappings $\Gamma:\mathcal{K}^d\to\mathbb{T}^p$ having these properties is spanned by the tensor valuations  $Q^m\Phi_k^{r,s}$, 
  where $m,r,s\in\N_0$ satisfy $2m+r+s=p$, where $k\in\{0,\dots,d\}$,
  and where $s=0$ if $k=d$. In contrast to the real-valued and vector-valued case, these tensor valuations are no longer linearly independent. A study of linear dependencies was initiated by McMullen and completed by Hug, Schneider, Schuster (for precise references and detailed statements, see, e.g., \cite[Theorems 2.6 and 2.7]{HSVJK}).   
  These general investigations imply that if $T_{2,k}$ denotes the subspace of all maps in $T_2$ that are homogeneous of degree $k$, then  $T_2=T_{2,0}\oplus\cdots \oplus T_{2,d+2}$ (direct sum decomposition), $\dim T_2=3d+1$, and a basis of $T_{2,k}$ is displayed in Table~\ref{tabbasesgencase}. Denoting by $T_2^\ast$ the subspace of translation invariant maps in $T_2$ and by $T_{2,k}^\ast$ the subspace of translation invariant maps in $T_{2,k}$, we have $T_2^\ast=T_{2,0}^\ast\oplus\cdots\oplus T_{2,d}^\ast$, $\dim T_2^\ast=2d$, and  a basis of $T_{2,k}^\ast$ is displayed in Table~\ref{tabbasesgencase}. Moreover, as a special consequence of McMullen's linear dependencies, we obtain 
  \begin{equation}\label{phi002s}
    \Phi_0^{0,2s} = \chi\cdot  (4\pi)^{-s} (s!)^{-1} Q^s,
  \end{equation}
for $s\in\N_0$, as indicated in Figure \ref{fig:intro_schematic}(c).

\begin{table}
  \centering
\captionsetup{
  labelfont = {bf},
  format = plain,
  belowskip = 1ex,
  width = \textwidth
}
\caption{Bases for the subspaces $T_{2,k}$, $k\in\{0,\ldots,d+2\}$, and $T_{2,k}^\ast$, $k\in\{0,\ldots,d\}$}
  
\begin{tabular}{c||c|c|c|c|c}
  \toprule
& $k=0$ & $k=1$ & $k\in\{2,\ldots,d-1\}$ & $k=d$ & $k\in\{d+1,d+2\}$\\[1ex]
  \midrule
$T_{2,k}$ & $\{Q\Phi_{0}^{0,0}\}$ & $\{\Phi_{1}^{0,2}, Q\Phi_{1}^{0,0}\}$ & $\{\Phi_{k}^{0,2},Q\Phi_{k}^{0,0},\Phi_{k-2}^{2,0}\}$ & $\{\Phi_{d-2}^{2,0},Q\Phi_{d}^{0,0}\}$ & $\{Q\Phi_{k-2}^{2,0}\}$\\[1ex]
$T_{2,k}^\ast$ & $\{Q\Phi_{0}^{0,0}\}$ & $\{\Phi_{1}^{0,2},Q\Phi_{1}^{0,0}\}$ & $\{\Phi_{k}^{0,2},Q\Phi_{k}^{0,0}\}$ & $\{Q\Phi_{d}^{0,0}\}$ & -- \\[1ex]
  \bottomrule
  \end{tabular}
  \label{tabbasesgencase}
  \end{table}

\section{Asymptotics for estimators: a  theoretical foundation}
\label{sec:asymptotics}

In the current section, we first recall and illustrate a local Steiner formula for general compact sets together with the required concepts from geometric measure theory. For special classes of sets, we explain how the reach measures involved are related to the better known support measures. The main results of this section are Theorems \ref{Thm1} and \ref{thm:unions}. They relate differentials of local volume integrals to surface integrals for general classes of compact sets, including finite unions of sets with positive reach. These results provide the foundation for the Voronoi-FD algorithm introduced in Section \ref{sec:Voronoi-FD}.

Let $A\subset\R^d$ be a nonempty closed set.  The topological boundary of $A$ is denoted by $\partial A$.  For $z\in\R^d$, the distance of $z$ from $A$ is denoted by 
$\text{dist}(A,z):= \min\{\|a-z\|: a\in A\} $. The $r$-neighborhood of $A$, for $r\ge 0$, is the set
$$
A^r:=\{y\in \R^d:\text{dist}(A,y)\le r\}.
$$
The (positive) normal bundle of $A$ is the Borel subset of $\R^d\times \mathbb{S}^{d-1}$ that is defined by
$$
\Nor(A):=\{(x,u)\in\partial A\times \mathbb{S}^{d-1}:{\rm dist}(A,x+ru)=r\text{ for some }r>0\},
$$
and the (Borel measurable) reach function of $A$ is given by
$$
r_A(x,u):=\sup\{s>0:{\rm dist}(A,x+su)=s\},\quad (x,u)\in \Nor(A)
$$
(see \cite{HS22} and note the slight correction in comparison with \cite{HLW04}). For the preceding assertions regarding measurability, see the references in \cite[Section 2.4]{HS22}.

\begin{figure}[t]
  \captionsetup{ labelfont = {bf}, format = plain }
  \centering
  \includegraphics[height=0.48\linewidth]{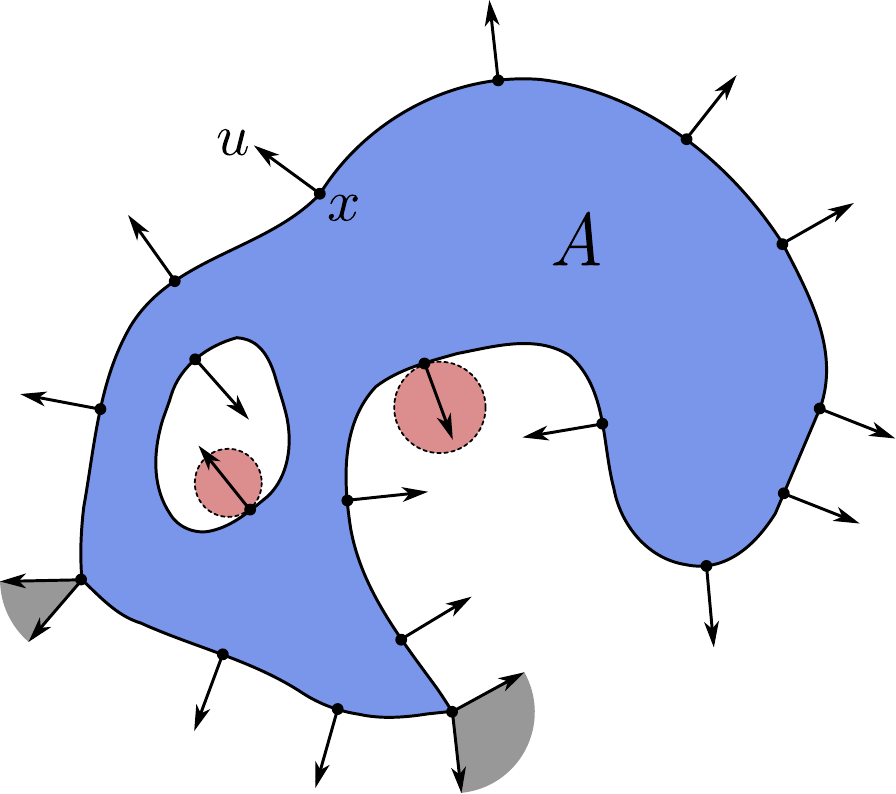}
  \caption{Set $A$ with positive reach and various support elements  $(x,u)\in \Nor(A)$.}
  \label{fig:posreachandsupportelements}
\end{figure}

The reach of $A$, denoted by $\reach(A)$, is defined as the supremum of all $r\ge 0$ such that each $z\in A^r$ has a unique nearest point in $A$. We say that $A$ has positive reach, if $\reach(A)>0$. It can be shown that $\reach(A)\ge r$, for some $r>0$, if $r_A(x,u)\ge r$ for $\mathcal{H}^{d-1}$-almost all $(x,u)\in \Nor(A)$; see \cite[Lemma 3.19]{HS22} for the proof of a more general fact. An illustration of elements of the  normal bundle of a set with positive reach is given in Fig.~\ref{fig:posreachandsupportelements}, Fig.~\ref{fig:reachfunction1} illustrates the reach function for a polygonal arc which is a polyconvex set (but not a set with positive reach). Recall that $\mathcal{U}^d$ denotes the class of all finite unions of compact sets with positive reach all of whose finite intersections have again positive reach. The polygonal arc shown in Fig.~\ref{fig:reachfunction1} lies in $\mathcal{U}^d$.

It was shown in \cite[Theorem 2.1]{HLW04}, \cite[Theorem 3.16]{HS22} that for a general closed set $A\subset\R^d$ there exist uniquely determined signed ``reach measures'' $\mu_i(A;\cdot)$  on the Borel subsets of $\R^d\times\mathbb{S}^{d-1}$, for $i\in \{0,\ldots,d-1\}$,  concentrated on $\Nor(A)$ and such that 
\begin{align}\label{eq1}
&\int_{\R^d\setminus A}f(z)\, \mathcal{H}^d(\md z)\notag
\\
&\qquad 
=\sum_{i=0}^{d-1}\omega_{d-i}\int_0^\infty\int_{\Nor(A)} t^{d-1-i}\1\{r_A(x,u)>t\}f(x+tu)\, \mu_i(A;\md(x,u))\, \md t,
\end{align}
where $f:\R^d\to\R$ is an arbitrary measurable bounded function with compact support. Clearly, \eqref{eq1} remains true if $f$ takes values in the (finite-dimensional) vector space $\mathbb{T
}^r(\R^d)$, for any fixed $r\in\N_0$, where  $f:\R^d\to \mathbb{T
}^r(\R^d)$ is said to be bounded if the map $x\mapsto |f(x)|$ is bounded ($f$ is bounded on a subset $B$ if the restriction of this map to $B$ is bounded). It is shown in \cite[(2.27)]{HLW04} that on the right side of \eqref{eq1} the order of integration can be interchanged. Hence, for $(x,u)\in\Nor(A)$ the indicator $\1\{r_A(x,u)>t\}$ restricts the domain of integration for $t\ge 0$ so that the triples $(x,u,t)$ uniquely parametrize almost all points of $\R^d$ via $(x,u,t)\mapsto x+tu\in \R^d$.  

\begin{remark}\label{rem:sec3null}
If $A\neq\emptyset$ is a compact convex set, then $r_A(x,u)=\infty$ for $(x,u)\in \Nor(A)$ and \eqref{eq1} boils down to the local Steiner formula for convex bodies. If $\text{reach}(A)>r$ and the function $f$ is supported in $A^r$, then the indicator function in \eqref{eq1} can be omitted, and again we get a local Steiner formula in the $r$-neighborhood $A^r$ of $A$.    
\end{remark}

\begin{figure}[t]
  \captionsetup{ labelfont = {bf}, format = plain }
  \centering
  \includegraphics[height=0.3\linewidth]{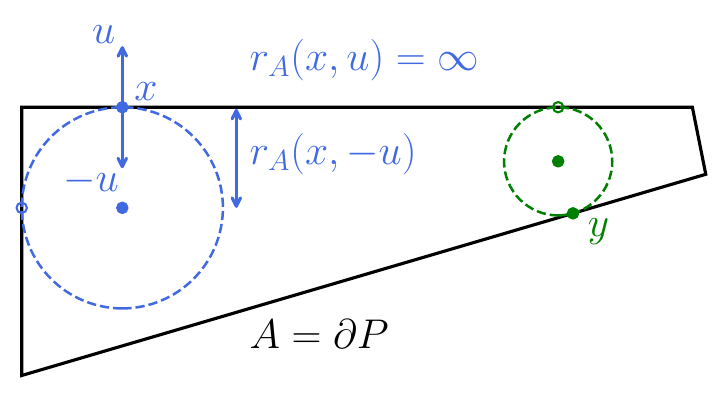}
  \caption{Boundary $A=\partial P$ of a polytope $P$ and particular values of the reach function with touching balls from outside.}
  \label{fig:reachfunction1}
\end{figure}

\begin{remark}\label{rem:sec3} {\rm 
If $A\in\mathcal{U}^d$, then the support measures $\Lambda_j(A,\cdot)$, $j\in\{0,\ldots,d-1\}$, of $A$ are defined by additive extension. For this generalization, it is crucial that $A=A_1\cup\ldots\cup A_m$ can be obtained as a finite union of compact sets $A_l$ with positive reach, $l\in\{1,\ldots,m\}$, such that arbitrary finite intersections of $A_1,\ldots,A_m$ have positive reach. In general, the intersection of two sets of positive reach does not have positive reach. A basic example is provided in \cite[Example 4.16]{RZ2019}, an even more striking example is constructed in \cite[Section 5, Example 1]{ACV2008}, another example is displayed in Fig.~\ref{fig:reachthree3}. The set $A$ is the epigraph of the function $t\mapsto f\left(\frac{\pi}{16}-|t|\right)$, $t\in [-\frac{\pi}{16},\frac{\pi}{16}]$, where $f(t):= t^6\cdot\sin^2\left(\frac{1}{t}\right)$, for $t\neq 0$, and $f(0):=0$. 

\begin{remark}\label{rem:sec3c} {\rm 
For a generic (in the sense of Baire categories \cite[Section 2.6]{S14}) convex body $K\subset\R^d$ (even for $d=2$) the boundary $A=\partial K$ of $K$ is strictly convex, of class $C^1$, for $\mathcal{H}^{d-1}$-a.e.~$x\in A$, $N(A,x):=\{u\in\mathbb{S}^{d-1}:(x,u)\in \Nor(A)\}$ consists of two antipodal unit vectors with $r_A(x,u)=\infty$ and $r_A(x,-u)\in (0,\infty)$ (say) and the set of points $x\in A$ where $r_A(x,-u)=0$ may be dense in $A$ (see \cite{McMullen1974} and Corollary 2.7.2, Theorem 2.7.4 and Note 2 for Section 2.7 in \cite{S14}). Clearly, $A$ is not a finite union of sets with positive reach.}
\end{remark}

\begin{figure}[t]
  \captionsetup{ labelfont = {bf}, format = plain }
  \centering
  \includegraphics[height=0.48\linewidth]{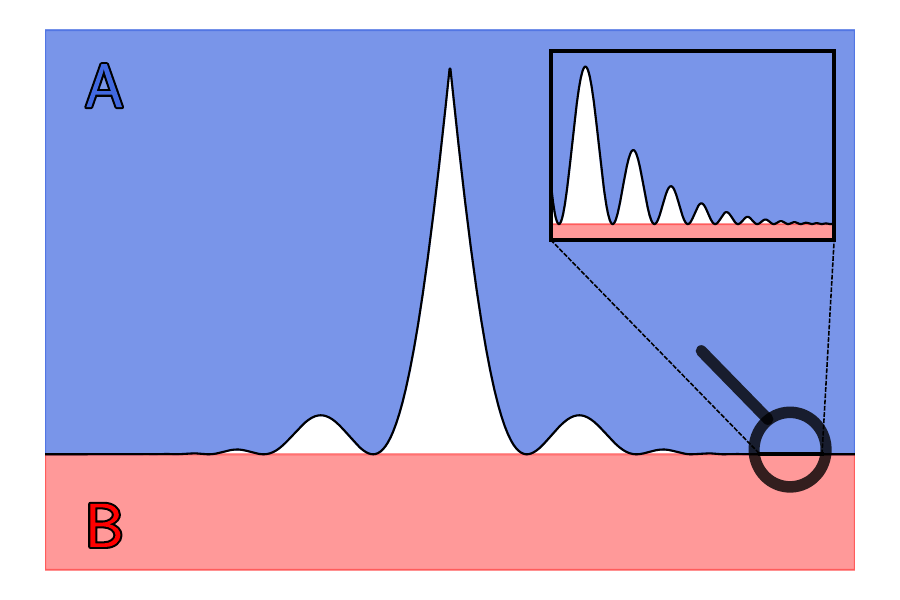}
  \caption{The sets $A, B$ have positive reach, but neither $A \cap B$ nor $A\cup B$ has positive reach.}
  \label{fig:reachthree3}
\end{figure}

 It was shown in \cite[Section 3]{HLW04} that if $A\in \mathcal{U}^d$, then 
$$
\mu_j(A;\cdot)=\Lambda_j(A,\Nor(A)\cap\cdot),\quad \text{for } j=0,\ldots,d-1.
$$
For $A\in \mathcal{U}^d$, the additive extension $\Lambda_j(A, \cdot)$ of the support measures is concentrated on $N^*(A)$, another notion of normal bundle that contains $\Nor(A)$ as a subset and is defined by means of an index function.  
The arguments in \cite[Sections 3 and 4]{HLW04} show that indeed 
\begin{equation}\label{eq:3consistent}
\mu_{d-1}(A;\cdot)=\Lambda_{d-1}(A,\cdot)
\end{equation}
as Borel measures on $\R^d\times\mathbb{S}^{d-1}$; 
see also \cite[Corollary 2]{Rat05}. In particular, $A\mapsto \mu_{d-1}(A;\cdot)$ is additive on $\mathcal{U}^d$. On the other hand, the support measures are in general not defined on the larger domain of finite unions of sets with positive reach. For this reason, we work with the reach measures whenever the support measures are not available. The relation \eqref{eq:3consistent} shows that this approach is consistent with the previous literature if we restrict ourselves to the top-order measures with index $j=d-1$, as in the definition \eqref{eq:4consistent} below.}
\end{remark}

The integrals on the right-hand side of \eqref{eq1} are well defined, but in general the reach measures $\mu_i(A;\cdot)$ are not Radon measures. In the following, we consider compact sets $A\subset\R^d$ and assume that the (nonnegative) total variation measure $\|\mu_i(A)\|(\cdot)$ of $\mu_i(A;\cdot)$ is finite for $i\in\{0,\ldots,d-1\}$. 
Sufficient conditions are provided in Lemma~\ref{Lem:3.1}. 

\begin{lemma}\label{Lem:3.1}
    If $A=\bigcup_{l=1}^mA_l$ with compact sets $A_l\subset\R^d$ such that $\mathcal{H}^{d-1}(\Nor(A_l))<\infty$, then the total variation measures of the reach measures of $A$ are concentrated on $\Nor(A)$ and satisfy 
    \begin{equation}\label{eq2}
    \|\mu_i(A)\|(\R^d\times\mathbb{S}^{d-1})<\infty\quad \text{ for }i\in\{0,\ldots,d-1\}.
    \end{equation}
    In particular, \eqref{eq2} holds if $A$ is a finite union of compact sets of positive reach.
\end{lemma}

\begin{proof}
Recall that $\mu_i(A;\cdot)$ is concentrated on $\Nor(A)$ which implies the first assertion of the lemma. 

    It follows from \cite[Corollary 2.5]{HLW04} that 
    $$
    \|\mu_i(A)\|(\Nor(A))\le c(d,i)\int_{\Nor(A)}|H_{d-1-i}(A,x,u)|\, \mathcal{H}^{d-1}(\md(x,u)),
    $$
    where $c(d,i)$ is a finite constant and 
    $|H_{d-1-i}(A,x,u)|\le \binom{d-1}{i}$, 
    as can be seen from \cite[(2.13)]{HLW04}. Let $A=\bigcup_{l=1}^mA_l$ with compact sets $A_l\subset\R^d$ such that $\mathcal{H}^{d-1}(\Nor(A_l))<\infty$. Since 
    $$
    \Nor(A)\subset \bigcup_{l=1}^m \Nor(A_l),
    $$
    the second assertion follows. 

    If $A_l$ is a compact set with positive reach, then 
    $\Nor(A_l)$ is bilipschitz homeomorphic to the compact $(d-1)$-dimensional submanifold ${\rm dist}(A_l,\cdot)^{-1}(\{r_0\})$ of $\R^d$, if $r_0<\reach(A_l)$; see also Remark 4.4, Lemma 4.21 and Corollary 4.22 in \cite{RZ2019}. Hence $\mathcal{H}^{d-1}(\Nor(A_l))<\infty$, which implies the remaining assertion.
\end{proof}

\begin{remark}\label{rem:erg1}
    If $A\neq\emptyset$ is compact and $\varepsilon>0$, then $A^\varepsilon$ satisfies condition \eqref{eq2}; see \cite[Corollary 4.4]{HLW04}. 
\end{remark}

Let $A\subset\R^d$ be a nonempty closed set. For $\mathcal{H}^d$-almost all $z\in\R^d\setminus A$, the metric projection $p_A(z)\in A$ of $z$ to $A$ and the vector $u_A(z)\in\mathbb{S}^{d-1}$ are 
uniquely determined (see \cite[(2.1)]{HLW04}  and the references given there) by 
$$
\|p_A(z)-z\|={\rm dist}(A,z), \quad u_A(z)=\frac{z-p_A(z)}{{\rm dist}(A,z)}.
$$
Note that $p_A(z)=z$ for $z\in A$ (whereas $u_A(z)$ is defined only for $z\notin A$), and if $(x,u)\in\Nor(A)$ and $0<s<r_A(x,u)$, then $p_A(x+su)=x$ and $u_A(x+su)=u$. 
For a nonempty compact set $A\subset\R^d$, we consider
$$
\partial^+A:=\{x\in\partial A: (x,u)\in \Nor(A)\text{ for some }u\in\mathbb{S}^{d-1}\},
$$
and for $x\in\partial^+A$ we define the set of  unit normal vectors of $A$ at $x$ by
$$
N(A,x):=\{u\in\mathbb{S}^{d-1}:(x,u)\in \Nor(A)\}.
$$
Note that the Borel set $N(A,x)$ is either a singleton, consists of two unit vectors $\{\pm u\}$  or is an infinite spherically convex set. We set
$$
\partial^{i}A:=\{x\in\partial^+A: |N(A,x)|=i\},\quad i\in\{1,2\},
$$
where $|N(A,x)|$ denotes the cardinality of the set $N(A,x)$. 
We write $\nu_A(x)$ for the unique unit vector in $N(A,x)$, if $x\in \partial^{1} A$, and $\pm \nu_A(x)$ for the two unit vectors in $N(A,x)$, if $x\in \partial^{2} A$.

\begin{remark}\label{rem:neureferee}
The boundary $A=\partial P=\partial A$ of a polytope $P\subset\R^2$ is not a set with positive reach, but a finite union of (convex) segments. Note that $\partial^+A=\partial A$, $\partial^1 A$ is the set of vertices and $\partial^2 A$ is $\partial A$ minus the set of vertices. While here $A$ is still in the convex ring, this is no longer the case for the boundary of a general (planar, generic) convex body $K$, where still $\partial^2 K$ has full measure in $\partial K$ and $\partial^1 K$ can be a dense subset of $\partial K$ (see Remark \ref{rem:sec3c}). 

An example of a set with positive reach that is the closure of its interior (but not a topological manifold) is provided in \cite[Section 5, Example 1]{ACV2008} (see the set between $A$ and $B$ in Fig.~\ref{fig:reachthree3} for a simple illustration and compare with Remark \ref{rem:sec3c}). Clearly, these examples show the relevance of the sets $\partial^1 A$ and $\partial ^2 A$. 
\end{remark}

The subsets $\partial^1 A$ and $\partial ^2 A$ of $\partial A$ are crucial for the description of the limit in the following theorem. If $s$ is odd, the integral over $\partial^2 A$ does not contribute, since $1+(-1)^s=0$. If $s$ is even, the integral is multiplied by $2$, which corresponds to the two unit normal vectors $\pm\nu_A(x)$ at boundary points $x\in\partial^2 A$.  

\begin{theorem}\label{Thm1}
    Let $r,s\in\mathbb{N}_0$. Let $A\subset\R^d$ be a nonempty compact set such that \eqref{eq2} holds. If $f:\R^d\to\mathbb{T}^r(\R^d)$ is measurable and bounded on $\partial A$, then 
    \begin{align*}
    &\lim_{\varepsilon\to 0_+}\frac{1}{\varepsilon}\int_{A^\varepsilon\setminus A}f(p_A(z))u_A(z)^s\, \mathcal{H}^d(\md z)\\
    &=
    \int_{\partial^{1}A}f(x)\nu_A(x)^s\, \mathcal{H}^{d-1}(\md x)+\left(1+(-1)^s\right)\int_{\partial^{2}A}f(x)\nu_A(x)^s\, \mathcal{H}^{d-1}(\md x).
    \end{align*}  
In particular, the assertion holds for finite unions of compact sets with positive reach.
\end{theorem}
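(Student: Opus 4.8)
The plan is to feed the general Steiner-type formula \eqref{eq1} the vector-valued, measurable, compactly supported integrand $z\mapsto \1\{z\in A^\varepsilon\}\,f(p_A(z))\,u_A(z)^s$, and then to let $\varepsilon\to 0_+$ and track which order of reach measure survives the rescaling by $1/\varepsilon$. This integrand is admissible: it is supported on the compact set $A^\varepsilon$, and since the metric projection $p_A(z)$ of a point $z\notin A$ lies on $\partial A$, the factor $f(p_A(z))$ only ever evaluates $f$ on $\partial A$, where $f$ is bounded; together with $|u_A(z)^s|\le 1$ (by \eqref{eq:neua1}) this makes it bounded. The elementary point is that for $(x,u)\in\Nor(A)$ and $0<t<r_A(x,u)$ one has $p_A(x+tu)=x$, $u_A(x+tu)=u$ and $\mathrm{dist}(A,x+tu)=t$, so that $x+tu\in A^\varepsilon$ iff $t\le\varepsilon$. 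Hence the chosen integrand, evaluated at $x+tu$, equals $\1\{t\le\varepsilon\}\,f(x)\,u^s$, and carrying out the $t$-integration in \eqref{eq1} yields
\begin{equation*}
\int_{A^\varepsilon\setminus A}f(p_A(z))u_A(z)^s\,\mathcal{H}^d(\md z)
=\sum_{i=0}^{d-1}\frac{\omega_{d-i}}{d-i}\int_{\Nor(A)}\bigl(\varepsilon\wedge r_A(x,u)\bigr)^{d-i}f(x)u^s\,\mu_i(A;\md(x,u)).
\end{equation*}

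Next I would divide by $\varepsilon$ and pass to the limit term by term. For $i=d-1$ the factor $\varepsilon^{-1}(\varepsilon\wedge r_A(x,u))$ is bounded by $1$ and tends to $1$ pointwise on $\Nor(A)$, because $r_A(x,u)>0$ there; for $0\le i\le d-2$ the corresponding factor is $O(\varepsilon^{d-1-i})$, uniformly bounded for $\varepsilon\le 1$ and tending to $0$. Since $|f(x)u^s|\le\sup_{\partial A}|f|<\infty$ and, by \eqref{eq2}, the total variation measures $\|\mu_i(A)\|$ are finite, dominated convergence applies to each summand; only the $i=d-1$ term survives, and with $\omega_1=2$ one obtains
\begin{equation*}
\lim_{\varepsilon\to 0_+}\frac{1}{\varepsilon}\int_{A^\varepsilon\setminus A}f(p_A(z))u_A(z)^s\,\mathcal{H}^d(\md z)
=2\int_{\Nor(A)}f(x)u^s\,\mu_{d-1}(A;\md(x,u)).
\end{equation*}

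It then remains to rewrite the top-order reach measure as a boundary integral. Here I would invoke the known description of $\mu_{d-1}(A;\cdot)$ (generalizing the convex-body identity, in which $\Lambda_{d-1}(K,\cdot)$ is $\tfrac12$ times the boundary surface measure pushed forward by the Gauss map; cf.\ \cite{HLW04,HS22}): for a compact set $A$ with \eqref{eq2} one has $\mathcal{H}^{d-1}\bigl(\partial^+A\setminus(\partial^1A\cup\partial^2A)\bigr)=0$ and
\begin{equation*}
\mu_{d-1}(A;\cdot)=\tfrac12\int_{\partial^1A}\1\{(x,\nu_A(x))\in\cdot\}\,\mathcal{H}^{d-1}(\md x)
+\tfrac12\int_{\partial^2A}\bigl(\1\{(x,\nu_A(x))\in\cdot\}+\1\{(x,-\nu_A(x))\in\cdot\}\bigr)\mathcal{H}^{d-1}(\md x).
\end{equation*}
Substituting this into the previous display and using $\nu_A(x)^s+(-\nu_A(x))^s=(1+(-1)^s)\,\nu_A(x)^s$ gives exactly the asserted right-hand side. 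Finally, a finite union of compact sets of positive reach satisfies \eqref{eq2} by Lemma~\ref{Lem:3.1}, so the closing statement of the theorem is a special case.

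I expect the main obstacle to be precisely this last identification of $\mu_{d-1}(A;\cdot)$ in the required generality: one must know that $\mathcal{H}^{d-1}$-almost every point of $\partial^+A$ carries exactly one or two unit normals, that the $(d-1)$-st curvature measure has boundary density $\tfrac12$ on $\partial^1A$ and $1$ on $\partial^2A$, and that on a two-normal fibre the disintegrated measure is the symmetric one $\tfrac12(\delta_{\nu_A(x)}+\delta_{-\nu_A(x)})$. All of this is classical for convex bodies and available for sets of positive reach; the point is to have it for finite unions of such sets, equivalently under hypothesis \eqref{eq2}, which is what the reach-measure theory of \cite{HLW04,HS22} supplies. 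Everything else — the substitution into \eqref{eq1}, the $\varepsilon$-rescaling, and the dominated-convergence argument powered by \eqref{eq2} — is routine.
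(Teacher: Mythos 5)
Your argument is correct and follows the same route as the paper: feed the test function $z\mapsto f(p_A(z))u_A(z)^s\1\{z\in A^\varepsilon\setminus A\}$ into the Steiner-type formula \eqref{eq1}, show by rescaling and dominated convergence (powered by \eqref{eq2}) that only the $i=d-1$ term survives, and then invoke the explicit description of $\mu_{d-1}(A;\cdot)$ from \cite[Proposition 4.1]{HLW04} (the paper's equation \eqref{eq:reachd-1measure}). The only cosmetic difference is that you carry out the $t$-integration first to get the $(\varepsilon\wedge r_A)^{d-i}/(d-i)$ factor, while the paper keeps the $t$- and measure-integrations separate via the auxiliary functions $h_i(t)$; both are equally valid.
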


\begin{proof}
We apply \eqref{eq1} with the map $\tilde{f}:\R^d\to\mathbb{T}^{r+s}$, $\tilde{f}(z)=f(p_A(z))u_A(z)^s\1\{z\in A^\varepsilon\setminus A\}$ (with the understanding that $\tilde{f}(z)=0$ if $z\in A$), which yields
\begin{align*}
&\int_{A^{\varepsilon}\setminus A} f(p_A(z))
u_A(z)^s \, \mathcal{H}^d(\md z)\\
&=\sum_{i=0}^{d-1}\omega_{d-i}\int_0^\varepsilon \int_{\Nor(A)}t^{d-1-i}f(x)u^s \1\{r_A(x,u)>t\}\, \mu_i(A;\md(x,u))\, \md t.
\end{align*}
We consider
$$
h_i(t):=\omega_{d-i}t^{d-1-i}\int_{\Nor(A)}f(x)u^s\1\{r_A(x,u)>t\}\, \mu_i(A;\md (x,u)).
$$
If $i\in\{0,\ldots,d-2\}$, then
\begin{align}
   \varepsilon^{-1} \left|\int_0^\varepsilon h_i(t)\, \md t\right|
    &\le \frac{\omega_{d-i}}{\varepsilon}\int_0^\varepsilon t^{d-1-i}\int_{\Nor(A)}|f(x)|\1\{r_A(x,u)>t\}\, \|\mu_i(A)\|(\d(x,u))\, \d t\nonumber\\
    &\le \frac{\omega_{d-i}}{d-i}\cdot |f|_{\partial A}\cdot \|\mu_i(A)\|(\Nor(A))\cdot\varepsilon^{d-i-1}
    \to 0\quad \text{ as } \varepsilon \to 0_+,\label{eq:higherorder}
\end{align}
where $|f|_{\partial A}:=\sup\{|f(x)|:x\in\partial A\}<\infty$ and basic properties of the norm $|\cdot|$ on (symmetric) tensors were used, in particular \eqref{eq:neua1}. 
Moreover, by the dominated convergence theorem, 
\begin{align*}
    h_{d-1}(t)&=2\int_{\Nor(A)}f(x)u^s\1\{r_A(x,u)>t\}\, \mu_{d-1}(A;\md(x,u))\\
    &\to  2\int_{\Nor(A)}f(x)u^s \, \mu_{d-1}(A;\md(x,u))=:h_{d-1}(0_+)\quad \text{ as }t \to 0_+,
\end{align*}
since $r_A(x,u)>0$ for $(x,u)\in\Nor(A)$. 
Note that by \cite[Proposition 4.1]{HLW04}, $\mu_{d-1}(A;\cdot)$ is a nonnegative $\sigma$-finite Borel measure and 
\begin{align}\label{eq:reachd-1measure}
2\mu_{d-1}(A;\cdot)&=\int_{\partial^1 A}\1\{(x,\nu_A(x))\in\cdot\}\,\mathcal{H}^{d-1}(\md x)\\
&\quad + \int_{\partial^{2}A}\1\{(x,\nu_A(x))\in\cdot\}+
\1\{(x,-\nu_A(x))\in\cdot\}
 \, \mathcal{H}^{d-1}(\md x).\nonumber
\end{align}
Hence, if $t\to 0_+$, then 
\begin{align*}
    h_{d-1}(t) 
    &\to  \int_{\partial^{1}A}f(x)\nu_A(x)^s\, \mathcal{H}^{d-1}(\md x)+\left(1+(-1)^s\right)\int_{\partial^{2}A}f(x)\nu_A(x)^s\, \mathcal{H}^{d-1}(\md x).
\end{align*}
Since 
$$
\varepsilon^{-1}\int_0^\varepsilon h_{d-1}(t)\, \md t\to h_{d-1}(0_+)\quad \text{ as }\varepsilon \to 0_+,
$$
the assertion follows.
\end{proof}

\begin{remark}\label{rem:slowlimit}
Relation \eqref{eq:higherorder} in the proof of Theorem \ref{Thm1} shows that  
$$
  \varepsilon^{-1} \left|\int_0^\varepsilon h_i(t)\, \md t\right|\le C(d,A,f)\cdot \varepsilon, \quad i\in\{0,\ldots,d-2\},
$$
where $C(d,A,f)$ is a constant that depends only on the parameters in brackets. 
If $\reach(A)>0$, then $h_{d-1}(t)=h_{d-1}(0_+)$ for $0<t<\reach(A)$, hence in this case the deviation 
\begin{equation}\label{eq:deviation}\left|\frac{1}{\varepsilon}\int_{A^\varepsilon\setminus A}f(p_A(z))u_A(z)^s\, \mathcal{H}^d(\md z)-2\int_{\Nor(A)}f(x)u^s \, \mu_{d-1}(A;\md(x,u))\right|
\end{equation}
is at most of the order $\varepsilon$ as $\varepsilon\to 0_+$. 

If $A=B^2(o,1)\cup B^2(2\Delta e,1)$, for some fixed $\Delta\in (0,1)$ and $e\in\mathbb{S}^1$, then \eqref{eq:deviation} is still of the   order $\varepsilon$. However, if $\Delta=1$, then \eqref{eq:deviation} is of the order $\varepsilon^{1/2}$. Considering the epigraph of the function $x\mapsto \frac{1}{\beta}|x|^\beta$, $\beta>1$ fixed, and its reflection across the $e_1$-axis, one can construct a set $A_\beta$ for which the deviation \eqref{eq:deviation} is of the order $\varepsilon^{1/\beta}$. 
\end{remark}

The limit in Theorem \ref{Thm1} involves only integrations over (parts of) the boundary of the compact set $A$, which corresponds to the contribution of the top order support measure $\mu_{d-1}(A;\cdot)$, which can be thought of as the surface area measure of $A$. However, in the proof we have to control the contributions of all support measures $\mu_i(A;\cdot)$, $i\in \{0,\ldots,d-2\}$. 
By essentially the same argument, we  obtain the following variant of Theorem~\ref{Thm1}. The case $s=0$ of the next theorem is already covered by Theorem~\ref{Thm1} with the domain of integration restricted to the complement of the set $A$.

\begin{theorem}\label{thm:unions}
     Let $s\in\mathbb{N}$. Let $A\subset\R^d$ be a nonemtpy compact set such that \eqref{eq2} holds. If $f:\R^d\to\mathbb{T}^r(\R^d)$ is measurable and bounded on $\partial A$, then 
    \begin{align}\label{eq:Theorem3.3}
    &\lim_{\varepsilon\to 0_+}\frac{1+s}{\varepsilon^{1+s}}\int_{A^\varepsilon }f(p_A(z))(z-p_A(z))^s\, \mathcal{H}^d(\md z)\\
    &=
    \int_{\partial^{1}A}f(x)\nu_A(x)^s\, \mathcal{H}^{d-1}(\md x)+\left(1+(-1)^s\right)\int_{\partial^{2}A}f(x)\nu_A(x)^s\, \mathcal{H}^{d-1}(\md x).\nonumber
    \end{align}  
In particular, the assertion holds for finite unions of compact sets with positive reach.
\end{theorem}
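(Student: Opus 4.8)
The plan is to repeat the proof of Theorem~\ref{Thm1} almost verbatim, the only new ingredients being the extra factor $(z-p_A(z))^s$ and the adjusted normalization $(1+s)/\varepsilon^{1+s}$. First I would note that, because $s\ge 1$, the integrand $f(p_A(z))(z-p_A(z))^s$ vanishes on $A$, so the integral over $A^\varepsilon$ equals the one over $A^\varepsilon\setminus A$; on the latter set $p_A(z)\in\partial A$, hence $f\circ p_A$ is bounded by $|f|_{\partial A}:=\sup\{|f(x)|:x\in\partial A\}<\infty$, and $z-p_A(z)={\rm dist}(A,z)\,u_A(z)$ with ${\rm dist}(A,z)\le\varepsilon$. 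Thus $\tilde f(z):=f(p_A(z))(z-p_A(z))^s\1\{z\in A^\varepsilon\}$ is a bounded, compactly supported $\mathbb{T}^{r+s}$-valued map, and \eqref{eq1} applies to it. Since $p_A(x+tu)=x$, $u_A(x+tu)=u$ and ${\rm dist}(A,x+tu)=t$ whenever $(x,u)\in\Nor(A)$ and $0<t<r_A(x,u)$, \eqref{eq1} rewrites the left-hand side of \eqref{eq:Theorem3.3} as $\sum_{i=0}^{d-1}\int_0^\varepsilon\tilde h_i(t)\,\md t$ with
$$
\tilde h_i(t):=\omega_{d-i}\,t^{\,d-1-i+s}\int_{\Nor(A)}f(x)\,u^s\,\1\{r_A(x,u)>t\}\,\mu_i(A;\md(x,u))=t^s\,h_i(t),
$$
where $h_i$ is the function from the proof of Theorem~\ref{Thm1}.

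Next I would treat the two ranges of $i$ exactly as in Theorem~\ref{Thm1}. For $i\in\{0,\dots,d-2\}$, estimating $|f(x)u^s|\le|f(x)|\le|f|_{\partial A}$ (using \eqref{eq:neua1}) and invoking \eqref{eq2},
$$
\frac{1+s}{\varepsilon^{1+s}}\left|\int_0^\varepsilon\tilde h_i(t)\,\md t\right|\le\frac{(1+s)\,\omega_{d-i}}{d-i+s}\,|f|_{\partial A}\,\|\mu_i(A)\|(\Nor(A))\,\varepsilon^{\,d-i-1}\to0\quad\text{as }\varepsilon\to0_+,
$$
since $d-i-1\ge1$. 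For $i=d-1$, the proof of Theorem~\ref{Thm1} already shows, by dominated convergence (using that $\mu_{d-1}(A;\cdot)$ is a finite nonnegative measure and $r_A(x,u)>0$ on $\Nor(A)$), that $h_{d-1}(t)\to h_{d-1}(0_+)$ as $t\to0_+$ and that $h_{d-1}(0_+)$ equals the right-hand side of \eqref{eq:Theorem3.3}, by \eqref{eq:reachd-1measure} together with $(-\nu_A(x))^s=(-1)^s\nu_A(x)^s$. Since $\frac{1+s}{\varepsilon^{1+s}}\int_0^\varepsilon t^s\,\md t=1$ for all $\varepsilon>0$, the convergence $h_{d-1}(t)\to h_{d-1}(0_+)$ yields $\frac{1+s}{\varepsilon^{1+s}}\int_0^\varepsilon\tilde h_{d-1}(t)\,\md t=\frac{1+s}{\varepsilon^{1+s}}\int_0^\varepsilon t^s h_{d-1}(t)\,\md t\to h_{d-1}(0_+)$ (for $\delta>0$ pick $\varepsilon_0$ with $|h_{d-1}(t)-h_{d-1}(0_+)|<\delta$ for $t<\varepsilon_0$, then the difference from $h_{d-1}(0_+)$ is at most $\frac{1+s}{\varepsilon^{1+s}}\int_0^\varepsilon t^s\delta\,\md t=\delta$ for $\varepsilon<\varepsilon_0$). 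Combining the two ranges gives \eqref{eq:Theorem3.3}, and the last assertion follows from Lemma~\ref{Lem:3.1}, which ensures \eqref{eq2} for finite unions of compact sets of positive reach.

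As for the main obstacle: there is essentially none, since this is a bookkeeping variant of Theorem~\ref{Thm1}. The only two points requiring a little care are (a) recognizing that the factor $(z-p_A(z))^s$ annihilates the contribution of $A$ (so that the domain $A^\varepsilon$ in the statement is unproblematic even though $p_A$ is only defined $\mathcal{H}^d$-almost everywhere on $A^\varepsilon\setminus A$), and (b) verifying that replacing $\varepsilon^{-1}$ by $(1+s)\,\varepsilon^{-1-s}$ exactly compensates the shift of every exponent by $s$ — in particular that the lower-order terms still vanish with the unchanged rate $\varepsilon^{d-i-1}$ and that the normalization constant multiplying $h_{d-1}(0_+)$ comes out equal to $1$.
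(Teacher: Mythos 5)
Your proof is correct and is essentially identical to the paper's: both apply \eqref{eq1} to the map $\hat f(z)=f(p_A(z))(z-p_A(z))^s\1\{z\in A^\varepsilon\setminus A\}$, use $(x+tu)-p_A(x+tu)=tu$ to extract the extra factor $t^s$, discard the terms $i\le d-2$ by the estimate $\varepsilon^{d-i-1}\to 0$, and conclude via the elementary fact that $\varepsilon^{-(1+s)}\int_0^\varepsilon t^sg(t)\,\md t\to g(0_+)/(1+s)$ together with \eqref{eq:reachd-1measure}. The only cosmetic difference is that you spell out the $\delta$-argument for that last averaging limit, which the paper states as a one-line remark.
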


\begin{proof} First, note that $z-p_A(z)=o$ if $z\in A$ and $s\neq 0$. Hence the domain of integration on the left-hand side can be restricted to $A^\varepsilon\setminus A$ without changing the value of the integral. We apply \eqref{eq1} with the map $\hat{f}:\R^d\to\mathbb{T}^{r+s}$, $\hat{f}(z)=f(p_A(z))(z-p_A(z))^s\1\{z\in A^\varepsilon\setminus A\}$ and use that $(x+tu)-p_A(x+tu)=tu$ for $(x,u)\in\Nor(A)$ and $0<t<r_A(x,u)$. Thus we obtain
\begin{align*}
&\int_{A^{\varepsilon}} f(p_A(z))
(z-p_A(z))^s \, \mathcal{H}^d(\md z)\\
&=\sum_{i=0}^{d-1}\omega_{d-i}\int_0^\varepsilon \int_{\Nor(A)}t^{d-1-i+s}f(x)u^s \1\{r_A(x,u)>t\}\, \mu_i(A;\md(x,u))\, \md t.
\end{align*}
Note that if $g:[0,\infty)\to\R$ is measurable and bounded, and $g(t)\to g(0_+)$ as $t\to 0_+$, then 
$$
\varepsilon^{-(1+s)}\int_0^\varepsilon t^s g(t)\, \md t\to \frac{g(0_+)}{1+s} \quad\text{ as }\varepsilon\to 0_+.
$$
Otherwise, the proof follows the lines of the proof of Theorem~\ref{Thm1}.
\end{proof}

\section{Algorithms to estimate Minkowski tensors}
\label{sec:Algorithms}

Based on our theoretical results from the previous section, we now define
two explicit algorithms to estimate the Minkowski tensors of a set $K$ that is represented by a point cloud $K_0$, e.g., by a pixelated image.
For both algorithms, it is necessary to estimate the so-called
Voronoi tensors, which is the subject of Subsection
\ref{sec:VoronoiTensorEstimation}. In the next two Subsections \ref{sec:Voronoi-FD} and \ref{sec:Voronoi-LSQ}, we
introduce and describe the two algorithms in detail.
Finally, in Subsections~\ref{sec:implementation} and \ref{sec:choice},
we discuss important aspects of our open-source implementation
\cite{VorominkCode} and provide some advice on the choice
of parameters.

\subsection{Voronoi tensor estimation}\label{sec:VoronoiTensorEstimation}

\begin{figure}[t]
  \captionsetup{ labelfont = {bf}, format = plain }
  \centering
  \subfloat[][]{\includegraphics[height=0.48\linewidth,angle=270]{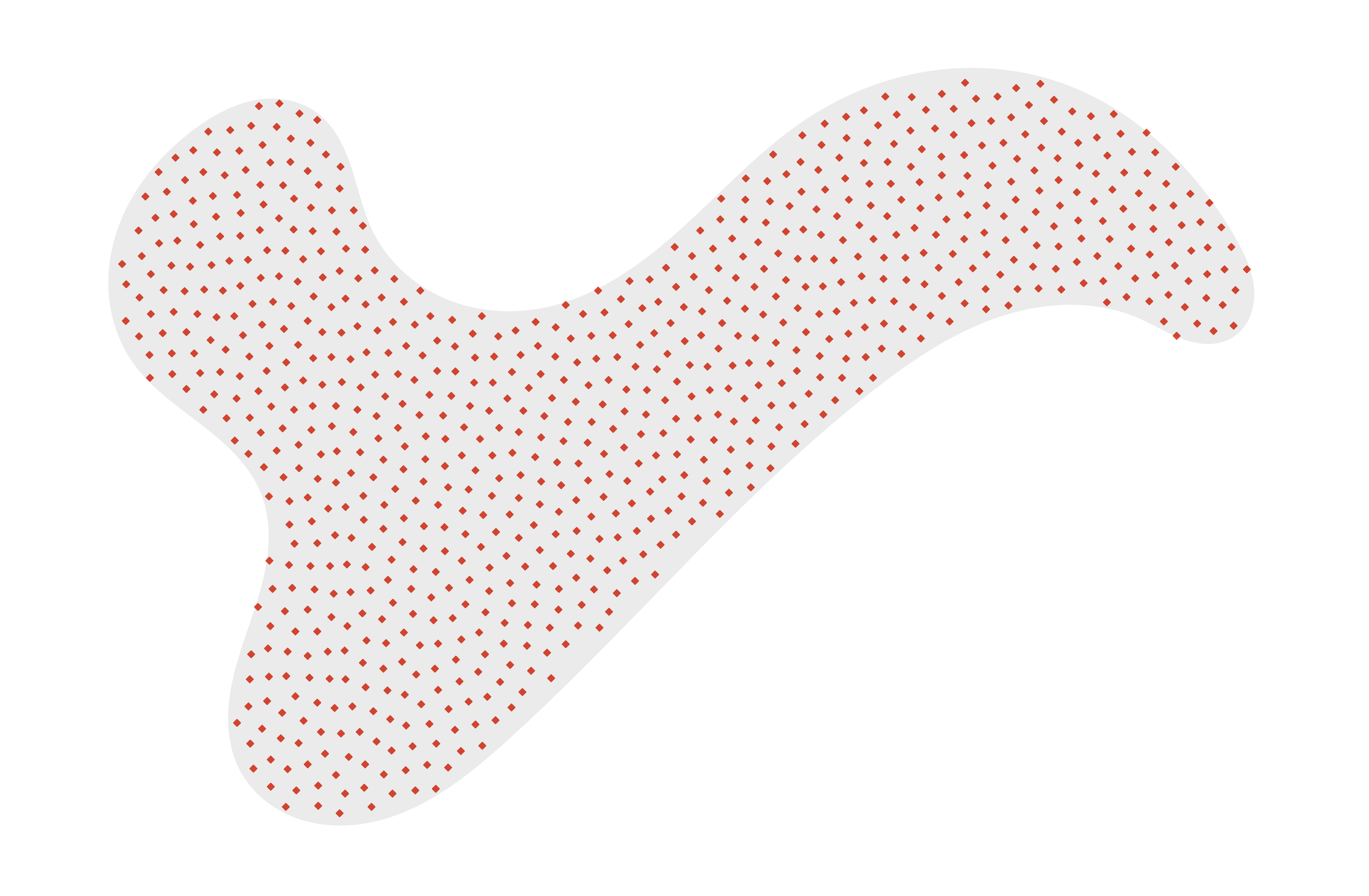}}
  \quad
  \subfloat[][]{\includegraphics[height=0.48\linewidth,angle=270]{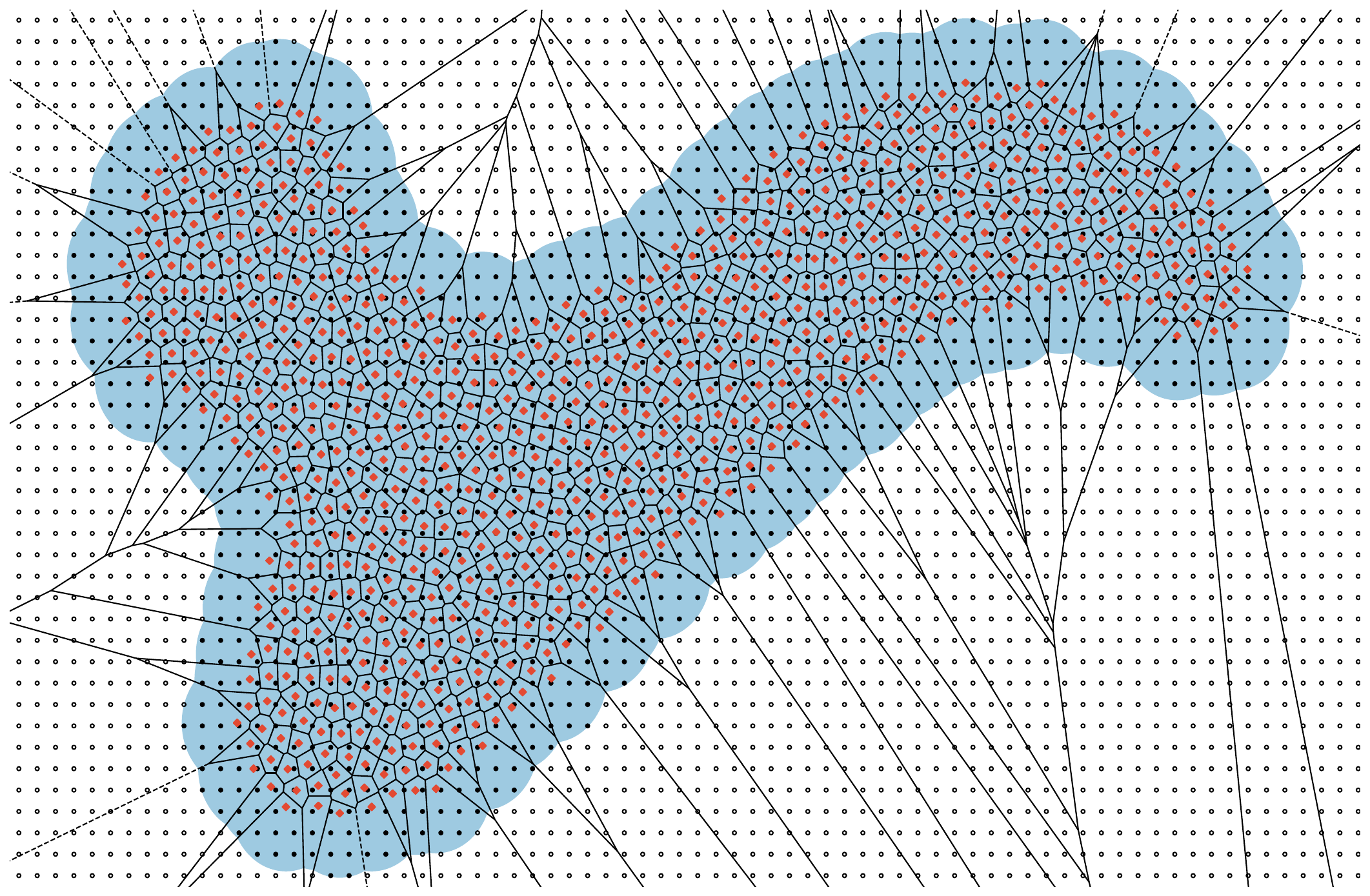}}
  \caption{Example data $K_0$ (red points), which is a finite subset of an underlying set with positive reach (a), and the Voronoi diagram (b) of the data $K_0$ and the set $K_0^R$ (blue) of points with distance not bigger than some $R>0$ from $K_0$.
  The algorithm uses a random grid $\eta$ (compare (\ref{VoroM_Estimator})) to estimate the Voronoi tensor $\mathcal{V}_R^{r,s}(K_0)$ of $K_0$, where only the points of $\eta$ inside $K_0^R$ are relevant for the estimation.}
  \label{fig:Algorithm}
\end{figure}

The Voronoi tensor $\mathcal{V}_R^{r,s}(K)$ with distance parameter $R\ge 0$ and rank parameter tuple $(r,s)\in\N_0^2$ of a nonempty compact set $K\subset\R^d$ is defined by
\begin{align} \label{VoroM}
\mathcal{V}_R^{r,s}(K) := \int_{K^R} p_K(x)^r (x-p_K(x))^s \,\mathcal{H}^d(\d x),
\end{align}
where $K^R$ is the set of points with distance at most $R>0$ from $K$
and $p_K$ is the (almost everywhere uniquely defined) metric projection
on $K$. The Voronoi tensor $\mathcal{V}_R^{r,s}(K)$ is the total Voronoi
tensor measure $\mathcal{V}_R^{r,s}(K;\R^d)$ introduced in
\cite{hug_voronoi-based_2017}. The present work is inspired by the
results in \cite{hug_voronoi-based_2017}, which in turn were motivated
and based on the contributions \cite{MR2594445,5669298}. Extensions to
$\delta$-Voronoi tensors, where $\delta$ is a distance-like function as
considered in \cite{CLMT15}, which have been introduced to improve the
robustness of estimation methods against outliers for the Voronoi
covariance measure, will not be pursued here. If $s>0$, then the
integral in \eqref{VoroM} of the symmetric tensor product of the
symmetric tensors $p_K(x)^r$ and $(x-p_K(x))^s$ can be restricted to
$K^R\setminus K$ without changing the value of the integral. For $r=s=0$
the Voronoi tensor with distance parameter $R$ is the volume of $K^R$.  
For a finite set $K_0\subset\R^d$ and $r,s\in\N_0$, the Voronoi tensors simplify to
\begin{align} \label{VoroM_finite}
\mathcal{V}_R^{r,s}(K_0) = \sum_{x\in K_0} x^r \int_{B(x,R)\cap V_x(K_0)} (y-x)^s \, \mathcal{H}^d(\d y),
\end{align}
where $B(x,R)$ is the closed ball with center $x$ and radius $R$ (we omit the upper index that indicates the dimension) and 
\begin{align*}
V_x(K_0) := \{ y\in\R^d : p_{K_0}(y)=x \}
\end{align*}
is the  Voronoi cell of $x\in K_0$ with respect to the set $K_0$. 
It is clear that each compact set $K\subset \R^d$ can be approximated with any specified precision by finite sets $K_0\subset\R^d$ with respect to the Hausdorff metric $d_H$ on compact subsets of $\R^d$ (see, e.g.,  \eqref{eq:apprdH}); we refer to \cite[p.~571]{schneider_stochastic_2008} for a definition of the Hausdorff metric.  If $K,K_0\subseteq B(o,\rho)$ for some $\rho>0$, then 
\begin{equation}\label{eq:appr1}
\left|\mathcal{V}_R^{r,s}(K)-\mathcal{V}_R^{r,s}(K_0)\right|\le C(d,R,\rho,r,s)\cdot d_H(K,K_0)^{\frac{1}{2}},
\end{equation}
which follows from \cite[Theorem 4.1]{hug_voronoi-based_2017}, where $C(d,R,\rho,r,s)$ is a constant depending only on the arguments in brackets. An inspection of the proof of Theorem 4.1 in \cite{hug_voronoi-based_2017} shows that the constant $C(d,R,\rho,r,s)$ can be chosen such that  $C(d,R,\rho,r,s)\le  C(d,R_0,\rho,r,s)$  if $0<R\le R_0$. Note that \eqref{eq:appr1} does not require that $K_0$ is a subset of $K$ (although an approximation of $K$ from inside can be guaranteed for a given $K$) and holds whenever $K,K_0$ are compact subsets of $B(o,\rho)$. Recall from \cite{hug_voronoi-based_2017} that $d_H(K,K_0)^{\frac{1}{2}}$ can be replaced by $d_H(K,K_0)$ if $r=s=0$.

Evaluating the integral in (\ref{VoroM_finite}) can be cumbersome and inefficient. Moreover, we do not 
require a precision beyond the approximation of $K$ via $K_0$. We, therefore, estimate the Voronoi tensors in (\ref{VoroM_finite}) by considering a random grid
\begin{align}\label{random_grid}
\eta=\sum_{z\in a\cdot\Z^d} \delta_{z+U_a}
\end{align}
with some scaling parameter $a>0$ and a uniform random vector $U_a\sim\mathcal{U}([0,a]^d)$. The tensor-valued random variable
\begin{align} \label{VoroM_Estimator}
\widehat{\mathcal{V}}_R^{r,s}(K_0,a) = a^d \sum_{x\in\eta} \1\big\{\|x-p_{K_0}(x)\|<R\big\} p_{K_0}(x)^r (x-p_{K_0}(x))^s
\end{align}
will then be used as an  estimator for ${\mathcal{V}}_R^{r,s}(K_0)$.
Since $K_0$ is a finite set, only finitely many summands do not vanish.
The estimated Voronoi tensor in (\ref{VoroM_Estimator}) can be viewed as a discrete version of the integral in (\ref{VoroM_finite}). 
Figure~\ref{fig:Algorithm} (a) provides an example data set $K_0$, which is given as a finite subset of an underlying set with positive reach.
In Figure~\ref{fig:Algorithm} (b) the idea of the estimator (\ref{VoroM_Estimator}) is demonstrated for this example data set $K_0$.
For each data point $x\in K_0$ one can see the Voronoi cell $V_x(K_0)$.

\begin{lemma}\label{Le4.1:unbiased}
Let $K_0\subset\R^d$ be a finite set and $a>0$. 
Then the estimator $\widehat{\mathcal{V}}_R^{r,s}(K_0,a)$ for ${\mathcal{V}}_R^{r,s}(K_0)$ is unbiased, that is, $\BE \widehat{\mathcal{V}}_R^{r,s}(K_0,a)={\mathcal{V}}_R^{r,s}(K_0)$.
\end{lemma}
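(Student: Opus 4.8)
The plan is to compute the expectation by integrating over the only source of randomness, the uniform shift $U_a\sim\mathcal{U}([0,a]^d)$, and then to recognize the resulting sum of integrals over translated cubes as a single integral over $\R^d$. Write the atoms of $\eta$ as $z+U_a$ with $z\in a\cdot\Z^d$, and abbreviate the $\mathbb{T}^{r+s}$-valued integrand by $g(w):=\1\{\|w-p_{K_0}(w)\|<R\}\,p_{K_0}(w)^r(w-p_{K_0}(w))^s$. For the finite set $K_0$ the metric projection $p_{K_0}(w)$ is uniquely determined for all $w$ outside a finite union of hyperplanes, hence $g$ is defined $\mathcal{H}^d$-almost everywhere; moreover $g$ is bounded (by \eqref{eq:neua1}, using $\|p_{K_0}(w)\|\le\max_{x\in K_0}\|x\|$ and $\|w-p_{K_0}(w)\|<R$ on the support of $g$) and vanishes off the bounded set $K_0^R$. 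In this notation $\widehat{\mathcal{V}}_R^{r,s}(K_0,a)=a^d\sum_{z\in a\cdot\Z^d}g(z+U_a)$.

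First I would note that this sum is effectively finite: if a cube $z+[0,a]^d$ is disjoint from $K_0^R$, then $g(z+v)=0$ for every $v\in[0,a]^d$, and only finitely many $z\in a\cdot\Z^d$ give cubes meeting the bounded set $K_0^R$. Hence $\widehat{\mathcal{V}}_R^{r,s}(K_0,a)$ is a finite sum of bounded random variables, and linearity of expectation gives $\BE\,\widehat{\mathcal{V}}_R^{r,s}(K_0,a)=a^d\sum_{z\in a\cdot\Z^d}\BE\,g(z+U_a)$ (alternatively one may invoke Fubini's theorem).

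Next I would unfold each term using the density $a^{-d}\1_{[0,a]^d}$ of $U_a$ and substitute $w=z+v$:
\[
a^d\,\BE\,g(z+U_a)=\int_{[0,a]^d}g(z+v)\,\mathcal{H}^d(\md v)=\int_{z+[0,a]^d}g(w)\,\mathcal{H}^d(\md w).
\]
Since the half-open cubes $z+[0,a)^d$, $z\in a\cdot\Z^d$, partition $\R^d$ and the boundary overlaps are $\mathcal{H}^d$-null, summing over $z$ yields $\BE\,\widehat{\mathcal{V}}_R^{r,s}(K_0,a)=\int_{\R^d}g(w)\,\mathcal{H}^d(\md w)$.

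Finally I would identify this with $\mathcal{V}_R^{r,s}(K_0)$. The indicator $\1\{\|w-p_{K_0}(w)\|<R\}$ agrees with $\1\{w\in K_0^R\}$ except on $\{w:\mathrm{dist}(w,K_0)=R\}$, which lies in a finite union of spheres and is therefore $\mathcal{H}^d$-null; hence $\int_{\R^d}g\,\md\mathcal{H}^d=\int_{K_0^R}p_{K_0}(w)^r(w-p_{K_0}(w))^s\,\mathcal{H}^d(\md w)=\mathcal{V}_R^{r,s}(K_0)$ by \eqref{VoroM}. No step presents a genuine obstacle: the argument is essentially the translation-invariance of Lebesgue measure, and the only points demanding a little care are the $\mathcal{H}^d$-a.e.\ definedness of $p_{K_0}$, the passage of $\BE$ through the sum, and the harmless difference between the strict and non-strict distance inequality.
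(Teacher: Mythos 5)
Your proof is correct and follows essentially the same route as the paper's: exchange expectation with the (effectively finite) sum, convert each expectation into an integral over the translated cube $z+[0,a]^d$, and telescope over the cube partition of $\R^d$. The only cosmetic difference is that you identify the resulting integral directly with the defining formula \eqref{VoroM}, whereas the paper first decomposes the integrand by Voronoi cells to land on the representation \eqref{VoroM_finite}; the two are equivalent, so this changes nothing of substance.
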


\begin{proof} 
Let $\{z_i:i\in\N\}=a\cdot \Z^d$ be an enumeration of the points in the lattice $a\cdot \Z^d$. Then
\begin{align*}
   \widehat{\mathcal{V}}_R^{r,s}(K_0,a) &=a^d\sum_{i\ge 1}
    \1\{\|z_i+U_a - p_{K_0}(z_i+U_a)\|<R\}p_{K_0}(z_i+U_a)^r(z_i+U_a-p_{K_0}(z_i+U_a))^s\\
    &=a^d\sum_{i\ge 1}\sum_{w\in K_0}
     \1\{p_{K_0}(z_i+U_a)=w\}\1\{\|z_i+U_a-w\|<R\}w^r(z_i+U_a-w)^s.
\end{align*}
Note that $p_{K_0}(z_i+U_a)$ is uniquely defined $\BP$-almost surely, since $U_a$ has a density with respect to the Lebesgue measure. 

If the indicator functions are nonzero, then for every $i\in\N$ there is some $w\in K_0$ such that $\|z_i+U_a-w\|\le R$, and hence $z_i\in K_0+B(o,\sqrt{d} a+R)$. In particular, this shows that the effective ranges of the summations over $i\ge 1$ and $w\in K_0$ are finite and can be chosen independently of $U_a$. Moreover, \eqref{eq:neua1} yields
$$
|w^r(z_i+U_a-w)^s |\le 
\|w\|^r\cdot \|z_i+U_a-w\|^s\le C(K_0)^rR^s<\infty,
$$
where $C(K_0)$ is a constant that depends only on $K_0$, but not on a specific $w\in K_0$. 

These remarks allow us to interchange summation and expectation so that
\begin{align*}
    \BE \widehat{\mathcal{V}}_R^{r,s}(K_0,a) 
    &=a^d\sum_{i\ge 1}\sum_{w\in K_0}
    \BE \1\{p_{K_0}(z_i+U_a)=w\}\1\{\|z_i+U_a-w\|<R\}w^r(z_i+U_a-w)^s\\
    &=\sum_{w\in K_0}\sum_{i\ge 1}\int_{z_i+[0,a]^d}\1\{p_{K_0}(y)=w\}
    \1\{\|y-w\|<R\}w^r(y-w)^s\,  \mathcal{H}^d(\md y)\\
    &=\sum_{w\in K_0} \int_{\R^d}\1\{y\in V_w(K_0)\}
    \1\{\|y-w\|<R\}w^r(y-w)^s\, \mathcal{H}^d(\md y)\\
    &=\sum_{w\in K_0}w^r\int_{\R^d}\1\{y\in V_w(K_0)\cap B(w,R)\}
     (y-w)^s\,\mathcal{H}^d(\md y)
     =\mathcal{V}_R^{r,s}(K_0) ,
\end{align*}
which proves the assertion.
\end{proof}

Next we show that  $\widehat{\mathcal{V}}_ {R}^{r,s}(K_0,a)$ is a consistent estimator  for $\mathcal{V}_ {R}^{r,s}(K)$,  as $K_0\to K$ in the Hausdorff metric  and  $a=a(K_0)\to 0$. More precisely, we obtain the following quantitative estimate. We write 
$$\Nsf(K_0)=\min\{\|x_1-x_2\|:x_1,x_2\in K_0, x_1\neq x_2\}$$ 
for the minimal distance of any two different points in a finite set $K_0\subset\R^d$ of cardinality at least two (which we tacitly assume  all the time) and $s\wedge t$ for the minimum of $s,t\in\R$.

\begin{theorem}\label{Le4.2:convergence}
Let $K\subset\R^d$ be a compact set, let $r,s\in \N_0$ and $R>0$. Let $\rho>0$ be such that $K\subseteq B(o,\rho)$. If $K_0\subseteq B(o,\rho)$ is a finite set and $a>0 $, then almost surely 
$$
|\widehat{\mathcal{V}}_R^{r,s}(K_0,a)-\mathcal{V}_ {R}^{r,s}(K)|
    \le C'(
d,R,\rho,r,s)  \left(d_H(K,K_0)^{\frac{1}{2}}+\max\left\{\frac{a}{\Nsf(K_0)},\left(\frac{a}{ \Nsf(K_0)}\right)^d\right\}\right), 
$$
where $C'(d,R,\rho,r,s)$ is a positive constant, and $d_H(K,K_0)^{\frac{1}{2}}$ can be replaced by $d_H(K,K_0)$ if $r=s=0$.

In particular, if  $(K_0(i))_{i\in\N}$ is a sequence of finite sets in $\R^d$ and $(a_i)_{i\in\N}$ is a sequence of positive numbers such that $K_0(i)\to K$ in the Hausdorff metric and  $\Nsf(K_0(i))^{-1}a_i\to 0$, as $i\to\infty$, then  $\widehat{\mathcal{V}}_ {R}^{r,s}(K_0(i),a_i)\to  \mathcal{V}_ {R}^{r,s}(K)$, as $i\to\infty$, almost surely.
\end{theorem}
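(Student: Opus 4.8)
The plan is to start from the triangle inequality
\[
\bigl|\widehat{\mathcal{V}}_R^{r,s}(K_0,a)-\mathcal{V}_R^{r,s}(K)\bigr|\le\bigl|\widehat{\mathcal{V}}_R^{r,s}(K_0,a)-\mathcal{V}_R^{r,s}(K_0)\bigr|+\bigl|\mathcal{V}_R^{r,s}(K_0)-\mathcal{V}_R^{r,s}(K)\bigr|,
\]
and to bound the second summand directly by \eqref{eq:appr1}, which already produces the contribution $C(d,R,\rho,r,s)\,d_H(K,K_0)^{1/2}$ (with exponent $1$ if $r=s=0$, by the remark following \eqref{eq:appr1}). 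Everything thus reduces to the discretization error $|\widehat{\mathcal{V}}_R^{r,s}(K_0,a)-\mathcal{V}_R^{r,s}(K_0)|$, which I would estimate pathwise, on the full-measure event that $p_{K_0}$ is unique at all grid points that can contribute (so Lemma~\ref{Le4.1:unbiased} enters only implicitly, to know that the effective sums are finite). Writing $v(x):=\1\{\|x-p_{K_0}(x)\|<R\}\,p_{K_0}(x)^r(x-p_{K_0}(x))^s$, one has from \eqref{VoroM_finite}, \eqref{VoroM_Estimator} and $\eta=\sum_{z\in a\Z^d}\delta_{z+U_a}$ that this error equals $\sum_{z\in a\Z^d}\bigl(a^d v(z+U_a)-\int_{z+[0,a]^d}v\,\md\mathcal{H}^d\bigr)$, i.e.\ a cube-by-cube comparison of a sampled value with the corresponding integral.

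I would call a cube $z+[0,a]^d$ \emph{good} if either it is disjoint from $K_0^R$ (so $v\equiv0$ there) or $v$ agrees $\mathcal{H}^d$-a.e.\ on it with $w^r(y-w)^s$ for a single $w\in K_0$ with the cube contained in $B(w,R)$; all remaining cubes are \emph{bad}. A good cube of the second kind lies inside $K_0^R$, so good cubes with $v\not\equiv0$ are pairwise disjoint and contained in $K_0^R\subseteq B(o,\rho+R)$, hence number at most $\kappa_d(\rho+R)^d a^{-d}$. On a good cube the contribution is $0$ when $s=0$ and, for $s\ge1$, at most $\rho^r s\sqrt d\,R^{s-1}a^{d+1}$ by the Lipschitz estimate \eqref{eq:neua2} applied to $(z+U_a-w)^s-(y-w)^s$ (both points lie in $\overline{B(w,R)}$ and are at distance $\le a\sqrt d$). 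Hence the total good-cube error is at most $C(d,R,\rho,r,s)\,a$; and since any two of the (at least two) points of $K_0$ lie in $B(o,\rho)$ we have $\Nsf(K_0)\le2\rho$, so this is at most $C'(d,R,\rho,r,s)\,(a/\Nsf(K_0))$.

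For the bad cubes (each contributing at most $2\rho^rR^sa^d$) I would split into two regimes. If $a\ge\Nsf(K_0)$ it suffices to use the crude bound $|\widehat{\mathcal{V}}_R^{r,s}(K_0,a)|+|\mathcal{V}_R^{r,s}(K_0)|\le C(d,R,\rho,r,s)\,(a/\Nsf(K_0))^d$, which holds because $B(o,\rho+R)$ meets at most $(2(\rho+R)/a+2)^d$ cubes and $\Nsf(K_0)\le2\rho$. If $a\le\Nsf(K_0)$, a bad cube meets $\partial K_0^R=\{\mathrm{dist}(\cdot,K_0)=R\}$ or the Voronoi skeleton $\bigcup_w\partial V_w(K_0)$, and lies within $a\sqrt d$ of that $(d-1)$-dimensional set; since $K_0\subseteq B(o,\rho)$, only the parts inside a fixed ball $B(o,\rho'')$, $\rho''=\rho''(d,\rho,R)$, are relevant. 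So the number of bad cubes is at most $a^{-d}$ times the volume of two $a\sqrt d$-tubes, which I would control by: (i) $\mathcal{H}^d(\{R-a\sqrt d<\mathrm{dist}(\cdot,K_0)<R+a\sqrt d\})\le c(d,\rho,R)\,a$, via the coarea formula for the $1$-Lipschitz map $\mathrm{dist}(\cdot,K_0)$ together with $\mathcal{H}^{d-1}(\{\mathrm{dist}(\cdot,K_0)=t\})=\sum_w\mathcal{H}^{d-1}(\partial B(w,t)\cap V_w(K_0))\le(d/t)\,\kappa_d(\rho+t)^d$ (the cones from $w$ over $\partial B(w,t)\cap V_w(K_0)$ are pairwise disjoint and lie in $B(o,\rho+t)$); and (ii) the $a\sqrt d$-neighbourhood of $\bigcup_w(\partial V_w(K_0)\cap B(w,\rho''))$ has volume $\le c(d,\rho,R)\,a/\Nsf(K_0)$, whose core ingredient is the uniform wall-area bound $\sum_w\mathcal{H}^{d-1}(\partial V_w(K_0)\cap B(w,\rho''))\le c(d,\rho'')/\Nsf(K_0)$, proved by a cone argument: on a facet $V_w\cap V_{w'}$ the surface element equals $2\|y-w\|^{d}\|w-w'\|^{-1}$ times the solid-angle element at $w$, one has $\|w-w'\|\ge\Nsf(K_0)$, and the cones from $w$ over the facets inside $B(w,\rho'')$ are disjoint and lie in $B(o,\rho+\rho'')$. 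Dividing by $a^d$, multiplying by $2\rho^rR^sa^d$, and using $\Nsf(K_0)\le2\rho$ once more, the bad-cube error is at most $C(d,R,\rho,r,s)\,(a/\Nsf(K_0))$. Adding the good- and bad-cube bounds and the $d_H$-term gives the asserted inequality, and the sequential statement then follows at once: for $K_0(i)\to K$ and $\Nsf(K_0(i))^{-1}a_i\to0$ the right-hand side, whose constant does not depend on $i$ (only on $d,R,\rho,r,s$), tends to $0$.

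The main obstacle is making the estimates in step (ii) uniform in $K_0$ with the correct power of $\Nsf(K_0)$: a naive count of Voronoi facets only yields $\sum_w\mathcal{H}^{d-1}(\partial V_w(K_0)\cap B(w,\rho''))\lesssim|K_0|(\rho'')^{d-1}\sim\rho^d(\rho'')^{d-1}\Nsf(K_0)^{-d}$, which is too large by several powers of $\Nsf(K_0)$; the cone computation recovers the single power $\Nsf(K_0)^{-1}$ only if the radial factor $\|y-w\|^{d-1}$ and the reciprocal-cosine factor are kept together rather than bounded separately. One must also treat the outer part of the skeleton tube (requiring analogous uniform bounds $\sum_w V_{d-k}(V_w(K_0)\cap B(w,\rho''))\lesssim\Nsf(K_0)^{-k}$ by iterated cone arguments, each term being of order $(a/\Nsf(K_0))^k\le a/\Nsf(K_0)$ when $a\le\Nsf(K_0)$), and check throughout that every constant is uniform in $K_0$ and $a$ and that the split $a\lessgtr\Nsf(K_0)$ is exhaustive.
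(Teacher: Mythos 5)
Your proposal starts the same way as the paper --- triangle inequality, the bound \eqref{eq:appr1} for the Hausdorff term, and a cube-by-cube comparison of $\widehat{\mathcal{V}}_R^{r,s}(K_0,a)$ with $\mathcal{V}_R^{r,s}(K_0)$ in which interior cubes contribute a Lipschitz error of order $a$ and boundary cubes are counted via tube volumes. The genuine divergence is in how the boundary-cube count is controlled, and the two routes are not cosmetically different.

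The paper never studies the Voronoi \emph{skeleton} globally. Instead it works one cell at a time: for each $x\in K_0$ the set $C_x := B(x,R)\cap V_x(K_0)$ is convex and contains a ball of radius $r_x\wedge R$, where $r_x\ge \Nsf(K_0)/2$ is the inradius of $V_x(K_0)$. The tube $\partial C_x + B(o,\sqrt d\,a)$ is then estimated in a single line by the inclusion $C_x + B(o,t)\subseteq c_x + (1+t/(r_x\wedge R))(C_x - c_x)$ (incenter scaling), giving
$\mathcal{H}^d(\partial C_x + B(o,\sqrt d\,a)) \le [(1+\sqrt d\,a/(r_x\wedge R))^d - 1]\,\mathcal{H}^d(C_x)$.
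Summing over $x$ uses only that the $C_x$ are pairwise disjoint and contained in $B(o,\rho+R)$. Because the scaling inequality is dimension-free (it is a consequence of convexity alone), it handles \emph{all} boundary strata of $C_x$ --- facets, edges, vertices --- simultaneously, with no separate argument for the $(d-2)$-, $(d-3)$-, \ldots, $0$-dimensional pieces.

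Your route replaces this with two global tube estimates: a coarea bound for the spherical part $\{|\,\mathrm{dist}(\cdot,K_0)-R|<\sqrt d\,a\}$ and a surface-area bound for the Voronoi skeleton. Both can be made to work: the cone argument you gesture at does give $\sum_w \mathcal{H}^{d-1}(\partial V_w\cap B(w,\rho''))\le (2d/\Nsf(K_0))\kappa_d(\rho+\rho'')^d$, \emph{provided} you compare facet area to the volume of the cone from $w$ over the facet and then use disjointness of the cells $V_w$, rather than bounding the solid-angle integral by $\omega_d$ per point (the latter loses a factor $|K_0|\sim(\rho/\Nsf(K_0))^d$, which you correctly identify as the naive overcount). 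But the step you flag at the end --- ``iterated cone arguments'' for $\sum_w V_{d-k}(V_w\cap B(w,\rho''))$, $k\ge 2$, needed because the $\sqrt d\,a$-tube around a polyhedral $(d-1)$-complex is \emph{not} controlled by its $\mathcal{H}^{d-1}$-measure alone --- is exactly the piece of work the paper's scaling trick makes unnecessary. That step is also where your sketch is thinnest: you would need a Steiner-type expansion or a stratification of the skeleton into relatively open convex faces of each dimension, with uniform-in-$K_0$ bounds on each total intrinsic volume, and then a check that each term is $\lesssim a/\Nsf(K_0)$ in the regime $a\le\Nsf(K_0)$.

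So: same skeleton (pun intended), but your estimate of the boundary-cube count is a substantially longer detour. The payoff of the paper's per-cell convexity argument is that one inequality replaces your coarea step, your cone-volume computation, and your entire lower-strata analysis. If you want to salvage your route without the iterated cone arguments, apply your tube bound per cell to $\partial C_x$ rather than to the skeleton globally and observe that $C_x$ is convex with inradius $\ge \Nsf(K_0)/2 \wedge R$ --- at which point you have rediscovered the paper's argument.
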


\begin{proof}
The constants employed in the proof are denoted by $c,c_1,c_2$ and depend only the parameters indicated in brackets. 

By the triangle inequality and in view of \eqref{eq:appr1}  it is sufficient to show that for a fixed finite set $K_0$, almost surely we have 
$$|\widehat{\mathcal{V}}_ {R}^{r,s}(K_0,a)-  \mathcal{V}_ {R}^{r,s}(K_0)|\le c(d,R,\rho,r,s)\cdot  \max\left\{\frac{a}{ \Nsf(K_0)},\left(\frac{a}{\Nsf(K_0)}\right)^d\right\}.$$ For this, observe that almost surely $p_{K_0}(z+U_a)$ is uniquely defined for each $z\in a\cdot\Z^d$. Moreover, since $K_0$ is finite and there are only finitely many $z\in a\cdot\Z^d$ having distance at most $\sqrt{d}aR$ from a point in $K_0$, all sums encountered in the following are finite. Hence, almost surely
\begin{align*}
   \widehat{\mathcal{V}}_R^{r,s}(K_0,a)
   &=a^d\sum_{z\in a\cdot \Z^d}
    \1\{\|z+U_a-p_{K_0}(z+U_a)\|<R\}p_{K_0}(z+U_a)^r(z+U_a-p_{K_0}(z+U_a))^s\\
    &=\sum_{x\in K_0}a^d\sum_{z\in a\cdot \Z^d}\1\{\|z+U_a-x\|<R\}\1\{z+U_a\in {V}_x(K_0)\}x^r(z+U_a-x)^s\\
    &=\sum_{x\in K_0}x^r\sum_{z\in a\cdot \Z^d} \1_{B(x,R)\cap  {V}_x(K_0)}(z+U_a)(z+U_a-x)^sa^d\\
    &=\sum_{x\in K_0}x^r\int \sum_{z\in a\cdot \Z^d} \1_{B(x,R)\cap  {V}_x(K_0)}(z+U_a)(z+U_a-x)^s\1_{z+[0,a)^d}(y)\,\mathcal{H}^d(\md y).
    \end{align*}
    Note that in the last step integration and summation can be interchanged, since for each $x\in K_0$ there are only finitely many summands $z\in a\cdot\Z^d$ such that the first indicator is nonzero (as pointed out above). 
    Moreover, we have
\begin{align*}
   \mathcal{V}_R^{r,s}(K_0) 
   &= \sum_{x\in K_0} x^r \int\1_{B(x,R)\cap  {V}_x(K_0)}(y)(y-x)^s\, \mathcal{H}^d(\md y)\\
   &= \sum_{x\in K_0} x^r \int\sum_{z\in a\cdot \Z^d}\1_{B(x,R)\cap  {V}_x(K_0)}(y)(y-x)^s\1_{z+[0,a)^d}(y)\, \mathcal{H}^d(\md y).
\end{align*}
    Hence, using \eqref{eq:neua1}, we get
\begin{align*}
&|\widehat{\mathcal{V}}_R^{r,s}(K_0,a)-\mathcal{V}_R^{r,s}(K_0)|\\
&\le \rho^r\sum_{x\in K_0} \int \sum_{z\in a\cdot \Z^d} \left| 
\1_{B(x,R)\cap  {V}_x(K_0)}(z+U_a)(z+U_a-x)^s- 
\1_{B(x,R)\cap  {V}_x(K_0)}(y)(y-x)^s
\right|\\
&\qquad\qquad\qquad\qquad \times 
\1_{z+[0,a)^d}(y)\, \mathcal{H}^d(\md y)  . 
\end{align*}
Suppose that $y\in z+[0,a)^d$. 
If $z+[0,a)^d\subseteq B(x,R)\cap  {V}_x(K_0)$, then \eqref{eq:neua2} shows that the expression $|\cdot|$ under the integral can be bounded from above by 
$$\left|(z+U_a-x)^s-(y-x)^s\right|\le s\cdot \|z+U_a-y\|\cdot R^{s-1}\le s\cdot \sqrt{d}R^{s-1}\cdot a.$$ 
If $(z+[0,a)^d)\cap  B(x,R)\cap  {V}_x(K_0)=\emptyset$, then the expression is zero.  In the remaining case, we have $(z+[0,a)^d)\cap \partial (B(x,R)\cap  {V}_x(K_0))\neq\emptyset$ and the expression $|\cdot|$ is bounded from above by $2R^s$ (see \eqref{eq:neua1}). Thus we obtain
\begin{align}\label{eq:bound1}
&|\widehat{\mathcal{V}}_R^{r,s}(K_0,a)-\mathcal{V}_R^{r,s}(K_0)|\nonumber\\
&\le \rho^r \sum_{x\in K_0}\left(\sum_{z\in a\cdot \Z^d}\1\{z+[0,a)^d\subseteq B(x,R)\cap  {V}_x(K_0)\}\cdot s\cdot \sqrt{d}R^{s-1}\cdot a\cdot a^d\right.\nonumber\\
&\qquad\qquad\qquad\qquad  \left.+\sum_{z\in a\cdot \Z^d}\1\{(z+[0,a)^d)\cap  \partial (B(x,R)\cap  {V}_x(K_0))\neq\emptyset\}\cdot 2R^s\cdot a^d\right)\nonumber\allowdisplaybreaks\\
&\le d\rho^rR^{s-1} \sum_{x\in K_0}\Big(s\cdot a \, \mathcal{H}^d( B(x,R)\cap  {V}_x(K_0)) \nonumber\\
&\qquad\qquad\qquad\qquad 
+R\,\mathcal{H}^d\left(\partial (B(x,R)\cap  {V}_x(K_0))+B(o,\sqrt{d}\cdot a)\right)
\Big)\nonumber\allowdisplaybreaks\\
&\le s\cdot d\rho^rR^{s-1}\cdot a\cdot \mathcal{H}^d\left(K_0+B(o,R)\right)\nonumber\\
&\qquad\qquad\qquad\qquad +
d\rho^rR^{s} \sum_{x\in K_0}\mathcal{H}^d\left(\partial (B(x,R)\cap  {V}_x(K_0))+B(o,\sqrt{d}\cdot a)\right),
\end{align}
where we used that the Voronoi cells have pairwise disjoint interiors. 
Since $K_0\subset B(o,\rho)$, we get 
\begin{equation}\label{eq:bound2}
\mathcal{H}^d( K_0+B(o,R))
\le \kappa_d(\rho+R)^d. 
\end{equation}
To deal with the remaining sum, we denote by $r_x$ the inradius of the Voronoi cell $V_x(K_0)$. Using a volume bound from the proof of  Lemma 3.6 in \cite{MR3449314} and the obvious inequality $r_x\ge \Nsf(K_0)/2$ for $x\in K_0$, we obtain
\begin{align*}
    &\mathcal{H}^d\left(\partial (B(x,R)\cap  {V}_x(K_0))+B(o,\sqrt{d}\cdot a)\right)\nonumber\\
    &\le 2\left[\mathcal{H}^d\left( B(x,R)\cap  {V}_x(K_0)+B(o,\sqrt{d}\cdot a)\right)-\mathcal{H}^d\left( B(x,R)\cap  {V}_x(K_0) \right)\right]\nonumber\\
    &\le 2\left[\mathcal{H}^d\left( B(x,R)\cap  {V}_x(K_0)+ \frac{\sqrt{d}\cdot a}{r_x\wedge R}\cdot ( B(x,R)\cap  {V}_x(K_0))\right)-\mathcal{H}^d\left( B(x,R)\cap  {V}_x(K_0) \right)
    \right]\nonumber\\
    &=2\left[\left(1+ \frac{\sqrt{d}\cdot a}{r_x\wedge R}\right)^d-1\right]
    \mathcal{H}^d\left( B(x,R)\cap  {V}_x(K_0) \right)\nonumber\\
    &\le c_1(d)\cdot \max\left\{\frac{a}{\Nsf(K_0)\wedge R},\left(\frac{a}{\Nsf(K_0)\wedge R}\right)^d\right\}\mathcal{H}^d\left( B(x,R)\cap  {V}_x(K_0) \right),\nonumber 
\end{align*}
where $c_1(d):=(1+2\sqrt{d})^d-1$. 
Hence,
\begin{align}\label{eq:bound3}
&\mathcal{H}^d\left(\partial (B(x,R)\cap  {V}_x(K_0))+B(o,\sqrt{d}\cdot a)\right)\nonumber\\
    &\le c_2(d,\rho,R)\cdot \max\left\{\frac{a}{\Nsf(K_0)},\left(\frac{a}{\Nsf(K_0)}\right)^d\right\}\mathcal{H}^d\left( B(x,R)\cap  {V}_x(K_0) \right),
\end{align}
where we can choose $c_2(d,\rho,R):=c_1(d)$ if $R\ge\Nsf(K_0)$ and 
$$c_2(d,\rho,R):=c_1(d)\max\left\{\frac{2\rho}{R},\left(\frac{2\rho}{R}\right)^d\right\}$$ 
if $R<\Nsf(K_0)$, since $\Nsf(K_0)\le 2\rho$.  
The sum of the volumes $\mathcal{H}^d\left( B(x,R)\cap  {V}_x(K_0) \right)$ over $x\in K_0$ is bounded from above by   $\kappa_d(\rho+R)^d$ (as used before). Combination of \eqref{eq:bound1}, \eqref{eq:bound2}, and \eqref{eq:bound3} finally yields
\begin{align*}
&|\widehat{\mathcal{V}}_R^{r,s}(K_0,a)-\mathcal{V}_R^{r,s}(K_0)|\\
&\le d\rho^rR^{s-1}(\rho+R)^d \cdot \left(s\cdot a +  R\cdot  c_2(d,\rho,R) \cdot \max\left\{\frac{a}{\Nsf(K_0)},\left(\frac{a}{ \Nsf(K_0)}\right)^d\right\}\right),
\end{align*}
which implies the required bound, since $\Nsf(K_0)\le 2\rho$.
\end{proof}

From the comments after \eqref{eq:appr1} and the proof of Theorem~\ref{Le4.2:convergence},  which yields more explicit information about the constants involved, we get the following consequence. 

\begin{corollary}\label{cor4.4:bound}
Let $K\subset\R^d$ be a compact set, and let $r,s\in \N_0$. Let $\rho>0$ be such that $K\subseteq B(o,\rho)$. If $K_0\subseteq B(o,\rho)$ is a finite set, $ R\le R_0$, and $0<a\le \Nsf(K_0)\wedge R$, then almost surely 
$$
|\widehat{\mathcal{V}}_R^{r,s}(K_0,a)-\mathcal{V}_ {R}^{r,s}(K)|
    \le C''(
d,R_0,\rho,r,s)  \left(d_H(K,K_0)^{\frac{1}{2}}+ \frac{a}{\Nsf(K_0)\wedge R} \right), 
$$
where $C''(d,R_0,\rho,r,s)>0$ is a constant depending only on $d,R_0,\rho,r,s$, and $d_H(K,K_0)^{\frac{1}{2}}$ can be replaced by $d_H(K,K_0)$ if $r=s=0$. Moreover, if  $0<a\le \Nsf(K_0)\le R$, then
\begin{equation}\label{eq:response}
|\widehat{\mathcal{V}}_R^{r,s}(K_0,a)-\mathcal{V}_{R}^{r,s}(K_0)|
\le d\rho^rR^{s}(\rho+R)^d(s+c_1(d)) \,\frac{a}{\Nsf(K_0)} .
\end{equation}
\end{corollary}

\begin{remark}\label{rem:sec4a}
   Increasing the number of points in $K_0$ admits a better approximation of $K$ by $K_0$. This part of the approximation is already covered in previous work, which we summarized in \eqref{eq:appr1}. The main contribution of Theorem \ref{Le4.2:convergence} is to bound $|\widehat{\mathcal{V}}^{r,s}_R(K_0,a)-{\mathcal{V}}^{r,s}_R(K_0)|$ from above. An explicit upper bound is stated in \eqref{eq:response}. In the derivation of this bound, we encountered two cases. In the first case, we considered $z\in a\cdot \Z^d$ for which $(z+[0,a)^d)\subseteq B(x,R)\cap  {V}_x(K_0)$. For the corresponding sum, we get the upper bound $d\rho^rR^{s-1}(\rho+R)^dsa$, which improves as $a$ becomes smaller, independently of $N(K_0)$. The second main case deals with $z\in a\cdot \Z^d$ for which $(z+[0,a)^d)\cap \partial(B(x,R)\cap  {V}_x(K_0))\neq\emptyset$. For the corresponding second sum in \eqref{eq:bound1} we obtain the alternative upper bound 
    \begin{align}\label{eq:extraeq}
        &d\rho^rR^{s} \sum_{x\in K_0}\mathcal{H}^d\left(\partial (B(x,R)\cap  {V}_x(K_0))+B(o,\sqrt{d}\cdot a)\right)\nonumber\\
        &\le c_d\rho^rR^{s}|K_0|\cdot a\cdot \sum_{i=1}^dR^{d-i}a^{i-1}.
    \end{align}
    A simple volume bound shows that $|K_0|\le \left(\frac{2\rho}{N(K_0)}+1\right)^d$, which cannot be improved in general. In this way, we obtain an upper bound that involves ratios $\frac{a}{N(K_0)^i}$ also with $i>1$ instead of just the ratio $\frac{a}{N(K_0)}$. As a consequence of our approach, increasing $K_0$ requires more boundary terms to be controlled. If $K_0$ is considered to be fixed, then of course the error term decreases linearly with $a$. In a simulation study, we found that keeping $a$ fixed and increasing $|K_0|$ (or decreasing $N(K_0)$) when approximating a rectangle $K$ by finite point clouds $K_0$ does not improve the approximation of ${\mathcal{V}}^{r,s}_R(K_0)$ by $\widehat{\mathcal{V}}^{r,s}_R(K_0,a)$. 
It seems plausible that increasing $|K_0|$ (and decreasing $N(K_0)$, accordingly) should be matched by a corresponding decrease of the grid size $a$. 
\end{remark}

\begin{remark}\label{rem:sec4b}
   The task of finding a useful upper bound for the volume of all points having distance at most $\sqrt{d}\cdot a$ from  $ \partial (B(x,R)\cap  {V}_x(K_0))$ in \eqref{eq:bound3} is reminiscent of the situation treated in \cite[Theorem 3]{LCL17}. However, the assumption that the boundary of the set $X$ there should have positive reach is clearly not satisfied by the boundary of the Voronoi cell $V_x(K_0)$. 
\end{remark}

\subsection{Voronoi-FD algorithm}\label{sec:Voronoi-FD}

In the following, we describe a first method for estimating Minkowski tensors, which we call the Voronoi-FD algorithm, where ``FD'' stands for finite difference.
The method is based on Corollary~\ref{cor:approx} and works for arbitrary finite unions of compact sets with positive reach and (surface) tensor functionals $\Phi^{r,s}_{d-1}$ with $s\ge 1$.  

The following corollary of Theorem~\ref{thm:unions} shows how surface tensors can be approximated by Voronoi tensors. Recall the definition of the reach measure $\mu_{d-1}(A;\cdot)$ of a nonempty compact subset of $\R^d$ from Section~\ref{sec:asymptotics}, which equals the support measure $\Lambda_{d-1}(A,\cdot)$ if $A\in\mathcal{U}^d$ (see Remark~\ref{rem:sec3} which ensures that the Minkowski tensor $\Phi^{r,s}_{d-1}$ in \eqref{eq:4consistent} is consistently defined for all $r,s\in\N_0$). An explicit description of the reach measure $\mu_{d-1}(A;\cdot)$ is provided in \eqref{eq:reachd-1measure} (see \cite[Proposition 4.1]{HLW04}).

\begin{corollary}\label{cor:approx}
Let $K\subset\R^d$ be a nonempty compact set such that \eqref{eq2} holds. 
If $r\in\N_0$ and $s\in \N$, then 
\begin{align}\label{part1Cor4.4}
    \lim_{\varepsilon\to 0_+}\frac{1}{\varepsilon^{1+s}}
\mathcal{V}^{r,s}_\varepsilon(K)
=r!s!\kappa_{s+1}\Phi^{r,s}_{d-1}(K)
\end{align}
and 
\begin{equation}\label{eq:erg2}
\lim_{\varepsilon\to 0_+}\frac{1}{\varepsilon}\left(
\mathcal{V}^{r,0}_\varepsilon(K)-
\mathcal{V}^{r,0}_{\varepsilon^2}(K)\right)
=r! 2\,\Phi^{r,0}_{d-1}(K),
\end{equation}
where, for $r,s\in\N_0$,
\begin{equation}\label{eq:4consistent}
\Phi^{r,s}_{d-1}(K):=\frac{1}{r!s!}\frac{2}{\omega_{1+s}} \int_{\R^d\times\mathbb{S}^{d-1}} x^ru^s\,\mu_{d-1}(K;\md(x,u))
\end{equation}
and $\mu_{d-1}(K;\cdot)$ is a nonnegative Borel measure that satisfies \eqref{eq:reachd-1measure}.
\end{corollary}

\begin{proof}
First, we consider the case where $s\in\N$. We apply Theorem~\ref{thm:unions} with $f(x)=x^r$, $x\in\R^d$.  Using  the  definition \eqref{VoroM} of the total Voronoi tensor measure, we get the required expression on the left-hand side, after division by $1+s$. Accordingly, the right-hand side of equation \eqref{eq:Theorem3.3} turns into 
$$
\frac{2}{s+1}\int x^ru^s\, \mu_{d-1}(K;\d(x,u)),
$$
where also relation  \eqref{eq:reachd-1measure} was used. 
The assertion now follows from the definition of the Minkowski tensor $\Phi^{r,s}_{d-1}(K)$.   

If $s=0$, then  an application of Theorem~\ref{Thm1} and relation \eqref{eq:reachd-1measure} yield
\begin{align*}
\lim_{\varepsilon\to 0_+}\frac{1}{\varepsilon}\left(
\mathcal{V}^{r,0}_\varepsilon(K)-
\mathcal{V}^{r,0}_{\varepsilon^2}(K)\right)
&=\lim_{\varepsilon\to 0_+}\frac{1}{\varepsilon}\left(
\mathcal{V}^{r,0}_\varepsilon(K)-\mathcal{V}^{r,0}_0(K)\right)
 -\lim_{\varepsilon\to 0_+}\frac{\varepsilon}{\varepsilon^2}\left(
\mathcal{V}^{r,0}_{\varepsilon^2}(K)- 
\mathcal{V}^{r,0}_{0}(K)\right)\\
&=2r!\Phi^{r,0}_{d-1}(K),
\end{align*}
which proves the assertion.
\end{proof}

\begin{remark}\label{rem:speedcondlate}
Corollary \ref{cor:approx} applies, for instance, when $A$ is a finite union of compact sets of positive reach (by Lemma \ref{Lem:3.1}) or $A$ is a parallel set of an arbitrary compact set (by Remark \ref{rem:erg1}). The discussion in Remark \ref{rem:slowlimit} shows that no general statement is possible concerning the speed of convergence in \eqref{part1Cor4.4} or \eqref{eq:erg2}. 
\end{remark}

Combination of Corollary~\ref{cor4.4:bound} and Corollary~\ref{cor:approx} finally yields the next result. It serves as a theoretical foundation for the approximation of surface tensors of finite unions of sets with positive reach via Corollary~\ref{cor:approx}. We remark that for any nonempty compact set $K\subset\R^d$, we can approximate it to arbitrary precision with a discrete set, for example, the finite set
\begin{equation}\label{eq:apprdH}
K(n):=\left\{(\sqrt{d}n)^{-1}\cdot z:z\in\Z^d, {\rm dist}(K,(\sqrt{d}n)^{-1}\cdot z)\le \frac{1}{n}\right\}
\end{equation}
satisfies $d_H(K(n),K)\le \frac{1}{n}$. 

\begin{theorem}\label{cor4.5:convergence}
Let $K\subset\R^d$ be a nonempty compact set such that \eqref{eq2} holds. Let $r\in \N_0$, $s\in \N$, and let $\rho>0$ be such that $K\subseteq B(o,\rho)$. If $(\varepsilon_n)_{n\in\N}$ satisfies $\varepsilon_n\to 0_+$ as $n\to\infty$, then there exist sequences $(K_0(n))_{n\in\N}$ of finite subsets of $\R^d$ with $K_0(n)\to K$ in the Hausdorff distance as $n\to\infty$ and $(a_n)_{n\in\N}$ of real numbers  such that, almost surely,
$$
\lim_{n\to\infty}\frac{\widehat{\mathcal{V}}_{\varepsilon_n}^{r,s} (K_0(n),a_n)}{\varepsilon_n^{1+s}}
=r!s!\kappa_{s+1}\Phi^{r,s}_{d-1}(K)
$$
and
$$
\lim_{n\to\infty}\frac{1}{\varepsilon_n}\left(\widehat{\mathcal{V}}_{\varepsilon_n}^{r,0} (K_0(n),a_n) -\widehat{\mathcal{V}}_{\varepsilon_n^2}^{r,0} (K_0(n),a_n)\right)
=r!2\, \Phi^{r,0}_{d-1}(K).
$$
\end{theorem}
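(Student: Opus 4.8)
The plan is to combine the deterministic asymptotic statement of Corollary~\ref{cor:approx} with the uniform consistency estimate of Corollary~\ref{cor4.4:bound} by a diagonal argument over the sequence $(\varepsilon_n)$. First I would, for each fixed $n$, apply the approximation \eqref{eq:apprdH}: choose a finite set $K_0(n)$ with $d_H(K_0(n),K)\le \delta_n$ for some sequence $\delta_n\to 0_+$ to be pinned down below, and note that since $K\subseteq B(o,\rho)$ we may also assume $K_0(n)\subseteq B(o,\rho)$ (or slightly enlarge $\rho$, harmlessly). Then I would pick $a_n>0$ with $0<a_n\le \Nsf(K_0(n))\wedge\varepsilon_n$, say $a_n=\tfrac12\bigl(\Nsf(K_0(n))\wedge\varepsilon_n\bigr)$, so that Corollary~\ref{cor4.4:bound} applies with $R=\varepsilon_n$ (eventually $\varepsilon_n\le R_0$ for some fixed $R_0$, since $\varepsilon_n\to 0_+$).

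The second step is to make the error terms from Corollary~\ref{cor4.4:bound} negligible compared with the normalizing factors $\varepsilon_n^{1+s}$ (respectively $\varepsilon_n$). With $a_n$ chosen as above, the estimate gives, almost surely,
$$
\bigl|\widehat{\mathcal{V}}_{\varepsilon_n}^{r,s}(K_0(n),a_n)-\mathcal{V}_{\varepsilon_n}^{r,s}(K)\bigr|
\le C''\left(\delta_n^{1/2}+\frac{a_n}{\Nsf(K_0(n))\wedge\varepsilon_n}\right)
\le C''\left(\delta_n^{1/2}+\tfrac12\right),
$$
which is \emph{not} immediately small. The key point is that $\delta_n$ is at our disposal \emph{after} $\varepsilon_n$ is given, so I would instead choose $\delta_n$ so small that $\delta_n^{1/2}\le \varepsilon_n^{2+s}$ (hence $C''\delta_n^{1/2}/\varepsilon_n^{1+s}\to 0$), and then choose $a_n$ even smaller, namely $a_n\le \varepsilon_n^{2+s}\cdot\bigl(\Nsf(K_0(n))\wedge\varepsilon_n\bigr)$, so that $a_n/(\Nsf(K_0(n))\wedge\varepsilon_n)\le\varepsilon_n^{2+s}$ as well. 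With these choices, $\varepsilon_n^{-(1+s)}|\widehat{\mathcal{V}}_{\varepsilon_n}^{r,s}(K_0(n),a_n)-\mathcal{V}_{\varepsilon_n}^{r,s}(K)|\le 2C''\varepsilon_n\to 0$ almost surely, and by Corollary~\ref{cor:approx}, $\varepsilon_n^{-(1+s)}\mathcal{V}_{\varepsilon_n}^{r,s}(K)\to r!s!\kappa_{s+1}\Phi^{r,s}_{d-1}(K)$. Adding the two limits yields the first claimed convergence.

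For the case $s=0$ I would argue in parallel, treating the two Voronoi tensors $\widehat{\mathcal{V}}_{\varepsilon_n}^{r,0}$ and $\widehat{\mathcal{V}}_{\varepsilon_n^2}^{r,0}$ at radii $\varepsilon_n$ and $\varepsilon_n^2$. Here I would require $a_n\le \Nsf(K_0(n))\wedge\varepsilon_n^2$ (the smaller radius) so that Corollary~\ref{cor4.4:bound} applies at \emph{both} radii; note that the relevant normalization is now $\tfrac1{\varepsilon_n}$, while the radius in the $\varepsilon_n^2$-term is $\varepsilon_n^2$, so the error bound $C''(\delta_n^{1/2}+a_n/(\Nsf(K_0(n))\wedge\varepsilon_n^2))$ must be made $o(\varepsilon_n)$; choosing $\delta_n^{1/2}\le\varepsilon_n^{2}$ and $a_n\le\varepsilon_n^{2}\cdot(\Nsf(K_0(n))\wedge\varepsilon_n^2)$ does this. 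Then $\tfrac1{\varepsilon_n}\bigl(\widehat{\mathcal{V}}_{\varepsilon_n}^{r,0}-\widehat{\mathcal{V}}_{\varepsilon_n^2}^{r,0}\bigr)$ differs from $\tfrac1{\varepsilon_n}\bigl(\mathcal{V}_{\varepsilon_n}^{r,0}(K)-\mathcal{V}_{\varepsilon_n^2}^{r,0}(K)\bigr)$ by an almost-surely vanishing term, and the second part of Corollary~\ref{cor:approx} identifies the limit as $r!\,2\,\Phi^{r,0}_{d-1}(K)$.

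The main obstacle is purely bookkeeping: one has to order the choices correctly — $\varepsilon_n$ is prescribed, then $K_0(n)$ (equivalently $\delta_n$) and $a_n$ are selected in terms of $\varepsilon_n$ — and verify that the error term of Corollary~\ref{cor4.4:bound}, divided by the (shrinking) normalization $\varepsilon_n^{1+s}$ or $\varepsilon_n$, still tends to $0$. Since the constant $C''$ in Corollary~\ref{cor4.4:bound} is uniform in $R\le R_0$, there is no circularity, and the ``almost surely'' is uniform over $n$ because the random grid error bound in Theorem~\ref{Le4.2:convergence} holds almost surely for \emph{every} choice of the finite set and the scale $a$ simultaneously (the exceptional null set depends only on the grid construction). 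One should also remark that $K(n)$ from \eqref{eq:apprdH} has $\Nsf(K(n))>0$ automatically, so $a_n>0$ can indeed be chosen, and that $K$, being a finite union of compact sets with positive reach, lies in $\mathcal{U}^d$ only under the extra intersection condition — but Corollary~\ref{cor:approx} is stated for arbitrary finite unions of compact sets with positive reach via the reach measure $\mu_{d-1}$, so no such assumption is needed here.
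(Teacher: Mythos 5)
Your proposal is correct and follows essentially the same route as the paper's proof: it combines Corollary~\ref{cor:approx} for the deterministic asymptotics with Corollary~\ref{cor4.4:bound} for the discretization/randomization error, selecting $K_0(n)$ and $a_n$ after $\varepsilon_n$ so that the error term from Corollary~\ref{cor4.4:bound} is $o(\varepsilon_n^{1+s})$ (respectively $o(\varepsilon_n)$ for the $s=0$ case, using the smaller radius $\varepsilon_n^2$ in the constraint on $a_n$). The only cosmetic difference is bookkeeping: the paper controls errors via an auxiliary sequence $\varepsilon(n)$ and explicit bounds $1/(2n)$, $1/(4n)$, whereas you pin down $\delta_n$ and $a_n$ directly as powers of $\varepsilon_n$, which is a marginally cleaner way to make the same argument go through.
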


\begin{proof} 
We consider only the case $s=0$, the argument for $s\in\N$ is similar (but  easier). By Corollary~\ref{cor:approx}, for $n\in\N$ there is some $\varepsilon(n)>0$ such that
    \begin{align}\label{eq:no0b}
\left|\frac{1}{\varepsilon}\left(\mathcal{V}^{r,0}_\varepsilon(K)-\mathcal{V}^{r,0}_{\varepsilon^2}(K) \right)-r!2\, \Phi^{r,0}_{d-1}(K)\right|\le \frac{1}{2n}\quad \text{if }0<\varepsilon\le \varepsilon(n).
    \end{align}
where $\varepsilon(n+1)<\varepsilon(n)\le \varepsilon(1)\le 1$ for $n\in\N$ and $\varepsilon(n)\to 0_+$ as $n\to\infty$. 

We can assume that $\varepsilon_m\le \varepsilon(1)$ for $m\in\N$. For $m\in\N$ there is a unique $n\in\N$ such that $\varepsilon_m\in (\varepsilon(n+1),\varepsilon(n)]$. By the remarks preceding Theorem~\ref{cor4.5:convergence}, there is a finite set $K_0(m)\subset B(o,\rho)$ (if $m$ and therefore $n$ is sufficiently large) such that
\begin{equation}\label{eq:no1b}
d_H(K,K_0(m))\le \left(C''(d,\varepsilon(1),\rho,r,0)8n\right)^{-2}\cdot \varepsilon_m^{2}.
\end{equation}
Moreover, we choose
\begin{equation}\label{eq:no2b}
a_m\le \frac{\Nsf(K_0(m))\wedge\varepsilon_m^2}{C''(d,\varepsilon(1),\rho,r,0)8n}\cdot \varepsilon_m ,
\end{equation}
where we can assume that $C''(d,\varepsilon(1),\rho,r,0)4n\ge 1$, so that 
$$0<a_m\le \Nsf(K_0(m))\wedge \varepsilon_m^2\le  \Nsf(K_0(m))\wedge \varepsilon_m \le 1.  
$$
Corollary~\ref{cor4.4:bound}, \eqref{eq:no1b} and \eqref{eq:no2b} then yield almost surely
\begin{align*}
\left|\widehat{\mathcal{V}}_{\varepsilon_m}^{r,0} (K_0(m),a_m)-\mathcal{V}^{r,0}_{\varepsilon_m}(K)\right|\le \frac{\varepsilon_m}{4n}   ,\quad \left|\widehat{\mathcal{V}}_{\varepsilon_m^2}^{r,0} (K_0(m),a_m)-\mathcal{V}^{r,0}_{\varepsilon_m^2}(K)\right|\le \frac{\varepsilon_m}{4n} ,
\end{align*}
hence
\begin{equation}\label{eq:no3b}
    \left|\frac{1}{\varepsilon_m}\left(\widehat{\mathcal{V}}_{\varepsilon_m}^{r,0} (K_0(m),a_m)-\widehat{\mathcal{V}}_{\varepsilon_m^2}^{r,0}(K_0(m),a_m)\right)
    -\frac{1}{\varepsilon_m}\left( {\mathcal{V}}_{\varepsilon_m}^{r,0} (K)- {\mathcal{V}}_{\varepsilon_m^2}^{r,0}(K )\right)\right|\le \frac{1}{2n}.    
\end{equation}
Combining \eqref{eq:no0b} with $\varepsilon=\varepsilon_m$  and  \eqref{eq:no3b}, we arrive at   
 \begin{align} \label{eq:finbound}
\left|\frac{1}{\varepsilon_m}\left(\widehat{\mathcal{V}}_{\varepsilon_m}^{r,0} (K_0(m),a_m)-\widehat{\mathcal{V}}_{\varepsilon_m^2}^{r,0}(K_0(m),a_m)\right)-r!2\,\Phi^{r,0}_{d-1}(K)\right|\le \frac{1}{n},
    \end{align}
which implies the assertion.
\end{proof}

\begin{remark}\label{rem:aepsilon} 
Suppose that $\varepsilon_n=\varepsilon(n)$ for $n\in\N$, where $\varepsilon(n)$ is chosen so that \eqref{eq:no0b} is valid if $s=0$ and 
    \begin{align}\label{eq:no0bb}
\left| {\varepsilon^{-(1+s)}} \mathcal{V}^{r,s}_\varepsilon(K) -r!s!\kappa_{s+1}\, \Phi^{r,s}_{d-1}(K)\right|\le \frac{1}{2n}\quad \text{if }0<\varepsilon\le \varepsilon(n)
    \end{align}
if $s\in\N$. 
An inspection of the proof of Theorem \ref{cor4.5:convergence} shows that, for fixed $d,\rho,r,s,R$, we can choose $a_n$ of the order $n^{-2}\varepsilon_n^{3(1+s)}$ so that \eqref{eq:finbound} holds (with $m=n$) for $s=0$ and, similarly,  
\begin{align*}
\left| \varepsilon_n^{-(1+s)} \,\widehat{\mathcal{V}}_{\varepsilon_n}^{r,s} (K_0(n),a_n)-r!s!\kappa_{s+1}\,\Phi^{r,s}_{d-1}(K)\right|\le \frac{1}{n}
    \end{align*}
if $s\in\N$.    

An explicit choice of the sequence $\varepsilon(n)$, $n\in\N$, in \eqref{eq:no0b} and \eqref{eq:no0bb} will have to depend on the underlying set $K$, as the examples in Remark \ref{rem:slowlimit} demonstrate (see also Remark \ref{rem:speedcondlate}). In the special case where $K$ has positive reach, we can choose $\varepsilon_n$ of the order $n^{-1}$ and $a_n$ of the order $n^{-5-3s}$.
\end{remark}

The Voronoi-FD algorithm estimates the tensors $\Phi^{r,s}_{d-1}(K)$ from Corollary~\ref{cor:approx}, where $K$ is a nonempty compact set such that \eqref{eq2} holds,  by using \eqref{part1Cor4.4} if $s\ge 1$ and \eqref{eq:erg2} if $s=0$. In particular, the algorithm can be applied for a set $K$ that is a finite union of compact sets with positive reach. 
A very small value for $\varepsilon$ is chosen for this purpose, and the estimator from (\ref{VoroM_Estimator}) is used.
We tested the performance for a rectangle $[-\frac{3}{2},\frac{3}{2}]\times [-\frac{5}{2},\frac{5}{2}]$ (see Example~\ref{example:4.5}), a spherical shell with inner radius 1 and outer radius 2 (see Example~\ref{Ex:Shell}), and for a rectangle from which a smaller (open) rectangle has been removed (see Example~\ref{Ex:cuttedRect}).
The generated finite test data are obtained by intersecting these objects with a grid. 
The simulated estimates can be found in Table~\ref{table:Cor4.1}. 
To achieve good results, however, we needed an $a$ that is significantly smaller than $\varepsilon$, which itself must already be small (compare with \eqref{eq:response}, which guarantees a good approximation if $a\ll \Nsf(K_0)\le \varepsilon=R$).
In practice, the choice of $\varepsilon$ can be limited by the available $av(K_0)$, defined in \eqref{Def:av}, and the typical length scale of the spatial structure.
Since a small value of $a$ considerably slows down the computation of the estimator from (\ref{VoroM_Estimator}), this is a substantial disadvantage in practice.
Therefore, for estimating the tensors from Corollary~\ref{cor:approx} we propose to use the alternative approach via a least squares problem (Voronoi-LSQ algorithm) described in Subsection \ref{sec:Voronoi-LSQ}, whenever it is applicable.
For example, a single rendition (i.e., a run of the algorithm that provides an estimate) for the spherical shell $S$ in the simulations for Table~\ref{table:Cor4.1} using the Voronoi-FD algorithm took about 21 minutes on a standard personal computer, whereas in the simulations from Table~\ref{table:Shell} using the Voronoi-LSQ algorithm, all 25 renditions for the same spherical shell took only about 3 minutes on a standard personal computer.
Note that two renditions differ in the stationarized grid (\ref{random_grid})
since a different random offset of the grid is chosen for each rendition.
Surprisingly, the Voronoi-LSQ algorithm led to excellent results even in cases where we only have a heuristic justification (see Table~\ref{table:cuttedRect} for Example~\ref{Ex:cuttedRect}).
Finally, we point out that we observed that the second part of Theorem~\ref{cor4.5:convergence}, namely the approach for $s=0$, requires far too much runtime and memory capacity for practical applications. 
Since $a$ must be significantly smaller than $\varepsilon$ and $\varepsilon^2$ for the application of the Theorem, and $\varepsilon$ itself should already be a very small value, $a$ becomes too small (and thus the number of points in the lattice process too large) for practical use.
However, for $r=s=0$ this problem can be overcome, as the following remark demonstrates.

\begin{remark}\label{rem:trace}{\rm 
Let $K$ be a nonempty compact set such that \eqref{eq2} holds (which is satisfied if $K$ is a finite union of compact sets with positive reach). If $\tr(T)$ denotes the trace of a tensor $T\in\mathbb{T}^2(\R^d)$, that is, $\tr(T) := \sum_{j=1}^d T(e_j,e_j)$, then
\begin{align*}
    \tr\big( \Phi_{d-1}^{0,2}(K) \big) = \frac{1}{4\pi} \Phi_{d-1}^{0,0}(K),
\end{align*}
where we used that $\tr(u^2)=1$ for $u\in\mathbb{S}^{d-1}$ and the linearity of the trace operator.
}
\end{remark}

\begin{table}[t]
\centering
\captionsetup{
  labelfont = {bf},
  format = plain,
  belowskip = 1ex,
  width = \textwidth
}
\caption{Results for the two-dimensional rectangle $R_b$, $b=(3,5)^\top$, with side lengths 3 and 5 (see Example~\ref{example:4.5}), the spherical shell $S$ with inner radius 1 and outer radius 2 (see Example~\ref{Ex:Shell}) and $R_{p,q}$ from Example~\ref{Ex:cuttedRect} with  $p=(1,2)^\top$, $q=(3,5)^\top$ by using the Voronoi-FD algorithm.
We intersected $R_b,S,R_{p,q}$ with a grid of resolution $\Delta=0.0005$ and choose $\varepsilon=0.05$ and $a=0.0005$.
We took the average of 25 renditions.
}
\begin{tabular}{ccc|ccc}
  \toprule
  Tensors &  Exact value & Voronoi-FD & Tensors & Exact value & Voronoi-FD \\
  \midrule
  (\ten 0 2 1 ($R_b$))$_{1,1}$ & 0.3979 & 0.4029 & (\ten 0 2 1 ($R_b$))$_{1,2}$ & 0 & -$10^{-4}$ \\ [1ex]
  (\ten 0 2 1 ($R_b$))$_{2,2}$ & 0.2387 & 0.2438 & (\ten 0 2 1 ($S$))$_{1,1}$ & 0.3754 & 0.3754 \\ [1ex]
  (\ten 0 2 1 ($S$))$_{1,2}$ & 0 & $10^{-4}$ & (\ten 0 2 1 ($S$))$_{2,2}$ & 0.3755 & 0.3757 \\ [1ex]
  (\ten 0 2 1 ($R_{p,q}$))$_{1,1}$ & 0.5560 & 0.5564 & (\ten 0 2 1 ($R_{p,q}$))$_{1,2}$ & 0 & -$10^{-4}$ \\ [1ex]
  (\ten 0 2 1 ($R_{p,q}$))$_{2,2}$ & 0.3173 & 0.3178 & \\ [1ex]
\bottomrule
\end{tabular}
\label{table:Cor4.1}
\end{table}

\subsection{Voronoi-LSQ algorithm}\label{sec:Voronoi-LSQ}

In the following, we explain another algorithm to estimate Minkowski tensors, which is applicable for  arbitrary tensor functionals $\Phi^{r,s}_{j}$.
It is based on solving a least squares problem, which is why we call it the Voronoi-LSQ algorithm. Importantly, it offers some practical advantages.
We focus on the current improvements in comparison to the original algorithm suggested in  \cite{hug_voronoi-based_2017}.
The digitized estimator of the volume tensors and especially the least squares fit with respect to the radial dependence
strongly improve the robustness of the algorithm.
However, its theoretical foundation is only available for sets with positive reach.
Nevertheless, in Section~\ref{sec:NonconvexTests} (see Table \ref{table:cuttedRect}), we will see that it provides surprisingly accurate results for a set without positive reach.
The Voronoi-LSQ algorithm is based on the idea from \cite{hug_voronoi-based_2017} (see Section 3.2 there) to estimate the Minkowski tensors of a set $K$ with positive reach  by first approximating $K$ by a simpler set $K_0$ (for instance, $K_0$ could be a finite subset of $K$) and by solving a linear system $Ax=b$. 
The solution $x$ of the system is the $(d+1)$-dimensional vector containing the estimated Minkowski tensors $\hat{\Phi}_d^{r,s}(K_0),\dots,\hat{\Phi}_0^{r,s}(K_0)$, where $r,s\in\N_0$.
The vector $b$ contains the estimated Voronoi tensors $\hat{\mathcal{V}}_{R_0}^{r,s}(K_0),\dots,\hat{\mathcal{V}}_{R_d}^{r,s}(K_0)$, for $d+1$ different values $0<R_0<\dots<R_d$ of the radius, and the matrix $A$ is a Vandermonde-type matrix.
 It is shown in \cite{hug_voronoi-based_2017} that the solution of this system yields the exact values of the Minkowski tensors whenever the exact values of the Voronoi tensors are plugged in and $R_d$ is smaller than the reach of $K$. 
 Since inverting a matrix is numerically not stable, we replace the matrix inversion by a least squares problem, which allows us to use more than only $d+1$ equations, which further enhances the robustness of the algorithm.
However, the basic idea of the Voronoi-LSQ algorithm remains to estimate the Minkowski tensors of $K$ via the Voronoi tensors of a finite set $K_0$ by using the estimator from (\ref{VoroM_Estimator}).
Here $K_0$ can be viewed as a finite approximation of a possibly infinite set $K$.
In practice $K_0$ is usually considered to be a finite sample of points in $K$, while for generating test data one can intersect $K$ with a grid.

Now fix $a>0$, an integer $n\geq d+1$, and $0<R_1<\dots<R_n$.
Extending the linear system from \cite[(6)]{hug_voronoi-based_2017}, we define the estimators $\widehat{\Phi}_j^{r,s}(K_0,a,\RR)$, $i\in\{0,\dots,d\}$, as the solution of the linear least squares problem
\begin{align} \label{LSQ}
\widehat{\mathcal{V}}_ {R_i}^{r,s}(K_0,a) = \sum_{k=0}^d r!s! \kappa_{k+s} R_i^{k+s} \widehat{\Phi}_{d-k}^{r,s}(K_0,a,\RR), \qquad i\in \{1,\ldots,n\},
\end{align}
where $\RR:=(R_0,\dots,R_n)$.
Recall that $\kappa_n$ denotes the volume of an $n$-dimensional unit ball. Moreover, a symmetric tensor $T\in\mathbb{T}^{r+s}(\R^d)$ of rank $r+s$ is determined by the values $T(e_{i_1},\dots,e_{i_{r+s}})$ for $1\leq i_1\leq\dots\leq i_{r+s}\leq d$ (see (\ref{02-1.2a})).
The Voronoi-LSQ algorithm computes the tensors $\widehat{\Phi}_j^{r,s}(K_0,a,\RR)$, $i\in\{0,\dots,d\}$, by solving the least squares problem from (\ref{LSQ}) for each choice of $1\leq i_1\leq\dots\leq i_{r+s}\leq d$, using standard methods. The following lemma is well known (see \cite[Theorems 1.1.5 and 1.1.6]{B2024} or \cite[Theorem 9]{Hansen2013}).

\begin{lemma} \label{Cor:LQF_Continuity}
Let $s\in\N_0$ and $n\geq d+1$.
Let $x_1,y_1,\dots,x_n,y_n\in\R$, $c_0,\dots,c_d\in\R\setminus\{0\}$ and
\begin{align} \label{Def:Designmatrix}
   X:=  
\left(c_jx_i^{s+j}\right)_{\substack{i=1,\ldots,n\\ j=0,\ldots,d}}
\in\R^{n\times (d+1)}, \qquad y:=(y_1,\ldots,y_n)^\top
\in\R^{n}.
\end{align}
If $x_i\neq x_j$ for $i\neq j$, then $X$ has rank $d+1$ and $\mathbf{a}=(X^\top X)^{-1}X^\top y\in\R^{d+1}$ is the unique vector satisfying
\begin{align*}
    \mathbf{a} = \argmin\Big\{ \sum_{i=1}^n (b_0c_0x_i^s+\dots+b_dc_dx_i^{s+d}-y_i)^2 \mid (b_0,\dots,b_d)^\top\in\R^{d+1} \Big\}.
\end{align*}
\end{lemma}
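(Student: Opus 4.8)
The plan is to reduce the statement to the classical theory of the linear least squares problem; the only point that needs a short argument is that the design matrix $X$ has full column rank $d+1$.

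First I would record the factorization $X = D_x\, V\, D_c$, where $D_c := \operatorname{diag}(c_0,\dots,c_d)$, $D_x := \operatorname{diag}(x_1^s,\dots,x_n^s)$, and $V := \big(x_i^{j}\big)_{i=1,\dots,n;\,j=0,\dots,d}$ is a Vandermonde matrix. Since $n\ge d+1$ and the $x_i$ are pairwise distinct, $V$ has rank $d+1$ (it contains a square Vandermonde block with determinant $\prod_{k<\ell}(x_{i_\ell}-x_{i_k})\ne 0$), and $D_c$ is invertible because every $c_j\ne 0$. In every application in this paper the nodes are the positive radii $x_i=R_i$, so $D_x$ is invertible too and $X$ has full column rank $d+1$; in general one checks the linear independence of the columns of $X$ directly, since a relation $\sum_{j=0}^d \lambda_j c_j x_i^{s+j}=0$ for all $i$ forces the polynomial $p(t):=\sum_{j=0}^d \lambda_j c_j t^{j}$ of degree at most $d$ to vanish at the nonzero nodes among $x_1,\dots,x_n$ (at least $d+1$ of them whenever no node vanishes), hence $p\equiv 0$ and $\lambda_0=\dots=\lambda_d=0$. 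Either way, $X^\top X\in\R^{(d+1)\times(d+1)}$ is symmetric and positive definite, in particular invertible.

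Next I would identify the objective function: for $b=(b_0,\dots,b_d)^\top\in\R^{d+1}$ the quantity $\sum_{i=1}^n (b_0c_0x_i^s+\dots+b_dc_dx_i^{s+d}-y_i)^2$ is exactly $\|Xb-y\|^2$, a convex quadratic in $b$ whose Hessian $2X^\top X$ is positive definite; it is therefore strictly convex, possesses a unique minimizer, and that minimizer is characterized by the vanishing of its gradient, i.e.\ by the normal equations $X^\top X\, b=X^\top y$. Solving them yields the unique minimizer $\mathbf a=(X^\top X)^{-1}X^\top y$, which is the assertion. This last step is entirely standard and may alternatively be quoted from \cite[Theorems 1.1.5 and 1.1.6]{B2024} or \cite[Theorem 9]{Hansen2013}.

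I expect the rank claim to be the only mildly delicate point, specifically the effect of the extra factor $x_i^s$ when $s\ge 1$ and a node happens to vanish; this is taken care of by the polynomial-vanishing observation above and is in any case vacuous in the intended applications, where the nodes are distinct and positive. Everything past ``$X$ has full column rank'' is textbook linear algebra.
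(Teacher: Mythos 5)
The paper does not actually prove this lemma; it simply records it as well known and points to \cite[Theorems 1.1.5 and 1.1.6]{B2024} and \cite[Theorem 9]{Hansen2013}. Your argument is the standard one those references encode---factor $X$ through a Vandermonde matrix to see that $X$ has full column rank, then observe that $\|Xb-y\|^2$ is a strictly convex quadratic whose unique stationary point is given by the normal equations---so there is nothing to compare against, and your write-up is a correct reconstruction. One thing you noticed in passing deserves to be stated more firmly: as written, the lemma is in fact false if $s\ge 1$, $n=d+1$, and one of the nodes is zero, since then an entire row of $X$ vanishes and the rank drops (your example $d=1$, $s=1$, $x_1=0$, $x_2=1$ gives $X=\left(\begin{smallmatrix}0&0\\1&1\end{smallmatrix}\right)$). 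So the hypothesis really should include $x_i\neq 0$ (equivalently, one may restrict to $s=0$ or to $x_i>0$); in the paper's usage $x_i=R_i>0$, which is why the omission is harmless there. Your ``polynomial vanishes at the nonzero nodes'' argument covers exactly the cases where the statement is true, and you correctly flag the remaining case as vacuous in the applications---but it would be cleaner to simply add $x_i\neq 0$ to the hypotheses rather than leave the general case dangling.
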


\begin{remark}\label{rem:afterCor} {\rm 
If $s\ge 1$, then we can also consider the matrix in $\R^{n\times d}$ with $n\ge d$  obtained from $X$ by deleting the first column and setting $b_0=0$, which corresponds to the definition $\Phi_d^{r,s}=0$ if $s\neq 0$.
}
\end{remark}

\begin{remark}\label{rem:citeintro}
As a consequence of Lemma~\ref{Cor:LQF_Continuity} the least squares problem from (\ref{LSQ}) is continuous, that is whenever the input vector converges to some vector $y\in\R^n$, the solution will converge to the solution of the least squares  problem with input $y$.
In particular, if $K$ is a set with positive reach and $R_n$ is less than the reach of $K$, then the tensors $\widehat{\Phi}_j^{r,s}(K_0,a,\RR)$ from (\ref{LSQ}) converge to $ {\Phi}_j^{r,s}(K)$ as $\widehat{\mathcal{V}}_{R_i}^{r,s}(K_0,a)$ converges to $\mathcal{V}_ {R_i}^{r,s}(K)$, for which Theorem~\ref{Le4.2:convergence} provides sufficient conditions. 
\end{remark}

The approximating tensors $\widehat{\Phi}_j^{r,s}(K_0,a,\RR)$ depend on the choice of the fixed radii $0<R_1<\cdots<R_n$, but the  Minkowski tensors $ {\Phi}_j^{r,s}(K)$ will be independent of the radii (as they should be) as long as $K$ has reach greater than $R_n$. 
We would like to point out that although the solution to the least squares problem from Lemma~\ref{Cor:LQF_Continuity} involves a matrix inversion, the Voronoi-LSQ algorithm for solving the problem does not compute an inverse matrix but computes the unconstrained least-squares solution, for example, using an algorithm as in \cite{branch}.
Lemma~\ref{Cor:LQF_Continuity} merely serves to show that the solution is continuous with respect to the input.
In our situation we apply Lemma~\ref{Cor:LQF_Continuity} for each choice of $1\leq i_1\leq\dots\leq i_{r+s}\leq d$ to
\begin{align*}
    x_i = R_i, \quad c_i = r!s! \kappa_{i+s},\quad  y_i=\widehat{\mathcal{V}}_ {R_i}^{r,s}(K_0,a)(e_{i_1},\dots,e_{i_{r+s}}),
\end{align*}
to obtain $\widehat{\Phi}_{d-j}^{r,s}(K_0,a,\RR)(e_{i_1},\dots,e_{i_{r+s}})$ for $j\in\{0,\dots,d\}$.
So there are two main tasks involved in the Voronoi-LSQ algorithm: To estimate the Voronoi tensors of $K_0$ with respect to the $n$ values $R_1,\dots,R_n$ by the estimator in (\ref{VoroM_Estimator}) and then to solve the system (\ref{LSQ}). 
Therefore the only randomness involved in the algorithm is contained in the random grid $\eta$. 

We now explain the role of the relevant quantities. 
As an input the Voronoi-LSQ algorithm takes a finite set $K_0$, the rank parameters $r,s\in\N_0$ of the Minkowski tensors, the number $n\geq d+1$ of equations as well as the maximal radius $R_n$.
The smallest radius $R_1$ used is the average nearest neighbor distance in $K_0$, that is
\begin{align} \label{Def:av}
av(K_0):=  \frac{1}{|K_0|} \sum_{x\in K_0} \min\{\|x-z\|:z\in K_0\setminus\{x\}\},
\end{align}
where $|K_0|$ denotes the cardinality of $K_0$. 
The other radii are chosen equidistantly between $R_1$ and $R_n$.
Therefore we have the additional condition that $R_n$ has to be greater than $av(K_0)$, and we recommend to choose $R_n$ to be at least $(d+1)\cdot av(K_0)$.
The resolution $a$ of the random grid $\eta$ is also chosen as $av(K_0)$.

It is also possible to provide the algorithm with an observation window
\begin{align*}
W = \prod_{i=1}^d  [a_i,b_i], \quad a_i<b_i \text{ for  } i\in \{1,\ldots,d\},
\end{align*}
instead of $R_n$.
In this case the algorithm computes the distance of the data from the boundary of the observation window in each direction and chooses the minimum of these values for $R_n$.
By the distance of $K_0$ from the boundary of $W$ in some direction $s\cdot e_j$, where $s\in\{-1,1\}$ and $e_j$ denotes the $j$-th standard unit vector, we mean
\begin{align*}
&\min\{ b_j-x_j : x=(x_1,\dots,x_d)^\top\in K_0 \}, \quad s=1, \\
&\min\{ x_j-a_j : x=(x_1,\dots,x_d)^\top\in K_0 \}, \quad s=-1.
\end{align*}
The output of the Voronoi-LSQ algorithm is a list containing the $d+1$ estimated Minkowski tensors $\widehat{\Phi}_d^{r,s}(K_0,a,\RR),\dots,\widehat{\Phi}_0^{r,s}(K_0,a,\RR)$ (in this order), where a tensor is given as a dictionary whose keys are the indices of the individual values.
Note that the tensors are symmetric and therefore the algorithm does not estimate all the values since some of them are redundant.
For a tuple of indices $(i_1,\dots,i_{r+s})\in \{1,\ldots,d\}^{r+s}$ with $i_1\leq\dots\leq i_{r+s}$ the algorithm estimates the value of the tensor corresponding to these indices, but not those corresponding to permutations of the tuple since they are the same.
Table~\ref{table:Input} shows a small summary of the input of the Voronoi-LSQ algorithm.

\begin{table}[t!]
  \captionsetup{ labelfont = {bf}, format = plain, belowskip = 0.1ex, width = \textwidth }
  \centering
\caption{Input parameters of the Voronoi-LSQ algorithm}
\begin{tabular}{ll}
   \toprule
   $K_0$ & finite input data \\
   $r,s$ & rank parameters of the Minkowski tensors \\
   $n$ & number of equations considered ($n\geq d+1$) \\
   $R_n$ & maximal radius considered (cf. \eqref{Faustregel}) \\
   $a$ & resolution of the grid $\eta$ (optional; default: $av(K_0)$) \\
   $W$ & observation window containing $K_0$ (optional instead of $R_n$) \\
   $\mathrm{jobs}$ & number of cores used for parallelization (optional; default: -1, i.e., all available) \\
   \bottomrule
\end{tabular}
\label{table:Input}
\end{table}

\subsection{Implementation}
\label{sec:implementation}

Both the Voronoi-FD and the Voronoi-LSQ algorithms are implemented in Python, and the code is available via an open-source package~\cite{VorominkCode}.
One can apply the algorithm by using the Python functions \texttt{Voronoi\_LSQ} and \texttt{Voronoi\_FD} in the file \texttt{VorominkEstimation.py}.
Details can be found in the description of the code, which can also be found in \cite{VorominkCode}.
Closely following the above described estimation of the Minkowski tensors, the user chooses some rank parameters $r,s\in\N_0$. Then the Voronoi-LSQ algorithm will return estimators of the $d+1$ Minkowski tensors \ten r s 0$(K),\dots,$\ten r s d$(K)$, based on the input data $K_0$ and the dimension $d$, for a (in general) unknown set $K$. 
Both algorithms also offer the option to rotate the random grid $\eta$ on which the algorithm is based.
This option is useful for test data that lies somehow parallel to the axis spanned by the standard vectors.
For real data this rotation should, in general, not be necessary and can be skipped to further reduce the run time (especially in high dimensions).

We found the run times of our code for the Voronoi-LSQ algorithm to be convenient for practically relevant examples.
For example, a single rendition of the simulations for a two-dimensional rectangle (results can be found in the supplement), containing about 150,000 points, took an average of 30 seconds on a standard personal computer.
In this case, the algorithm was applied for 5 different rank parameter tuples $(r, s)$ (all possible combinations with $r + s \leq 2$), and for each tuple, 50 runs were performed (resulting in a total of 250 runs and a computation time of 1.5 hours).
The computation time scales with the number of points of the grid process $\eta$. Moreover, the computation time increases strongly with the dimension $d$ and with a refinement of the resolution of the grid process, which is taken as the average nearest neighbor distance $av(K_0)$ of the input data $K_0$. 
For large data sets, the computation time can be further decreased by increasing the number of cores used in an efficient parallelization.
According to Theorem~\ref{Le4.2:convergence}, it would theoretically be better to choose the grid resolution even smaller than $av(K_0)$.
However, in our simulations, we found that this did not significantly improve the results but considerably increased the runtime.

\subsection{Choice of parameters for the Voronoi-LSQ algorithm}
\label{sec:choice}

The choices of the parameters $n,R_n$ have a major influence on the results of the Voronoi-LSQ algorithm.
Therefore, we share the experience we made while working with the algorithm.
For the parameter $n$ we obtained better results for higher values.
We often used the value $n=50$. 

The choice of the parameter $R_n$ is much more intricate.
The theory requires the value of $R_n$ to be smaller than the reach of the original set $K$ that is represented by the data $K_0$.
In practice, this value is usually unknown. 
If the original set is supposed to be convex, $R_n$ could in principle be arbitrarily big.
In the general case, one should choose this parameter depending on an assessment of the data.
It is important that $R_n$ is not too small.
We recommend it to be at least $d+1$ times the average nearest neighbor distance $av(K_0)$ in the test data $K_0$, that is,
\begin{align} \label{Faustregel}
    R_n \,\geq\, (d+1)\, av(K_0).
\end{align}
From our experience, this value sometimes needs to be increased further and \eqref{Faustregel} should be treated as a lower bound.
For more advanced applications and requirements, Bayesian optimization can be a helpful tool for obtaining
the best possible parameters.

\section{Test simulations for the Voronoi-LSQ algorithm} \label{sec:TestLSQ}

In this section, we demonstrate the performance of the Voronoi-LSQ
algorithm for some test cases. Each test data set $A_0$ used in this
section represents some simple set $A$, like a sphere or a rectangle.
The test data set $A_0$ is obtained by intersecting $A$ with a cubic
grid with resolution (or lattice constant) $\Delta$. Note that in this case
$\Nsf(A_0)=av(A_0)=\Delta$.

As a first step, we compare the Voronoi-LSQ algorithm to the original
algorithm~\cite{hug_voronoi-based_2017} that is based on a
matrix inversion and that has been implemented in \cite[Section
3.2]{christensen}. For this purpose, we used the same setup and
parameters as in Figure 3.4(c) of \cite{christensen}. Specifically, we
estimate the surface area of a two-dimensional ball of radius 1 using
the Voronoi-LSQ algorithm.
We repeat this estimation using  
20 different values of $R_n$; these values are equal to those of $R_{\max}$ from Figure 3.4(c) in \cite{christensen}.
In each case, we use $n=50$.
As in \cite{christensen}, we choose $\Delta = 0.001$. Our
estimated values  range between
3.14152 and 3.14147 for the different values of $R_{n}$. These values correspond to an error of about only
53--87\% of the previous error produced by the matrix inversion
algorithm. In that sense, the accuracy can improve by almost up to a
factor of two. The improvement compared to the algorithm introduced in
Section 3.3 of \cite{christensen} is even larger (see Figure 3.4(c)
in \cite{christensen}).

In the next two subsections, we test the Voronoi-LSQ algorithm on
simple examples from different classes of sets.
In Subsection \ref{sec:ConvergenceAnalysis}, we then perform a
convergence analysis to examine the convergence rate of the
Voronoi-LSQ algorithm for the examples considered in Subsections
\ref{sec:ConvexTests} and \ref{sec:NonconvexTests}.

\subsection{Convex test cases}\label{sec:ConvexTests}

In this section we present some simulation results showing how the Voronoi-LSQ algorithm works for specific simulated test data.
We tested the algorithm for rectangles in dimension $d\in\{2,3\}$ as well as for parallel sets of a rectangle in dimension 2.
Table~\ref{table:formulas_rectangle} shows formulas for the Minkowski tensors with rank at most 2 of a rectangle of the form $[-\frac{p_1}{2},\frac{p_1}{2}]\times[-\frac{p_2}{2},\frac{p_2}{2}]\subset\R^2$.
These formulas can be obtained by the following simplification of the Minkowski tensors of polytopes.
 For a polytope $P\subset\R^d$ we obtain from \eqref{eq:interpretlambda1}, \eqref{02-1.5} and \eqref{02-1.6} that
\begin{align*}
\Phi_k^{r,s}(P) &= \frac{1}{r!s!} \frac{1}{\omega_{d-k+s}} \sum_{F\in \mathcal{F}_k(P)} \int_F \int_{N(P,F)\cap \mathbb{S}^{d-1}} x^r u^s \,\mathcal{H}^{d-k-1}(\mathrm{d}u) \,\mathcal{H}^{k}(\mathrm{d}x), \qquad k<d, \\
\Phi_d^{r,0}(P) &= \frac{1}{r!} \int_P x^r \, \mathrm{d}x, \qquad \qquad \qquad \Phi_d^{r,s}(P) = 0, \qquad s>0.
\end{align*}

\begin{example}\label{example:4.5} {\rm 
Let $R_p=[-\frac{p_1}{2},\frac{p_1}{2}]\times[-\frac{p_2}{2},\frac{p_2}{2}]\subset\R^2$ be a two-dimensional rectangle with side lengths $p_1$ and $p_2$ and $p=(p_1,p_2)^\top\in\R^2$.
Then the above formula yields for $r=2,s=0,k=1$ that
\begin{align*}
\Phi_1^{2,0}(R_p) = \frac{1}{4} \sum_{F\in \mathcal{F}_1(R_p)} \int_F  x^2 \,\mathcal{H}^{1}(\mathrm{d}x).
\end{align*}
Using $x^2=x_1^2e_1^2+2x_1x_2e_1e_2+x_2^2e_2^2$, for an edge of the form $F=[-\frac{p_1}{2},\frac{p_1}{2}]\times\{\pm\frac{p_2}{2}\}$ we obtain
\begin{align*}
\int_F  x^2 \,\mathcal{H}^{1}(\mathrm{d}x) &= \int_{-\frac{p_1}{2}}^\frac{p_1}{2} y^2e_1^2 \pm yp_2e_1e_2 + \frac{p_2^2}{4} e_2^2 \, \mathrm{d}y 
= \frac{p_1^3}{12}e_1^2 + \frac{p_1p_2^2}{4}e_2^2.
\end{align*}
Therefore the summation over all the edges of $R_p$ yields
\begin{align*}
\Phi_1^{2,0}(R_p) 
&= \frac{1}{24} \left( (p_1^3+3p_1^2p_2)e_1^2 + (3p_1p_2^2+p_2^3)e_2^2 \right).
\end{align*}
 
For $r=s=k=1$ we obtain
\begin{align*}
\Phi_1^{1,1}(R_p) &= \frac{1}{2\pi} \sum_{F\in \mathcal{F}_1(R_p)} \int_F \int_{N(R_p,F)\cap \mathbb{S}^{1}} x u \,\mathcal{H}^{0}(\mathrm{d}u) \,\mathcal{H}^{1}(\mathrm{d}x).
\end{align*}
As before we start by computing the integral for an edge of the form $F=[-\frac{p_1}{2},\frac{p_1}{2}]\times\{\pm\frac{p_2}{2}\}$.
The normal cone of this set intersected with the unit sphere is given by $\{\pm e_2\}$.
Since the tensor $xe_2$ can be written as $x_1e_1e_2+x_2e_2^2$, we obtain
\begin{align*}
\int_F \int_{N(R_p,F)\cap \mathbb{S}^{1}} x u \,\mathcal{H}^{0}(\mathrm{d}u) \,\mathcal{H}^{1}(\mathrm{d}x) &= \pm \int_F xe_2 \,\mathcal{H}^{1}(\mathrm{d}x) =
 \pm \int_{-\frac{p_1}{2}}^\frac{p_1}{2} ye_1e_2 \pm \frac{p_2}{2} e_2^2 \, \mathrm{d}y 
= \frac{p_1p_2}{2} e_2^2.
\end{align*}
After taking the summation over all edges, we get
\begin{align*}
\Phi_1^{1,1}(R_p) &= \frac{p_1p_2}{2\pi} (e_1^2+e_2^2)=\frac{p_1p_2}{2\pi}\, Q.
\end{align*}}
\end{example}

The results of the Voronoi-LSQ algorithm for a two-dimensional rectangle can be found in the supplement and are very close to the exact values.
We also tested a three-dimensional rectangle in the same way and obtained very good results.
Further we considered parallel sets of a two-dimensional rectangle and the results again were very close to the exact values (relative deviations of less
than 1\% for rectangles of varying sizes with up to 1.3 million points).
The detailed results, along with those for a three-dimensional rectangle, where we once again achieved very good outcomes, can be found in the supplementary materials.

\begin{table}[t!]
\centering
\captionsetup{
  labelfont = {bf},
  format = hang,
  belowskip = 1ex,
  width = \textwidth
}
\caption{Formulas for the Minkowski tensors with rank at most 2 of a rectangle of the form $[-\frac{p_1}{2},\frac{p_1}{2}]\times[-\frac{p_2}{2},\frac{p_2}{2}]\subset\R^2$, where the values of \ten 1 0 k, \ten 0 1 k vanish for all $k\in\{0,1,2\}$.}
\scalebox{0.95}{
\begin{tabular}{cc|cc|cc}
  \toprule
  Tensor & Formula & Tensor & Formula & Tensor & Formula \\
  \midrule
  
  \ten 0 0 0 & 1 & \ten 0 0 1 & $p_1+p_2$ & \ten 0 0 2 & $p_1\cdot p_2$ \\ [1ex]
  \ten 0 2 0 & $\frac{1}{4\pi}\left( e_1^2+e_2^2 \right)$ & \ten 0 2 1 & $\frac{1}{4\pi}\left( p_2e_1^2+p_1e_2^2 \right)$ & \ten 0 2 2 & 0  \\ [1ex]
  \ten 2 0 0 & $\frac{1}{8}\left( p_1^2e_1^2+p_2^2e_2^2 \right)$ & \ten 2 0 1 & $\frac{1}{24}\sum_{i=1}^2 \left( p_i^3 + 3p_i^2p_j \right) e_i^2 $ & \ten 2 0 2 & $\frac{1}{24}\left( p_1^3p_2e_1^2+p_2^3p_1e_2^2 \right)$ \\ [1ex]
  \ten 1 1 0 & $\frac{1}{2\pi}\left( p_1e_1^2+p_2e_2^2 \right)$ & \ten 1 1 1 &  $\frac{p_1p_2}{2\pi}\left( e_1^2+e_2^2 \right)$ & \ten 1 1 2 & 0  \\ [1ex]
\bottomrule
\end{tabular}
}
\label{table:formulas_rectangle}
\end{table}

\subsection{Nonconvex test cases}
\label{sec:NonconvexTests}

If $A\subset\R^d$ is a set of positive reach which is the closure of its interior, the boundary $\partial A$ of $A$ is of class $C^2$, and $A$ has a unique exterior unit outer normal vector $\nu(A,x)$ at each $x\in\partial A$ (that is, $\partial A=\partial^1A$), then we obtain as in \eqref{eq:interpretlambda2} that
$$
\Lambda_j(A,\cdot)=\frac{1}{\omega_{d-j}}\int_{\partial A}\1\{(x,\nu(A,x))\in\cdot\} \sum_{|I|=d-1-j}\prod_{i\in I}k_i(A,x)\, \mathcal{H}^{d-1}(\d x),
$$
where $k_1(A,x),\ldots,k_{d-1}(A,x)\in \R$ are the principal curvatures of $A$ at $x\in\partial A$ (with respect to the outer unit normal) and the summation extends over all subsets $I\subseteq\{1,\ldots,d-1\}$ of cardinality $|I|=d-1-j$ (if $j=d-1$ the empty product is interpreted as $1$). Note that if $A$ is convex, then $k_i(K,x)\ge 0$ for $i=0,\ldots,d-1$. Thus, for $k\in\{0,\dots,d-1\}$,  it follows from \eqref{02-1.5} that
$$
\Phi^{r,s}_k(A)=\frac{1}{r!s!\omega_{d-k+s}}\int_{\partial A}x^r\nu(A,x)^s\sum_{|I|=d-1-k}\prod_{i\in I}k_i(A,x)\, \mathcal{H}^{d-1}(\d x).
$$

\begin{example} \label{Ex:Shell} {\rm 
Let $A=\{x\in\R^d:\rho_1\le \|x\|\le \rho_2\}$ denote a spherical shell with inner radius $\rho_1$ and outer radius $\rho_2$, where $0\le \rho_1<\rho_2<\infty$. 
Observe that the reach of $A$ is $\rho_1$ whenever $\rho_1>0$ (if $ \rho_1=0$, then  $A$ is a Euclidean ball with radius $\rho_2$).
Since the principal curvatures of $A$ at  $x\in\partial A$  are equal to $\rho_2^{-1}$, if $\|x\|=\rho_2$, and equal to $-\rho_1^{-1}$, if $\|x\|=\rho_1>0$, we simply write $k(A,x)$ for these principal curvatures and get
\begin{align*}
    \Phi^{r,s}_k(A)&=\frac{\binom{d-1}{k}}{r!s!\omega_{d-k+s}}\int_{\partial A}x^r\nu(A,x)^sk(A,x)^{d-1-k} \, \mathcal{H}^{d-1}(\d x)\\
    &=\frac{\binom{d-1}{k}}{r!s!\omega_{d-k+s}}
    \int_{ \bbS^{d-1}} \left((\rho_2 u)^ru^s \rho_2^{k+1-d}\rho^{d-1}_2 + (\rho_1 u)^r (-u)^s (-\rho_1)^{k+1-d}\rho^{d-1}_1\right)\,\mathcal{H}^{d-1}(\d u)\\
    &=\frac{\binom{d-1}{k}}{r!s!\omega_{d-k+s}}\left(\rho_2^{r+k}+(-1)^{s+d-1-k}\rho_1^{r+k}\right) \int_{ \bbS^{d-1}}u^{r+s}\, \mathcal{H}^{d-1}(\d u)\\
    &=\1\{r+s\text{ even}\}\frac{\binom{d-1}{k}2\omega_{d+r+s}}{r!s!\omega_{d-k+s}\omega_{r+s+1}}\left(\rho_2^{r+k}+(-1)^{s+d-1-k}\rho_1^{r+k}\right)Q^{\frac{r+s}{2}},
\end{align*}
where the transformation formula and \cite[Lemma~7]{HW18} have been used to determine the integral. 

If $k=d-1$, this specializes to
$$
 \Phi^{r,s}_{d-1}(A)=\1\{r+s\text{ even}\}\frac{2\omega_{d+r+s}}{r!s!\omega_{s+1}\omega_{r+s+1}}\left(\rho_2^{r+d-1}+(-1)^{s}\rho_1^{r+d-1}\right)Q^{\frac{r+s}{2}},
$$
and if $k=d-1$ and $r=0$, we obtain
$$
 \Phi^{0,s}_{d-1}(A)=\1\{s\text{ even}\}\frac{2\omega_{d+s}}{s!\omega_{s+1}^2}\left(\rho_1^{d-1}+\rho_2^{d-1}\right)Q^{\frac{s}{2}}.
$$
Table \ref{table:Shell} shows simulation results of the Voronoi-LSQ algorithm for the two-dimensional spherical shell with inner radius $\rho_1=1$ and outer radius $\rho_2=2$.}
\end{example}

\begin{table}[t!]
\centering
\captionsetup{
  labelfont = {bf},
  format = plain,
  belowskip = 1ex,
  width = \textwidth
}
\caption{Results of the Voronoi-LSQ algorithm for the two-dimensional spherical shell with inner radius $\rho_1=1$ and outer radius $\rho_2=2$ intersected with a grid of resolution $\Delta=0.005$.
The parameter choices were $n=50$ and $R_{n}=0.9$ (0.9 times the reach of the spherical shell).
We took the average of 25 renditions.
All values are rounded to the fourth significant digit.}
\begin{tabular}{ccc|ccc}
  \toprule
  Tensors &  Exact value & Voronoi-LSQ & Tensors & Exact value & Voronoi-LSQ \\
  \midrule
  \
  \ten 0 0 0 & 0 & $-10^{-3}$ & \ten 0 0 1 & 9.4248 & 9.4441 \\ [1ex]
  \ten 0 0 2 & 9.4248 & 9.4018 & (\ten 0 2 1)$_{1,1}$ & 0.3749 & 0.3749 \\ [1ex]
  (\ten 0 2 1)$_{1,2}$ & 0 & $10^{-4}$ & (\ten 0 2 1)$_{2,2}$ & 0.3749 & 0.3748 \\ [1ex]
  (\ten 1 1 1)$_{1,1}$ & 1.5 & 1.495 & (\ten 1 1 1)$_{1,2}$ & 0 & $-10^{-3}$ \\ [1ex]
  (\ten 1 1 1)$_{2,2}$ & 1.5 & 1.492 & & & \\ [1ex]
\bottomrule
\end{tabular}
\label{table:Shell}
\end{table}

\begin{table}[b!]
\centering
\captionsetup{
  labelfont = {bf},
  format = plain,
  belowskip = 1ex,
  width = \textwidth
}
\caption{Results of the Voronoi-LSQ algorithm for $R_{p,q}$ with $p=(1,2)^\top$, $q=(3,5)^\top$ intersected with a grid of resolution $\Delta=0.01$.
The parameter choices were $n=50$ and $R_{n}=0.45$ ($0.9\cdot0.5$ times the length of the smaller side of $R_p$).
We took the average of 25 renditions.
}
\begin{tabular}{ccc|ccc}
  \toprule
  Tensors &  Exact value & Voronoi-LSQ & Tensors & Exact value & Voronoi-LSQ \\
  \midrule
  \
  \ten 0 0 0 & 0 & -0.2837 & \ten 0 0 1 & 11 & 11.01  \\ [1ex]
  \ten 0 0 2 & 13 & 13.00 & (\ten 0 2 1)$_{1,1}$ & 0.5570 & 0.5477 \\ [1ex]
  (\ten 0 2 1)$_{1,2}$ & 0 & $-10^{-4}$ & (\ten 0 2 1)$_{2,2}$ & 0.3183 & 0.3110 \\ [1ex]
  (\ten 1 1 1)$_{1,1}$ & 2.069 & 2.073 & (\ten 1 1 1)$_{1,2}$ & 0 & $10^{-3}$ \\ [1ex]
  (\ten 1 1 1)$_{2,2}$ & 2.069 & 2.073 & & & \\ [1ex]
\bottomrule
\end{tabular}
\label{table:cuttedRect}
\end{table}

\begin{example} \label{Ex:cuttedRect} {\rm 
Let $R_p=[-\frac{p_1}{2},\frac{p_1}{2}]\times[-\frac{p_2}{2},\frac{p_2}{2}]\subset\R^2$ be a two-dimensional rectangle with side lengths $p_1$ and $p_2$ and $p=(p_1,p_2)^\top\in\R^2$. For $0<p_1<q_1$ and $0<p_2<q_2$ we consider $R_{p,q}:=R_q\setminus R_p^\circ$, i.e., the interior of the rectangle $R_p$ is removed from the enclosing rectangle $R_q$. The set $R_{p,q}$ does not have positive reach, but it is a polyconvex set and in particular a finite union of compact sets with positive reach. The surface tensors $\Phi_1^{r,s}(R_{p,q})$ can be expressed in terms of the surfaces tensors of $R_p$ and $R_q$ by
$$
\Phi_1^{r,s}(R_{p,q})=\Phi_1^{r,s}(R_{q})+(-1)^s\Phi_1^{r,s}(R_{p}),
$$
where $r,s\in\N_0$; see Table~\ref{table:formulas_rectangle} for explicit formulas. Table \ref{table:cuttedRect} shows simulation results of the Voronoi-LSQ algorithm for $R_{p,q}$ with $p=(1,2)^\top$, $q=(3,5)^\top$.}
\end{example}

\subsection{Convergence analysis} \label{sec:ConvergenceAnalysis}

\begin{figure}[t]
  \captionsetup{ labelfont = {bf}, format = plain }
  \centering
  \includegraphics[height=0.6\linewidth]{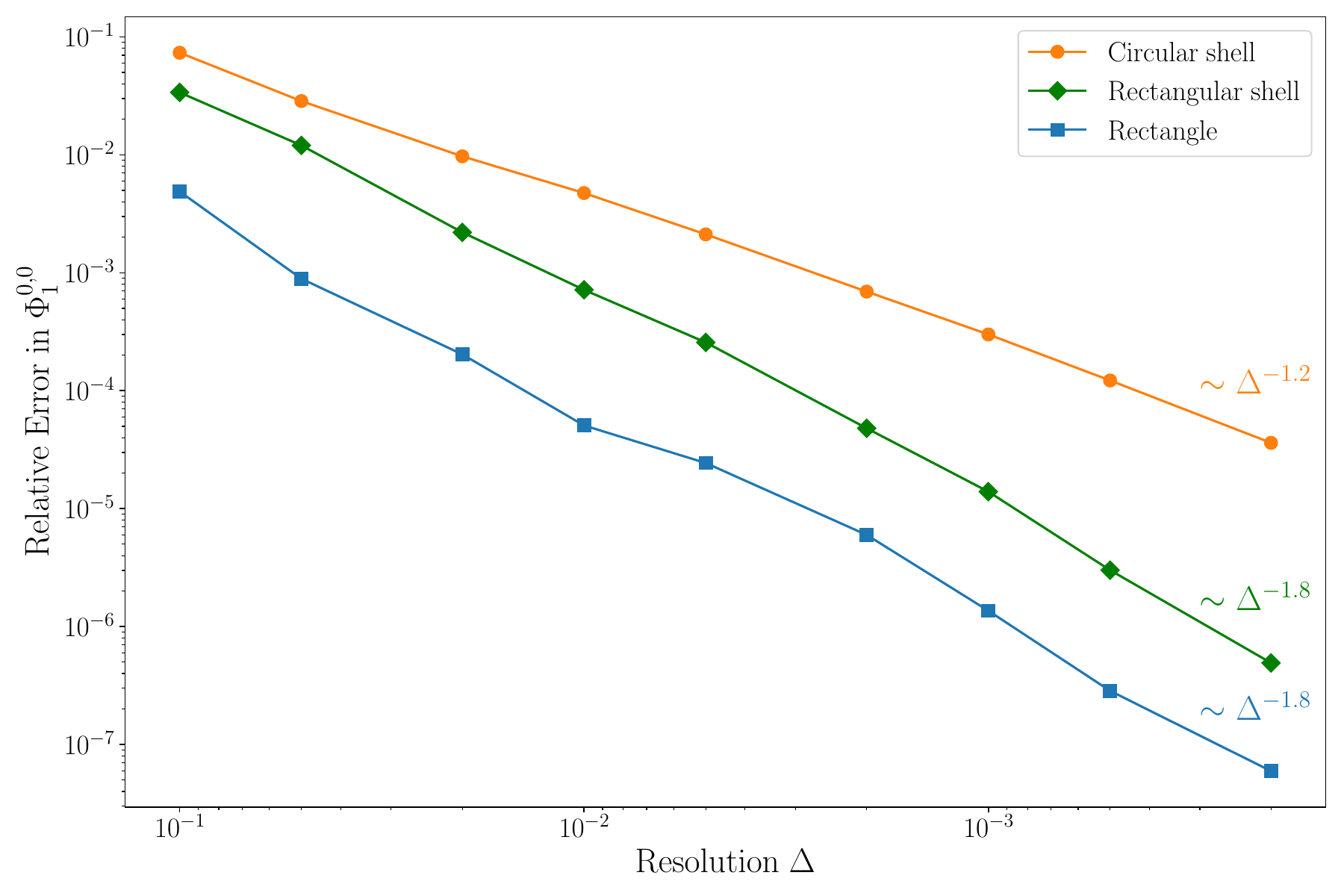}
  \caption{The relative error in estimating  $\Phi_1^{0,0}$ (which equals half of the surface area) of the sets from Examples \ref{example:4.5}, \ref{Ex:Shell} and \ref{Ex:cuttedRect} using the Voronoi-LSQ algorithm.
  The test data were obtained by intersecting the sets with a grid of resolution $\Delta$ and the relative error is shown as a function of $\Delta$.}
  \label{fig:ConvergenceDiagram}
\end{figure}

After comparing the results of the Voronoi-LSQ algorithm for a fixed choice of $R_n$ with the true values, we now investigate the dependence of the error on the resolution of the test sets.
Recall that the test data were generated by intersecting the test sets with a grid of a given resolution.
Figure \ref{fig:ConvergenceDiagram} shows the relative error of the estimated surface areas for the sets from Examples \ref{example:4.5}, \ref{Ex:Shell} and \ref{Ex:cuttedRect}, 
plotted as a function of the resolution, which ranges over about three orders of magnitude.
Over this range, the error in the surface area drops by about five orders of magnitude.
All algorithm parameters and the precise specifications of the example sets were adopted exactly as in the simulations presented in Subsections \ref{sec:ConvexTests} and \ref{sec:NonconvexTests}.
As theoretically expected, the error of the algorithm's results converges to zero as the resolution decreases for the rectangle and the circular shell.
Our simulations suggest that this holds for the rectangular shell as well, although no theoretical proof is available.
A relative error of 0.1\% is achieved at a resolution of 0.002 (circular shell), 0.01 (rectangular shell), and 0.05 (rectangle), respectively.

\section{Isotropic random polytopes}
\label{sec:beta}

Computing Minkowski tensors of random polygons can be beneficial for a
range of applications, examples are convex hulls in image classification
and object detection~\cite{preparata_computational_1985,
yang_image_1999}, as well as modeling and characterizing the structure
of cellular materials, like polycrystalline metals, foams, or biological
tissues~\cite{schroder-turk_tensorial_2010,
schroder-turk_minkowski_2011, klatt_cell_2017}. Here we demonstrate the
reliability of our Voronoi-LSQ algorithm also for random shapes by
applying it to a prominent model of random convex polygons, known as
beta-polytopes; for examples, see Fig.~\ref{fig:betaPolytopes}.

We recall that, in cases where it is known that the set to be estimated
has positive reach (which is the case for convex sets), the Voronoi-LSQ
algorithm is more suitable than the Voronoi-FD algorithm, as it requires
significantly less computation time.
Therefore, we restrict our attention in this section to the Voronoi-LSQ
algorithm.

In Section~\ref{sec:betatheo} we first show how the expected
value of the Minkowski tensors $\Phi^{0,s}_k(Z)$ for an isotropic random
polytope $Z$ can be expressed in terms of the metric tensor $Q$ and the
expected value of the intrinsic volume $V_k(Z)$. From this relation we
then derive explicit formulas for beta-polytopes, a  class of isotropic
random polytopes that has proved to be useful in stochastic geometry
(see, e.g., \cite{KABLUCHKO2020107333}). In Section~\ref{sec:betasim},
the exact evaluations of the resulting formulas are compared to the
results of a simulation study based on our algorithm.

\begin{figure}[t]
  \captionsetup{ labelfont = {bf}, format = plain }
  \centering
  \subfloat[][]{\includegraphics[height=0.45\linewidth]{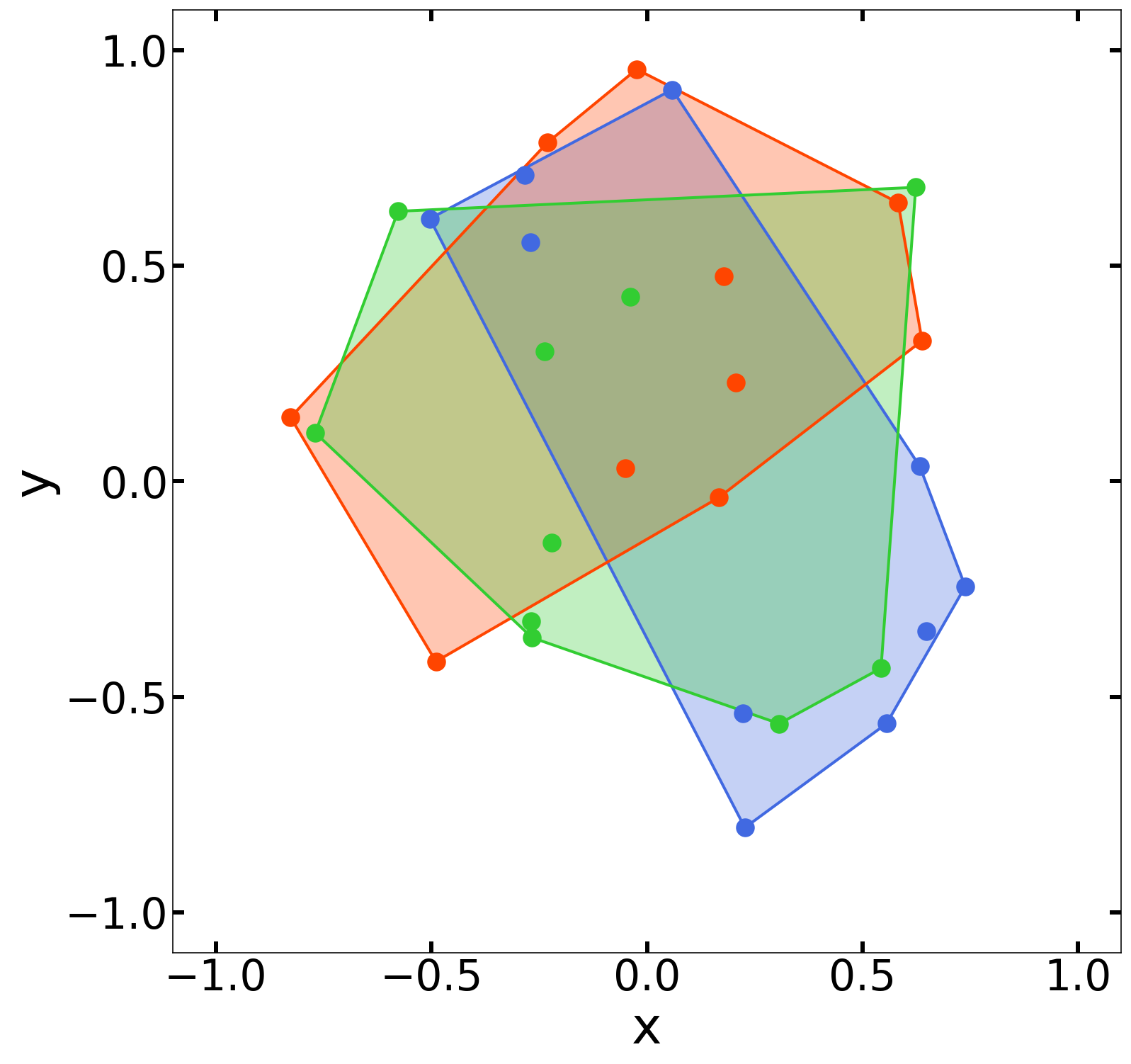}}
  \quad
  \subfloat[][]{\includegraphics[height=0.45\linewidth]{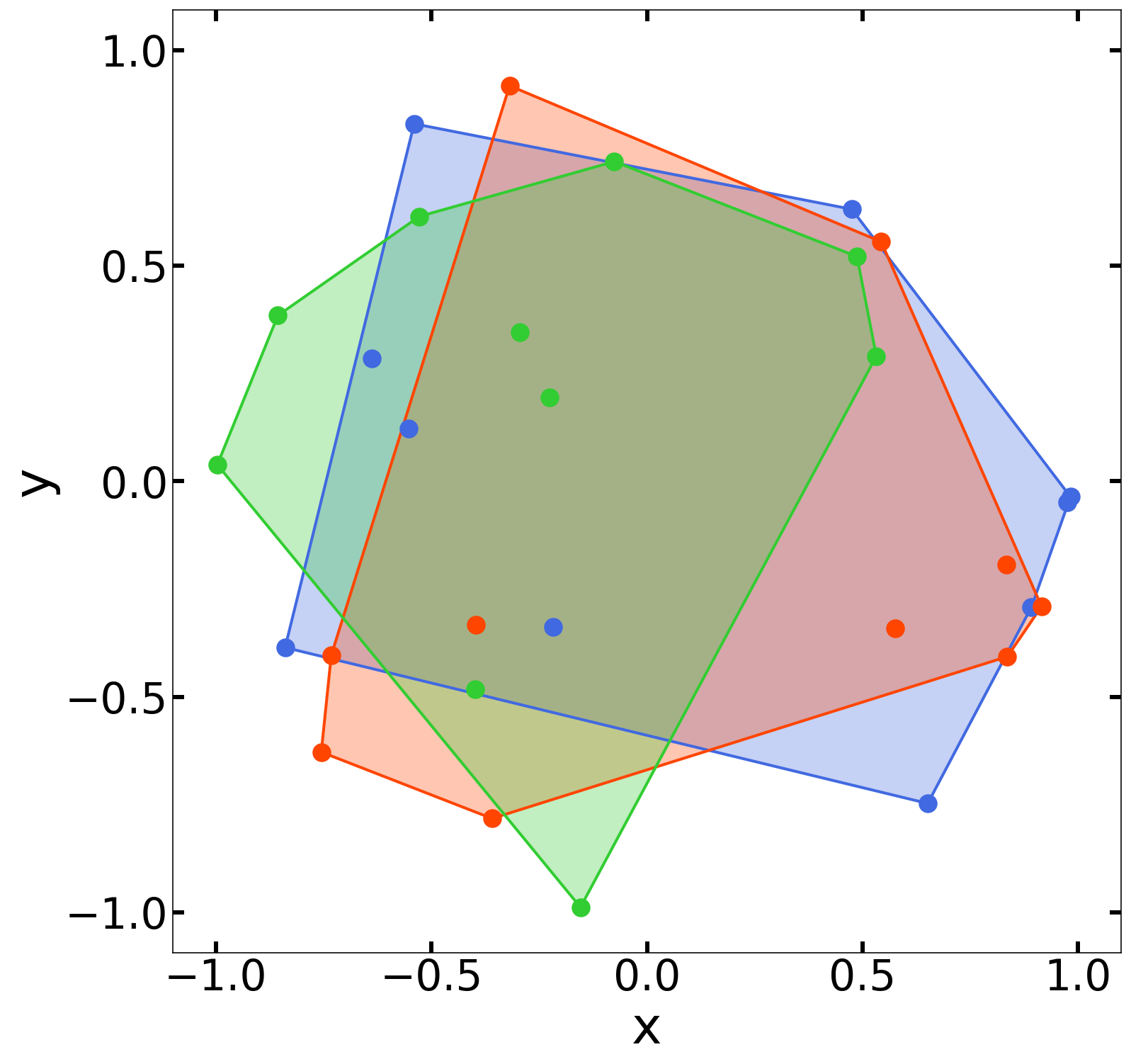}}
  \caption{Three independent realizations of a two-dimensional
  beta-polytope $P_{10,2}^\beta$ for (a) $\beta=\frac{1}{2}$ and (b)
  $\beta=-\frac{1}{2}$.}
  \label{fig:betaPolytopes}
\end{figure}

\subsection{Exact expectations}
\label{sec:betatheo}

In this section, we focus on the case $r=0$. A random polytope $Z$ in $\R^d$ is said to be isotropic if $\rho Z$ and $Z$ have the same  distribution for all $\rho\in SO(d)$.  

\begin{lemma}\label{lem:6.1}
Let $Z$ be an isotropic random polytope in $\R^d$. Let $k\in\{0,\ldots,d-1\}$. If $\mathbb{E} V_k(Z)<\infty$, then
$$
\BE \Phi^{0,s}_k(Z)=\1\{s \text{ \rm even}\}\frac{2\omega_{d+s}\omega_{d-k}}{s!\omega_d\omega_{s+1}\omega_{d-k+s}}\BE  V_k(Z)\cdot Q^{\frac{s}{2}};
$$
in particular, the surface tensors are given by
\begin{equation}\label{eq:6.2}
\BE \Phi^{0,s}_{d-1}(Z)=\1\{s \text{ \rm even}\}\frac{4\omega_{d+s}}{s!\omega_d\omega_{s+1}^2}\BE  V_{d-1}(Z)\cdot Q^{\frac{s}{2}},
\end{equation}
\end{lemma}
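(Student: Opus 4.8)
The plan is to exploit the isotropy of $Z$ together with the rotation covariance of the Minkowski tensors. Recall from Section~\ref{sec:prelim} that $\vartheta\Phi^{0,s}_k(K)=\Phi^{0,s}_k(\vartheta K)$ for every $\vartheta\in SO(d)$ and every convex body $K$ (this follows from the covariance of the support measures $\Lambda_k(K,\cdot)$ under rigid motions, applied here with the translational part equal to the identity). Since $Z$ is isotropic, $\vartheta Z$ has the same distribution as $Z$ for all $\vartheta\in SO(d)$, and hence (using $\mathbb{E}|\Phi^{0,s}_k(Z)|<\infty$, which follows from $\mathbb{E}V_k(Z)<\infty$ and the bound $|u^s|\le 1$ in the integrand of \eqref{02-1.5}, so that $|\Phi^{0,s}_k(Z)|\le cV_k(Z)$ for a constant $c$) we may take expectations and conclude
$$
\vartheta\,\mathbb{E}\Phi^{0,s}_k(Z)=\mathbb{E}\Phi^{0,s}_k(\vartheta Z)=\mathbb{E}\Phi^{0,s}_k(Z)\qquad\text{for all }\vartheta\in SO(d).
$$
Thus $T:=\mathbb{E}\Phi^{0,s}_k(Z)\in\mathbb{T}^s(\R^d)$ is a rotation-invariant symmetric tensor of rank $s$.

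The next step is to invoke the classification of $SO(d)$-invariant symmetric tensors: every such tensor of odd rank is zero, and every such tensor of even rank $s$ is a scalar multiple of $Q^{s/2}$ (for $d\ge 2$; the case $d=1$ being trivial). This immediately yields $T=0$ when $s$ is odd, matching the factor $\mathds{1}\{s\text{ even}\}$, and $T=\lambda\, Q^{s/2}$ for some $\lambda\in\R$ when $s$ is even. To identify $\lambda$, I would contract with the metric tensor, i.e.\ apply the trace map $s/2$ times (equivalently, evaluate on a suitable orthonormal frame). Since $\operatorname{tr}(Q)=d$ and, more precisely, iterated contraction of $Q^{s/2}$ produces an explicit combinatorial constant depending only on $d$ and $s$, it suffices to compute the corresponding full contraction of $\mathbb{E}\Phi^{0,s}_k(Z)$ directly from \eqref{02-1.5}: the integrand $u^s$ contracts to $\langle u,u\rangle^{s/2}\cdot(\text{const})=(\text{const})$ since $\|u\|=1$ on $\mathbb{S}^{d-1}$, leaving $\Lambda_k(Z,\R^d\times\mathbb{S}^{d-1})=V_k(Z)$ times the normalizing prefactor $\frac{1}{s!}\frac{\omega_{d-k}}{\omega_{d-k+s}}$. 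Comparing the two contractions determines $\lambda$ as the asserted ratio of $\omega$'s times $\mathbb{E}V_k(Z)$.

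An alternative, perhaps cleaner, route that avoids bookkeeping of contraction constants: average the covariance relation over $SO(d)$ against Haar measure $\nu$ before taking the expectation in $Z$. For a fixed convex body $K$ one has $\int_{SO(d)}\vartheta\Phi^{0,s}_k(K)\,\nu(d\vartheta)=\frac{1}{s!}\frac{\omega_{d-k}}{\omega_{d-k+s}}\int_{\R^d\times\mathbb{S}^{d-1}}\bigl(\int_{SO(d)}(\vartheta u)^s\,\nu(d\vartheta)\bigr)\Lambda_k(K,\md(x,u))$, and the inner rotational average $\int_{SO(d)}v^s\,\nu(d\vartheta)$ with $v=\vartheta u$, $\|u\|=1$, is a known universal tensor $=\mathds{1}\{s\text{ even}\}\,c_{d,s}\,Q^{s/2}$ with $c_{d,s}=\frac{\omega_{d+s}}{\omega_d\omega_{s+1}}$ (this is exactly the constant appearing in \cite[Lemma~7]{HW18}, cf.\ its use in Example~\ref{Ex:Shell}). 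Since $Z$ is isotropic, $\mathbb{E}\Phi^{0,s}_k(Z)=\int_{SO(d)}\mathbb{E}\Phi^{0,s}_k(\vartheta Z)\,\nu(d\vartheta)=\mathbb{E}\int_{SO(d)}\vartheta\Phi^{0,s}_k(Z)\,\nu(d\vartheta)$, and plugging in the above with $\Lambda_k(Z,\R^d\times\mathbb{S}^{d-1})=V_k(Z)$ gives $\mathbb{E}\Phi^{0,s}_k(Z)=\mathds{1}\{s\text{ even}\}\frac{1}{s!}\frac{\omega_{d-k}}{\omega_{d-k+s}}\cdot\frac{\omega_{d+s}}{\omega_d\omega_{s+1}}\cdot\mathbb{E}V_k(Z)\cdot Q^{s/2}$, which after rearranging is precisely the claimed formula. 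The specialization $k=d-1$ uses $\omega_{d-k}=\omega_1=2$ and gives \eqref{eq:6.2}. The main obstacle is simply making sure the rotational-average constant $c_{d,s}$ is the right one and that Fubini (interchange of $\mathbb{E}$, the Haar integral, and the support-measure integral) is justified — the latter is routine given the finite total mass $V_k(Z)$ and integrability assumption, but it should be stated.
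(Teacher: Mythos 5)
Your second (``alternative, perhaps cleaner'') route is essentially the paper's proof: average the covariance identity $\vartheta\Phi^{0,s}_k(K)=\Phi^{0,s}_k(\vartheta K)$ over $SO(d)$ against Haar measure, apply Fubini, and invoke the rotational moment formula from \cite[Lemma~7]{HW18}. The paper carries this out at the level of polytope faces and normal cones rather than the support-measure representation \eqref{02-1.5}, but the computations are equivalent. However, your constant $c_{d,s}$ is off by a factor of $2$: the correct identity is
\begin{equation*}
\int_{SO(d)}(\vartheta u)^s\,\nu(\md\vartheta)=\frac{1}{\omega_d}\int_{\mathbb{S}^{d-1}} w^s\,\mathcal{H}^{d-1}(\md w)=\1\{s\text{ even}\}\,\frac{2\omega_{d+s}}{\omega_d\omega_{s+1}}\,Q^{\frac{s}{2}},
\end{equation*}
not $\frac{\omega_{d+s}}{\omega_d\omega_{s+1}}$, as you can verify already at $s=0$ using $\omega_1=2$. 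So the formula you report as ``precisely the claimed formula'' in fact undershoots it by a factor of $2$; once the constant is corrected, your computation does reproduce the lemma, and in the specialization $k=d-1$ the additional $\omega_{d-k}=\omega_1=2$ combines with it to yield the factor $4$ in \eqref{eq:6.2}.

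Your first route, via the classification of $SO(d)$-invariant symmetric tensors, is a genuinely different and legitimate argument. It is conceptually appealing because it explains \emph{a priori} why $\BE\Phi^{0,s}_k(Z)$ must vanish for odd $s$ and be a scalar multiple of $Q^{s/2}$ for even $s$. The price is that identifying the scalar $\lambda$ requires evaluating the iterated contraction of $Q^{s/2}$, a nontrivial combinatorial constant that you leave implicit; as you yourself note, that bookkeeping is the main obstacle. The paper's route (your second) avoids this entirely because the coefficient drops out directly from the spherical moment lemma.
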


\begin{proof}
Let $\rho\in SO(d)$. Since $Z$ is isotropic, we deduce from the definition of $\Phi^{0,s}_k(Z)$ that
\begin{align}\label{eq:6.1}
\BE \Phi^{0,s}_k(Z)&=\frac{1}{s!\omega_{d-k+s}}\BE\sum_{F\in \mathcal{F}_k(\rho Z)}   V_k(F)\int_{N(\rho Z,F)\cap \bbS^{d-1}} u^s\, \mathcal{H}^{d-k-1}(\mathrm{d} u)\nonumber\\
&=\frac{1}{s!\omega_{d-k+s}}\BE\sum_{F\in \mathcal{F}_k(Z)}   V_k(\rho F)\int_{\rho N( Z,F)\cap \bbS^{d-1}} u^s\, \mathcal{H}^{d-k-1}(\mathrm{d} u)\nonumber\\
&=\frac{1}{s!\omega_{d-k+s}}\BE\sum_{F\in \mathcal{F}_k(Z)}   V_k( F)\int_{ N( Z,F)\cap \bbS^{d-1}} (\rho v)^s\, \mathcal{H}^{d-k-1}(\mathrm{d} v).
\end{align}
Integration of \eqref{eq:6.1} over all $\rho\in SO(d)$ with respect to the Haar probability measure $\nu$ on $SO(d)$ and Fubini's theorem yield
\begin{align*}
\BE \Phi^{0,s}_k(Z)&=
\frac{1}{s!\omega_{d-k+s}}\BE\sum_{F\in \mathcal{F}_k(Z)}   V_k( F)\int_{ N( Z,F)\cap \bbS^{d-1}}\int_{SO(d)} (\rho v)^s\,
\nu(\mathrm{d}\rho)\,\mathcal{H}^{d-k-1}(\mathrm{d} v).
\end{align*}
The application of Fubini's theorem (in particular, interchanging expectation and integration) is justified since  $\mathbb{E} V_k(Z)<\infty$. From
$$
\int_{SO(d)} (\rho v)^s\,
\nu(\mathrm{d}\rho)=\frac{1}{\omega_d}\int_{\bbS^{d-1}} w^s\, \mathcal{H}^{d-1}(\mathrm{d}w)=\1\{s\text{ \rm even}\}\frac{2\omega_{d+s}}{\omega_d\omega_{s+1}}Q^{\frac{s}{2}}
$$
(see, e.g., \cite[Lemma~7]{HW18} and the reference given there) we deduce that
\begin{align*}
\BE \Phi^{0,s}_k(Z)&=\1\{s\text{ \rm even}\}\frac{2\omega_{d+s}}{s!\omega_d\omega_{s+1}\omega_{d-k+s}}Q^{\frac{s}{2}}\BE \sum_{F\in\mathcal{F}_k(Z)}V_k(F)\mathcal{H}^{d-k-1}(N(Z,F)\cap \bbS^{d-1})\\
&=\1\{s\text{ \rm even}\}\frac{2\omega_{d+s}\omega_{d-k}}{s!\omega_d\omega_{s+1}\omega_{d-k+s}}\BE V_k(Z)\cdot Q^{\frac{s}{2}},
\end{align*}
which proves the assertion.
\end{proof}

We consider the case where $Z=P^\beta_{\nn,d}$ is a beta-polytope with $\beta>-1$. To define this class of random polytopes, let
$$f_{d,\beta}(x):=c_{d,\beta} \1\{\|x\|<1\}\left( 1-\left\| x \right\|^2 \right)^\beta,\quad x\in\R^d,$$
with the normalizing constant  
$$
c_{d,\beta}:= \frac{ \Gamma\left( \frac{d}{2} + \beta + 1 \right) }{ \pi^{ \frac{d}{2} } \Gamma\left( \beta+1 \right) }.
$$
Hence $f_{d,\beta}$ is the density of a beta-distribution in $\R^d$.  

Let $X_1,\ldots,X_\nn$ be i.i.d.~random points in $\R^d$ whose common distribution function has Lebesgue density $f_{d,\beta}$. If $\nn\ge d+1$, then
$$
P^{\beta}_{\nn,d}:=\conv\{X_1,\ldots,X_\nn\}
$$
is called a beta-polytope (see \cite{KABLUCHKO2020107333})  in $\R^d$ (with parameters $\beta$ and $\nn$). Since the density $f_{d,\beta}$ is rotation invariant, the distribution  of
$X_1,\ldots,X_\nn$ is rotation invariant, and hence $P^{\beta}_{\nn,d}$ is an isotropic random polytope.
Figure \ref{fig:betaPolytopes} shows three independent realisations of a beta-polytope $P_{10,d}^\beta$ for $\beta\in\{\frac{1}{2},-\frac{1}{2}\}$.
It can be seen here that in the case $\beta = \frac{1}{2}$, points closer to the origin are favored compared to the case $\beta = -\frac{1}{2}$, which leads to a  random polytope having a smaller expected intrinsic volume. 
In the following we provide explicit expressions whose numerical evaluation confirms this intuitive observation.
It follows from
\cite[Corollary 2.4]{KTT19} that
\begin{align*}
\BE V_{d-1} (P^\beta_{\nn,d})&=\frac{d (2\beta+d+1)}{2^d\Gamma\left(\frac{d}{2}\right)} \binom{\nn}{d} 
\left(\frac{\Gamma\left(\beta+\frac{d+2}{2}\right)}{\Gamma\left(\beta+\frac{d+3}{2}\right)}\right)^d%\\
\!\!\int_{-1}^1(1-h^2)^{d\beta+\frac{(d-1)(d+2)}{2}}F_{1,\beta+\frac{d-1}{2}}(h)^{\nn-d}\, \mathrm{d} h
\end{align*}
with 
$$
F_{1,\beta+\frac{d-1}{2}}(h):=\frac{\Gamma\left(\beta+\frac{d+2}{2}\right)}{\sqrt{\pi}\Gamma\left(\beta+\frac{d+1}{2}\right)}\int_{-1}^h(1-x^2)^{\beta+\frac{d-1}{2}}\, \mathrm{d}x,\quad h\in [-1,1].
$$
This expectation can be rewritten in the form
\begin{align*}
\BE V_{d-1} (P^\beta_{\nn,d})&=
\frac{d\pi^{\frac{d}{2}}}{(2\beta+d+1)^{d-1}\Gamma\left(\frac{d}{2}\right)} \binom{\nn}{d} 
\left(\frac{\Gamma\left(\beta+\frac{d+2}{2}\right)}{\sqrt{\pi}\Gamma\left(\beta+\frac{d+1}{2}\right)}\right)^\nn\\
&\qquad\times \int_{-1}^1(1-h^2)^{d\beta+\frac{d-1}{2}(d+2)}
\left(\int_{-1}^h(1-x^2)^{\beta+ \frac{d-1}{2}}\, \mathrm{d}x\right)^{\nn-d}\, \mathrm{d}h.
\end{align*}
If $\beta=-\frac{1}{2}$, then
\begin{align*}
\BE V_{d-1} (P^{-\frac{1}{2}}_{\nn,d})
&=
\frac{\pi^{\frac{d}{2}}}{d^{d-2}\Gamma\left(\frac{d}{2}\right)} \binom{\nn}{d} 
\left(\frac{\Gamma\left(\frac{d+1}{2}\right)}{\sqrt{\pi}\Gamma\left(\frac{d}{2}\right)}\right)^\nn%\\
\int_{-1}^1\sqrt{1-h^2}^{d^2-2}
\left(\int_{-1}^h\sqrt{1-x^2}^{ d-2}\, \mathrm{d}x\right)^{\nn-d}\, \mathrm{d}h.
\end{align*}
For $d=2$ and $\nn\ge 3$ this simplifies to
$$
\BE V_{1} (P^{-\frac{1}{2}}_{\nn,2})=\pi\frac{\nn-1}{\nn+1}.
$$
If $\beta=\frac{1}{2}$, then
\begin{align*}
\BE V_{d-1} (P^{\frac{1}{2}}_{\nn,d})
&=
\frac{d\pi^{\frac{d}{2}}}{(d+2)^{d-1}\Gamma\left(\frac{d}{2}\right)} \binom{\nn}{d} 
\left(\frac{\Gamma\left(\frac{d+3}{2}\right)}{\sqrt{\pi}\Gamma\left(\frac{d+2}{2}\right)}\right)^\nn\\
&\qquad\times 
\int_{-1}^1\sqrt{1-h^2}^{d^2+2d-2}
\left(\int_{-1}^h\sqrt{1-x^2}^{ d}\, \mathrm{d}x\right)^{\nn-d}\, \mathrm{d}h.
\end{align*}
If $d=2$, this simplifies to
$$
\BE V_{1} (P^{\frac{1}{2}}_{\nn,2})=\pi\cdot \nn(\nn-1)\frac{9}{4^{\nn+1}}\int_{-1}^1(1-h)^3(1+h)^{2\nn-1}(2-h)^{\nn-2}\, \mathrm{d}h.
$$
Expansion of $(2-h)^{\nn-2}=(1+(1-h))^{\nn-2}$  and repeated partial integration yield
$$
\BE V_{1} (P^{\frac{1}{2}}_{\nn,2})=\pi\cdot9(\nn-1)\sum_{j=0}^{\nn-2}\frac{\binom{\nn-2}{j}}{\binom{2\nn+j+3}{j+3}}2^{j}.
$$

From \eqref{eq:6.2} we have
$$
\BE \Phi^{0,s}_{d-1}(P^\beta_{\nn,d})=\1\{s \text{ \rm even}\}\frac{4\omega_{d+s}}{s!\omega_d\omega_{s+1}^2}\BE  V_{d-1}(P^\beta_{\nn,d})\cdot Q^{\frac{s}{2}},
$$
with $\BE  V_{d-1}(P^\beta_{\nn,d})$ given as above.

Similar formulas can be derived for $\BE \Phi^{0,s}_{k}(P^\beta_{\nn,d})$, where 
$k\in\{0,\ldots,d-1\}$, $s\in\N_0$ or  $k=d$,  $s=0$, from results provided in  \cite{KTT19} in combination with Lemma~\ref{lem:6.1}. The same is true for some other classes of isotropic random polytopes, including  beta$^{\prime}$-polytopes or symmetric versions of beta- and beta$^{\prime}$-polytopes.
For example for the expected intrinsic volumes of a beta-polytope Proposition 2.3 in \cite{KTT19} yields
\begin{align*}
\mathbb{E} V_k\left(P^\beta_{\nn,d}\right) = \binom{d}{k} \frac{\kappa_d}{\kappa_k\kappa_{d-k}} \mathbb{E} V_k\left(P^{\beta+\frac{d-k}{2}}_{\nn,k}\right).
\end{align*}

\subsection{Simulation study}
\label{sec:betasim}

\begin{table}[t!]
\centering
\captionsetup{
  labelfont = {bf},
  format = plain,
  belowskip = 1ex,
  width = \textwidth
}
\caption{Results of the Voronoi-LSQ algorithm for two- and four-dimensional beta-polytopes  $P_{10,d}^\beta$.
We intersected the simulated polytopes with a grid of resolution $\Delta=0.005$ for $d=2$ and $\Delta=0.02$ for $d=4$.  
The parameter choices were $n=50$ and $R_{n}=1$ for $d=2$ ($R_{n}=0.5$ for $d=4$).  
We took the average of 50 and 10 renditions for $d=2$ and $d=4$, respectively, and computed the exact values with the formulas of Section~\ref{sec:betatheo}.
The table shows the values of the expectation \EP {k} {\beta} {10} {d}.}
\begin{tabular}{ccr|cc|ccr|cc}
  \toprule
  $d$ & $k$ & $\beta$ & Exact value & Voronoi-LSQ & $d$ & $k$ & $\beta$ & Exact value & Voronoi-LSQ \\
  \midrule
  
  $2$ & $1$ & $\frac{1}{2}$ & 2.08\dots & 2.05(3) & $4$ & $3$ & $\frac{1}{2}$ & 0.81\dots & 0.84(4) \\ [1ex]
  $2$ & $1$ & $-\frac{1}{2}$ & 2.57\dots & 2.57(3) & $4$ & $3$ & $-\frac{1}{2}$ & 1.35\dots & 1.56(8) \\ [1ex]
  $2$ & $2$ & $\frac{1}{2}$ & 1.06\dots & 1.03(3) & $4$ & $4$ & $\frac{1}{2}$ & 0.107\dots & 0.093(8) \\ [1ex]
  $2$ & $2$ & $-\frac{1}{2}$ & 1.71\dots & 1.72(6) & $4$ & $4$ & $-\frac{1}{2}$ & 0.21\dots & 0.23(2) \\

\bottomrule
\end{tabular}
\label{table:betapolynomes_d=2}
\end{table}

In this section we simulate specific beta-polytopes in dimension 2 and 4 and estimate the Minkowski tensors of the resulting polytopes by using the Voronoi-LSQ algorithm.
We restrict our focus to the Voronoi-LSQ algorithm, as its applicability to convex sets, such as polytopes, is theoretically verified and it is computationally more efficient than the Voronoi-FD algorithm.
Since the formulas from Section~\ref{sec:betatheo} simplify for $d=2$ and $\beta\in\{-\frac{1}{2},\frac{1}{2}\}$, we focus on these two values for $\beta$ (also for $d=4$).
Table~\ref{table:betapolynomes_d=2} shows the results for the expected volume and surface area of a two- and a four-dimensional beta-polytope with the choice $\nn=10$.
For $d=2$ we also simulated the case $\nn=100$ and obtained similar results.
Furthermore, Table~\ref{table:betapolynomes_d=2_s=2} contains the results for the expectation of the tensor $\Phi_{1}^{0,2}(P_{10,2}^{\beta})_{i,j}$ (left) and $\Phi_{1}^{0,4}(P_{10,2}^{\beta})_{i,j,k,l}$ (right) respectively, and Table~\ref{table:betapolynomes_d=4_s=2} gives the results for the expectation of the tensor $\Phi_{3}^{0,2}(P_{10,4}^{\beta})_{i,j}$.

\begin{table}[t!]
\centering
\captionsetup{
  labelfont = {bf},
  format = plain,
  belowskip = 1ex,
  width = \textwidth
}
\caption{Results of the Voronoi-LSQ algorithm for two-dimensional beta-polytopes $P_{10,2}^\beta$.
We intersected the simulated polytopes with a grid of resolution $\Delta=0.005$.
The parameter choices were $n=50$ and $R_{n}=1$.
We took the average of 50 (left) and 10 (right) renditions and computed the exact values with the formulas of Section~\ref{sec:betatheo}.
The table shows the average values of $\Phi_{1}^{0,2}(P_{10,2}^{\beta})$ and $\Phi_{1}^{0,4}(P_{10,2}^{\beta})$ on the left and right side, respectively.}

\begin{tabular}{rrrl|rrrrl|r}
  \multicolumn{5}{c}{$\E\left[ \Phi_{1}^{0,2}(P_{10,2}^{\beta})_{i,j} \right]$} & & \multicolumn{4}{c}{$\E\left[ \Phi_{1}^{0,4}(P_{10,2}^{\beta})_{l,l,l,l} \right]$} \\[1ex]
  \cline{1-5}\cline{7-10}
  $\beta$ & $i$ & $j$ & Exact value & Voronoi-LSQ & \hspace{0.5cm} & $\beta$ & $l$ & Exact value & Voronoi-LSQ \\
  \cmidrule{1-5}\cmidrule{7-10}

  $\frac{1}{2}$ & 1 & 1 & 0.083\dots & 0.084(2) & & $\frac{1}{2}$ & 1 & 0.0025\dots & 0.0025(1) \\[1ex]
  $\frac{1}{2}$ & 1 & 2 & 0 & -0.002(2) & & $\frac{1}{2}$ & 2 & 0.0025\dots & 0.0023(1) \\[1ex]
  $\frac{1}{2}$ & 2 & 2 & 0.083\dots & 0.084(2) & & $-\frac{1}{2}$ & 1 & 0.0031\dots & 0.0030(1) \\[1ex]
  $-\frac{1}{2}$ & 1 & 1 & 0.102\dots & 0.102(2) & & $-\frac{1}{2}$ & 2 & 0.0031\dots & 0.0031(1) \\[1ex]
  \cline{7-10}
  $-\frac{1}{2}$ & 1 & 2 & 0 & 0.001(2) \\[1ex]
  $-\frac{1}{2}$ & 2 & 2 & 0.102\dots & 0.101(2) \\
\cmidrule{1-5}
\end{tabular}

\begin{tablenotes}
The obtained values for index tuples $(i,j,k,l)$ (for the tensor of rank 4), where not all entries are the same, were all not greater than 1.1$\cdot 10^{-3}$ (here the exact values are 0).
\end{tablenotes}
\label{table:betapolynomes_d=2_s=2}
\end{table}

\begin{table}[t!]
\centering
\captionsetup{
  labelfont = {bf},
  format = plain,
  belowskip = 1ex,
  width = \textwidth
}
\caption{Results of the Voronoi-LSQ algorithm for four-dimensional beta-polytopes $P_{10,4}^\beta$.
We intersected the simulated polytopes with a grid of resolution $\Delta=0.02$.
The parameter choices were $n=50$ and $R_{n}=0.5$.
We took the average of 10 renditions and computed the exact values with the formulas of Section~\ref{sec:betatheo}.
The table shows the values of the expectation $\E\left[ \Phi_{3}^{0,2}(P_{10,4}^{\beta})_{i,j} \right]$.}
\begin{tabular}{rrrr|c|rrrr|c}
  \toprule
  $\beta$ & $i$ & $j$ & Exact value & Voronoi-LSQ & $\beta$ & $i$ & $j$ & Exact value & Voronoi-LSQ \\
  \midrule

  $\frac{1}{2}$ & 1 & 1 & 0.016\dots & 0.010(2) & $-\frac{1}{2}$ & 1 & 1 & 0.027\dots & 0.022(3) \\[1ex]
  $\frac{1}{2}$ & 2 & 2 & 0.016\dots & 0.011(1) & $-\frac{1}{2}$ & 2 & 2 & 0.027\dots & 0.020(3) \\[1ex]
  $\frac{1}{2}$ & 3 & 3 & 0.016\dots & 0.011(1) & $-\frac{1}{2}$ & 3 & 3 & 0.027\dots & 0.022(4) \\[1ex]
  $\frac{1}{2}$ & 4 & 4 & 0.016\dots & 0.014(2) & $-\frac{1}{2}$ & 4 & 4 & 0.027\dots & 0.019(2) \\
  
\bottomrule
\end{tabular}
\begin{tablenotes}
The obtained values for $j\neq i$ were all not greater than 2.2$\cdot 10^{-3}$ (here the exact values are 0).
\end{tablenotes}
\label{table:betapolynomes_d=4_s=2}
\end{table}

\section{Experimental data}
\label{sec:exp}

Finally, we apply our methods to experimental data to demonstrate their
robustness in real-life scenarios. Therefore, we pick two distinct use
cases, where the anisotropy of a random spatial structure can be easily
related to physical (or biological) properties: a cellular structure
(here represented by voxels) and a rough surface (here represented by a
point cloud).

The first example consists of three-dimensional tomography data by
Stinville \textit{et al.}~\cite{stinville_multi-modal_2022} for a
nickel-based superalloy that forms a polycrystalline material. This
material is studied under mechanical loading. Here, we are especially
interested in the anisotropy of the single cells, which can be well
approximated by sets of positive reach. The high resolution of this data
set enables accurate estimates via our Voronoi-LSQ algorithm despite
significant size disparities between cells (which
can have volumes ranging from $\mu\text{m}^3$ to mm$^3$).

In our second example, we challenge our method and probe it under
extreme conditions; we apply it to a data set that represents black silicon, i.e., nanorough
surfaces with structural features probably almost down to the atomic scale and
thus even below the already high resolution of the atomic-force
microscopy data from Spengler \textit{et
al.}~\cite{spengler_strength_2019}. The nanorough surfaces exhibit deep
cuts and high ridges; hence, they can hardly be modeled by sets of
positive reach (since the reach would be smaller than the resolution).
Instead, these nanorough surfaces can be well
approximated via the boundary of finite unions of compact sets with
positive reach, namely spherical caps, because of the fabrication process via
an etching procedure.
We specifically want to compare our two algorithms Voronoi-FD and -LSQ
for the surface area, and demonstrate consistency with previous results
based on triangulations.

\subsection{Metallic grains in a polycrystalline material}
\label{sec:grains}

\begin{figure}[t]
  \captionsetup{ labelfont = {bf}, format = plain }
  \centering
  \includegraphics[width=\linewidth]{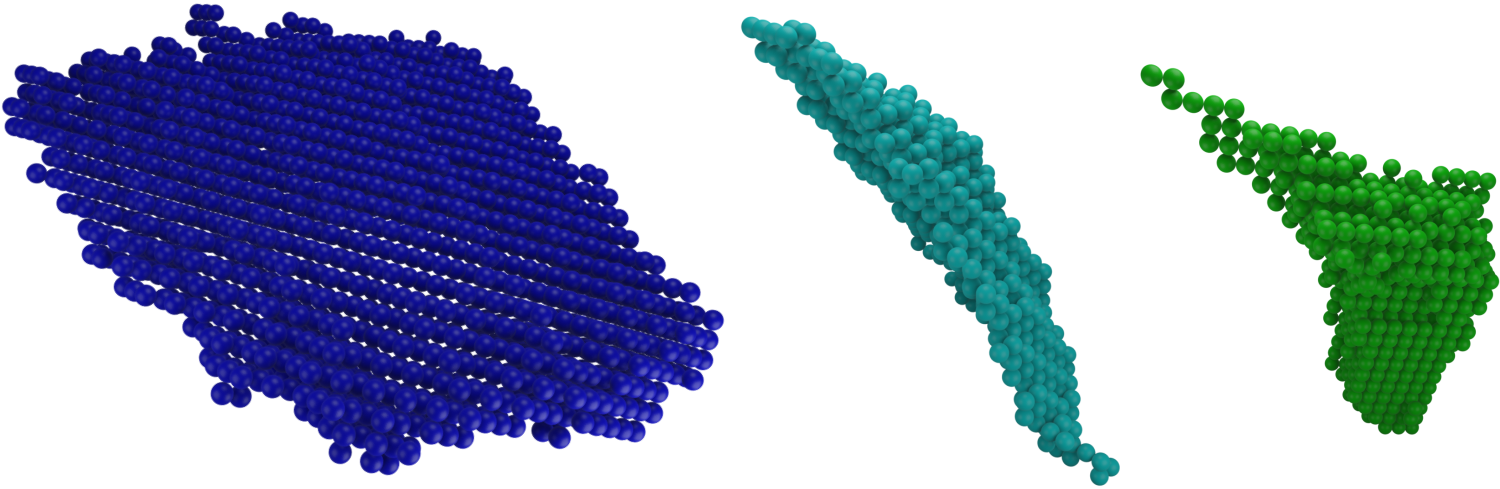}
  \caption{Examples of three (out of the 2546) metallic grains in a nickel-based superalloy
    from the experimental data set of Stinville \textit{et
    al.}~\cite{stinville_multi-modal_2022}. Each of the three grains is represented
    by a point cloud or more precisely, by balls centered at each voxel;
  the diameter of each ball equals the voxel size, i.e., 1\,$\mu$m.}
  \label{fig:grains}
\end{figure}

\begin{figure}[t]
  \captionsetup{ labelfont = {bf}, format = plain }
  \centering
  \subfloat[][]{\includegraphics[width=0.48\linewidth]{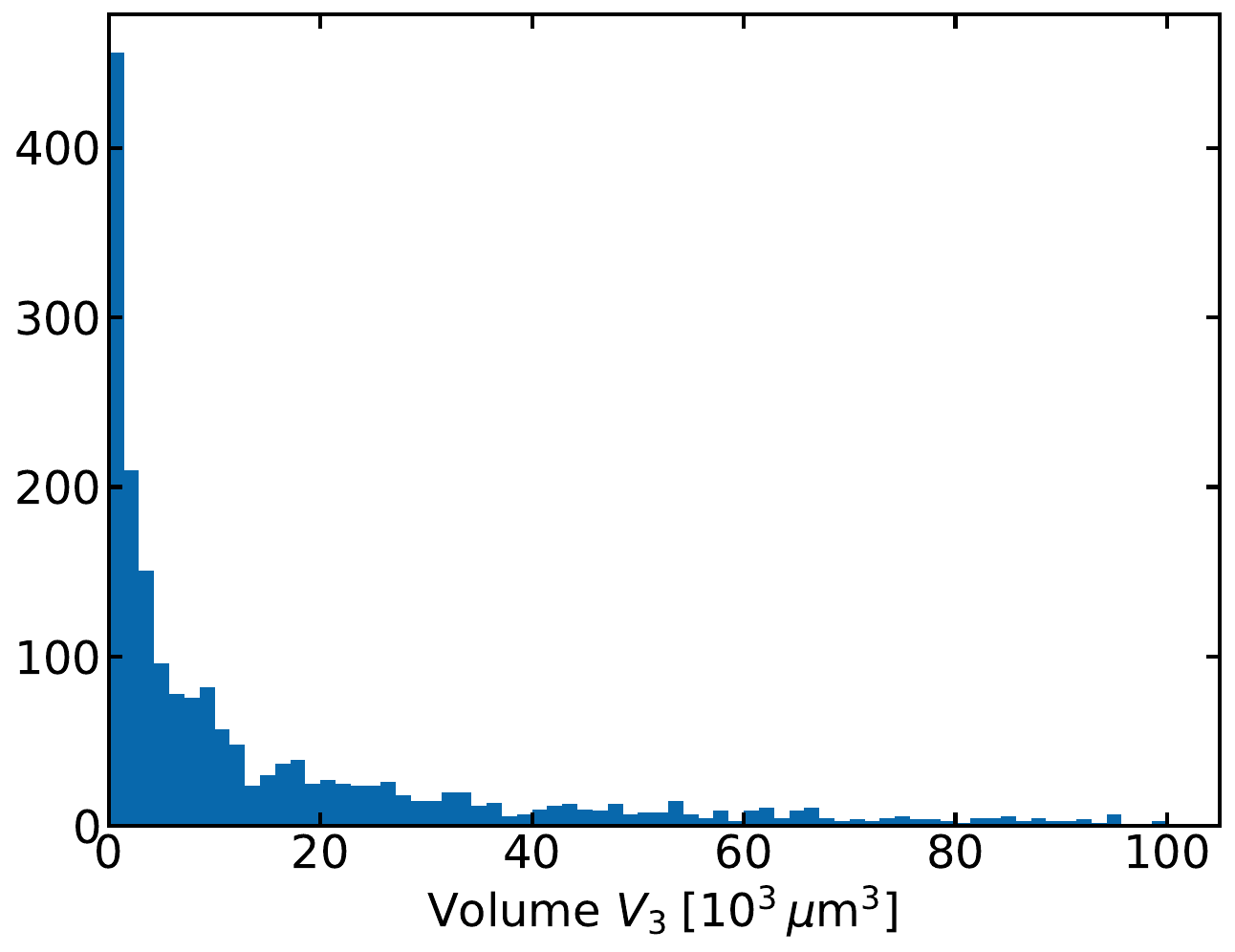}}
  \quad
  \subfloat[][]{\includegraphics[width=0.48\linewidth]{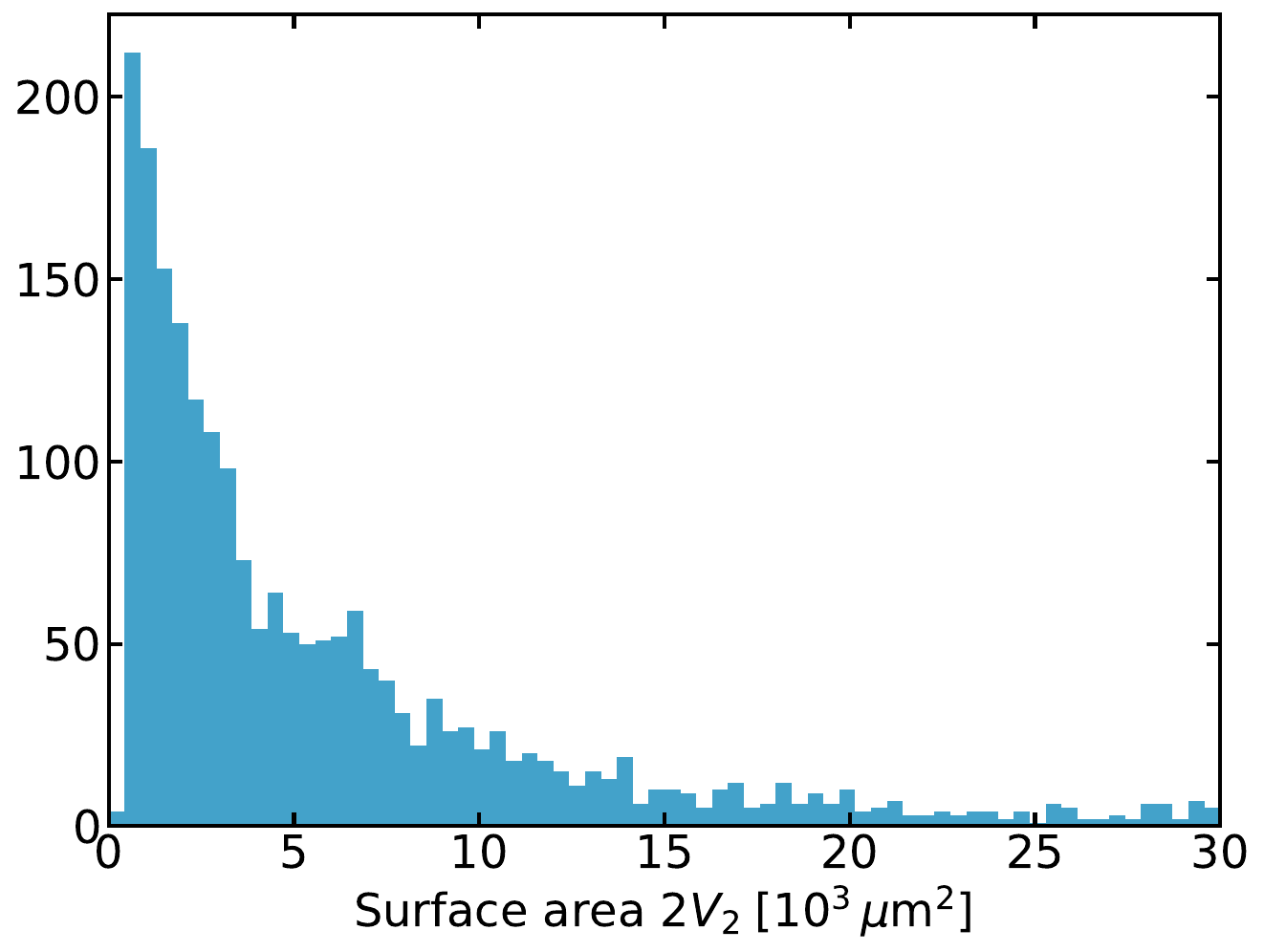}}
  \caption{Non-normalized histograms of the estimated volumes (a) and surface areas (b) of the metallic
  grains from Section~\ref{sec:grains} by using the Voronoi-LSQ algorithm. Besides a large number of small cells, we observe a few exceptionally
  large cells. For better visualization, only a range of values is shown, i.e., the most extreme cases
  are excluded here.
  The underlying experimental data of the grains is from~\cite{stinville_multi-modal_2022}.}
  \label{fig:IntrinsicVolumes}
\end{figure}

\begin{figure}[t]
  \captionsetup{ labelfont = {bf}, format = plain }
  \centering
  \subfloat[][]{\includegraphics[width=0.48\linewidth]{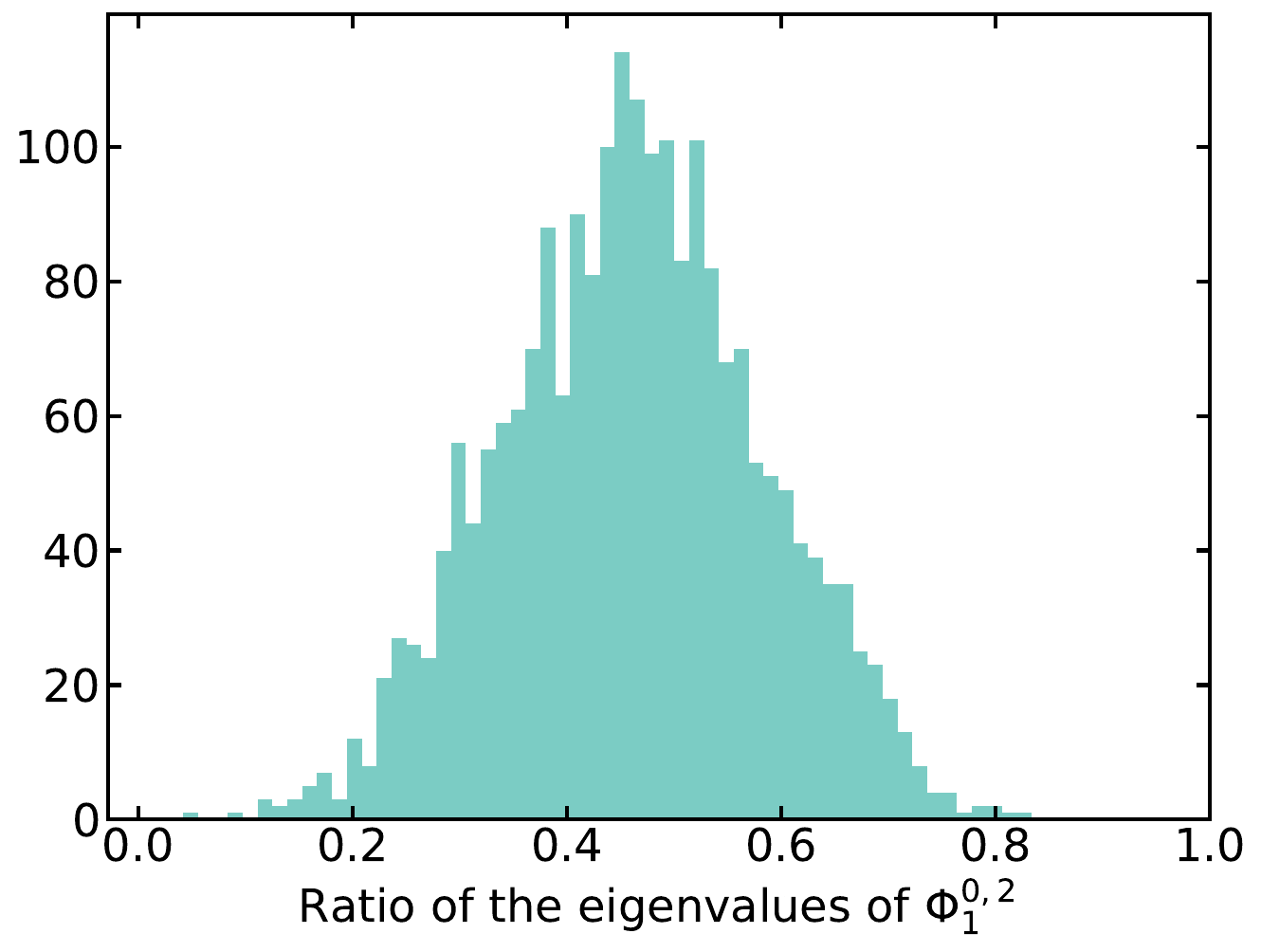}}
  \quad
  \subfloat[][]{\includegraphics[width=0.48\linewidth]{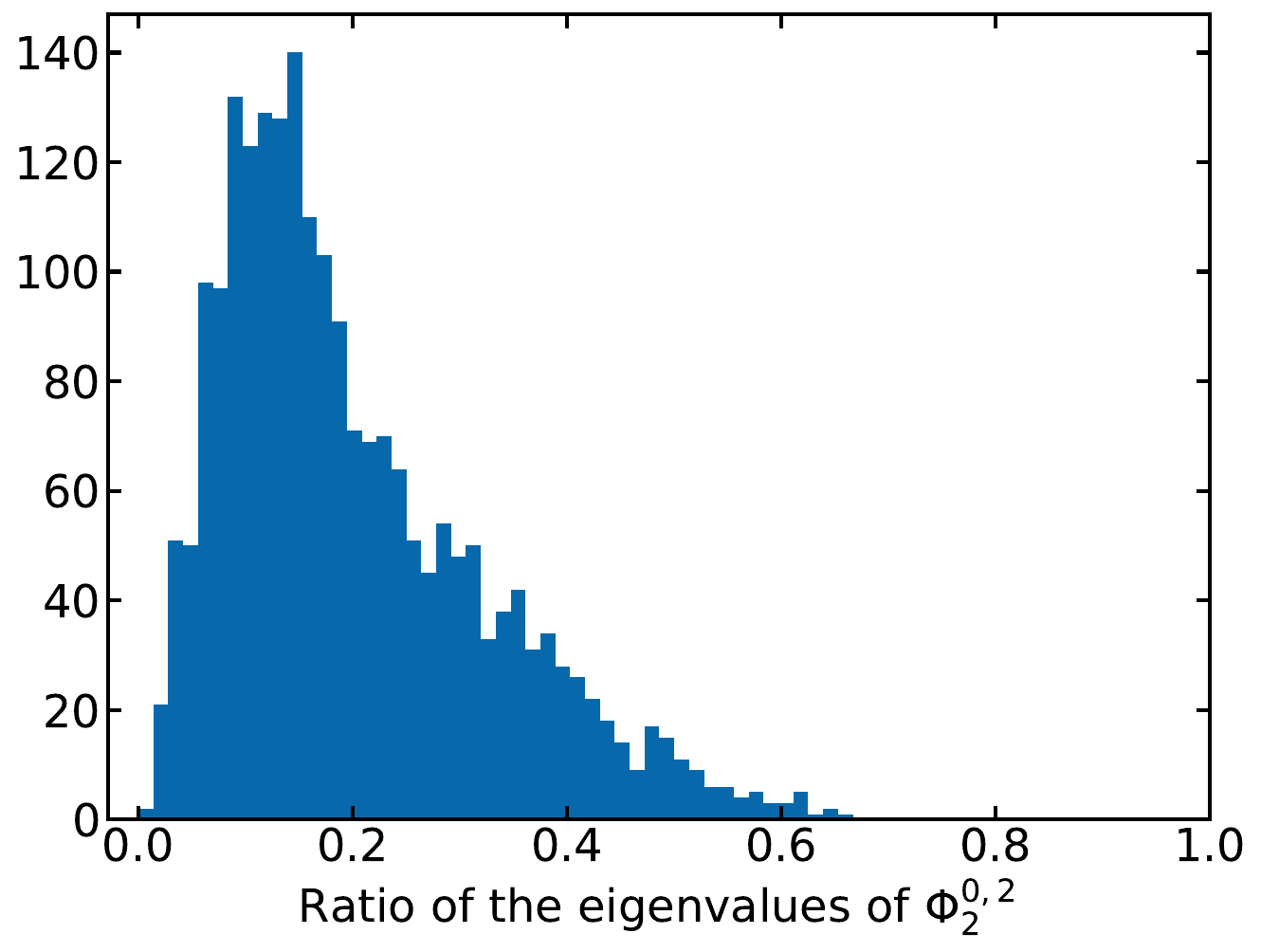}}
  \caption{Non-normalized histograms of anisotropy indices for the metallic grains from 
  Section~\ref{sec:grains} by using the Voronoi-LSQ algorithm. More specifically, the plots show histograms of the ratios between
  the smallest and largest absolute eigenvalues of the interfacial tensor $\Phi_1^{0,2}$ (a)
  and the curvature tensor $\Phi_2^{0,2}$ (b).
  The underlying experimental data of the grains is from~\cite{stinville_multi-modal_2022}.}
  \label{fig:Tensor}
\end{figure}

Stinville \textit{et al.}~\cite{stinville_multi-modal_2022} provides
three-dimensional tomography measurements of a polycrystalline
material. The data set was collected to investigate plastic
deformations during mechanical loading. Such insights into
structure-property relations can be used for the prediction and design
of mechanical properties based on the material's microstructure.
Obviously, accurate predictions require sensitive, yet robust structural
characteristics.

A natural geometric question for cellular materials (especially if
subject to mechanical loading) is how to quantify anisotropy of single
(deformed) cells, both the degree of anisotropy and its preferred
orientations. 
As indicated by Fig.~\ref{fig:intro_schematic}, the Minkowski tensors
$\Phi_{d-1}^{0,s}$ and $\Phi_{d-2}^{0,s}$ quantify anisotropy with
respect to different geometric
properties, namely surface area and mean curvature, respectively.

Here, we want to compare these two different measures of anisotropy
by applying our Voronoi-LSQ algorithm to the single metallic
grains from \cite{stinville_multi-modal_2022}. The data set contains,
among others, a voxelized reconstruction of 2546 individual
three-dimensional grains; see Figure~\ref{fig:grains}, for three
examples showing the voxel centers as a point cloud. The resolution
of the grid is 1\,$\mu$m. To exclude numerical artifacts, we apply a
data cut and only consider grains with at least 500 voxel centers,
which results in a data set with 2180 grains. Since those grains can
be considered to be sets of positive reach, we again restrict our
focus to the Voronoi-LSQ algorithm, as in Section~\ref{sec:beta}.
For each grain, we took the average over 10 renditions. We varied
$R_n$ between $4\cdot R_1$ and $24\cdot R_1$. The results from
Figure~\ref{fig:IntrinsicVolumes} and \ref{fig:Tensor} refer to the
choice $R_n=24\cdot R_1$.

We begin by demonstrating that our Voronoi-LSQ algorithm provides
robust estimates of the volume and surface area of the cells, which vary
by several orders of magnitude. Figure~\ref{fig:IntrinsicVolumes}
shows histograms of the estimated volumes and surface areas,
respectively. The distributions peak at very small values, but they
exhibit comparatively strong tails. Interestingly, the tails are more
pronounced for the volumes than the surface areas.

We then quantify the anisotropy of the cells via our estimates of
the tensors $\Phi_1^{0,2}$ and $\Phi_2^{0,2}$. Figure~\ref{fig:Tensor}
shows a histogram of the ratios between the smallest and biggest
absolute eigenvalues of these tensors. These ratios represent
scalar indices of anisotropy, which intuitively quantify the degree of
anisotropy with respect to the corresponding Minkowski tensor, and which
have already been successfully applied in
physics~\cite{schroder-turk_tensorial_2010,
schroder-turk_minkowski_2011, schroder-turk_minkowski_2013,
klatt_cell_2017, klatt_mean-intercept_2017}.

For the metallic grains, we detect via these indices a higher degree
of anisotropy with respect to the curvature than the surface area. A detailed
physical interpretation of these geometric findings is beyond the
scope of this paper, but our results underscore 
the need to choose a measure of anisotropy that reflects the
geometric property which is physically relevant.

\subsection{Nanorough surfaces}
\label{sec:nano}

Next, we apply our algorithms to a distinctly different experimental
data set that represents a nanorough surface known as ``black silicon'', from
\cite{spengler_strength_2019}, measured via atomic force
microscopy~(AFM). The surface is represented as a point cloud that is
formed by the $z$-values measured on a grid of $x$- and $y$-values.

Here, we want to estimate the surface area of the black silicon. Note
that the point cloud represents the interface as a two-dimensional
sheet. Therefore, at almost every point of the interface there are
two normal vectors with opposite orientation, i.e., facing `up' and
`down'. Hence, $\Phi_2^{0,0}$ of the two-dimensional sheet is twice the
value of $\Phi_2^{0,0}$ of the black silicon within the observation
window. The latter equals half of the surface area of the black silicon
within the observation window. Hence, the estimate of $\Phi_2^{0,0}$ of
the Voronoi-LSQ algorithm can directly be used as an estimate of the
surface area of the black silicon.

The roughness at the nanoscale was obtained by a
specific etching protocol; see \cite{spengler_strength_2019} for further
details. Both the protocol and three-dimensional visualizations of
the data sets (see Figure~\ref{fig:nanosamples}) suggest that the
nanorough surface can be described (or at least approximated) by a
finite union of compact sets with positive reach, but it is an
extreme test case for our methods because of the sharp features in the
rough surface. In fact, the nanorough surface is a difficult example
for any estimator of Minkowski tensors because there are features
probably almost down to the atomic scale and thus below the already high
resolution of about 6\,nm in $x$- and $y$-direction.

\begin{figure}[t]
  \captionsetup{ labelfont = {bf}, format = plain }
  \centering
  \subfloat[][]{\includegraphics[width=0.32\linewidth]{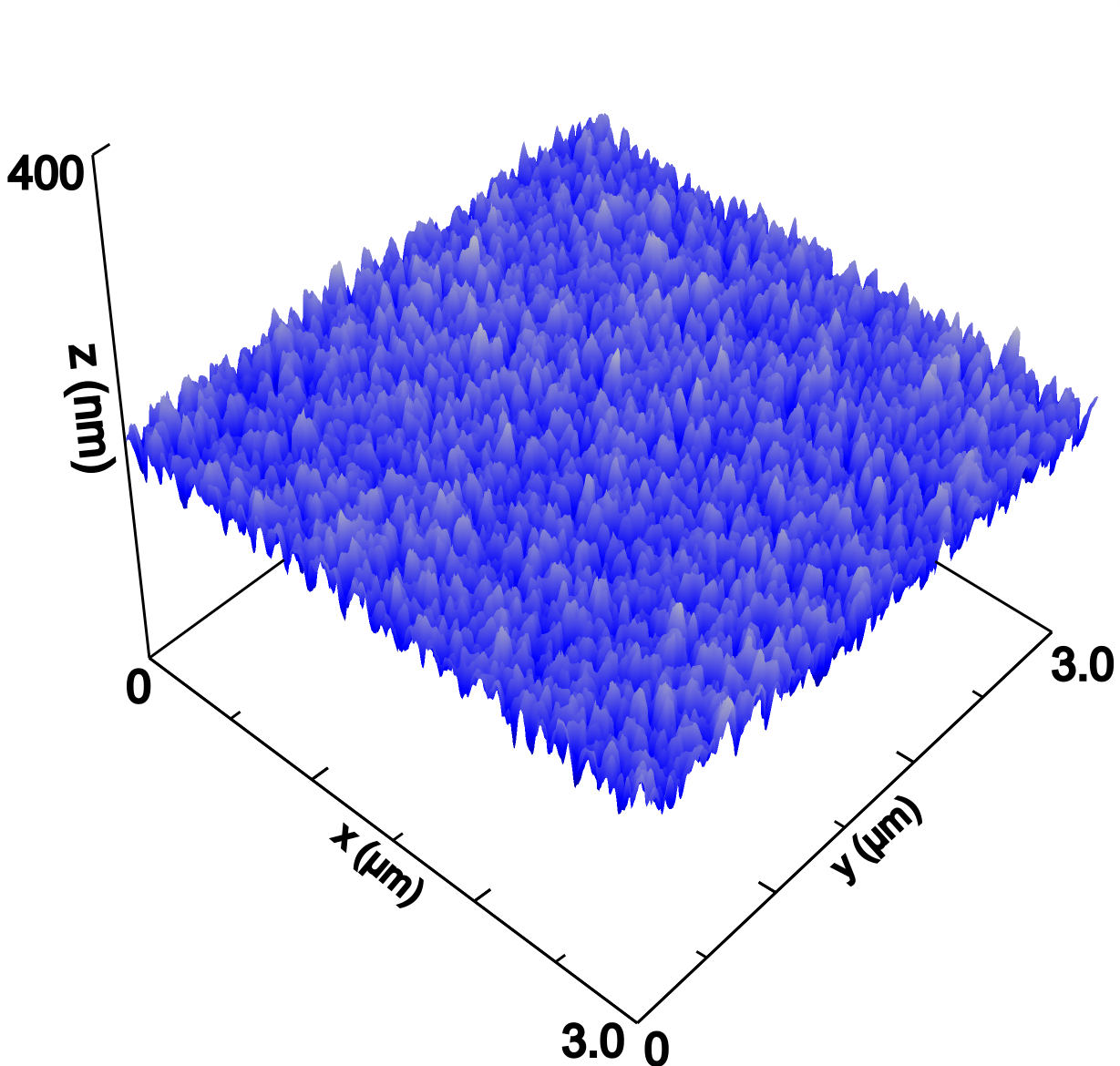}}
  \hfill
  \subfloat[][]{\includegraphics[width=0.32\linewidth]{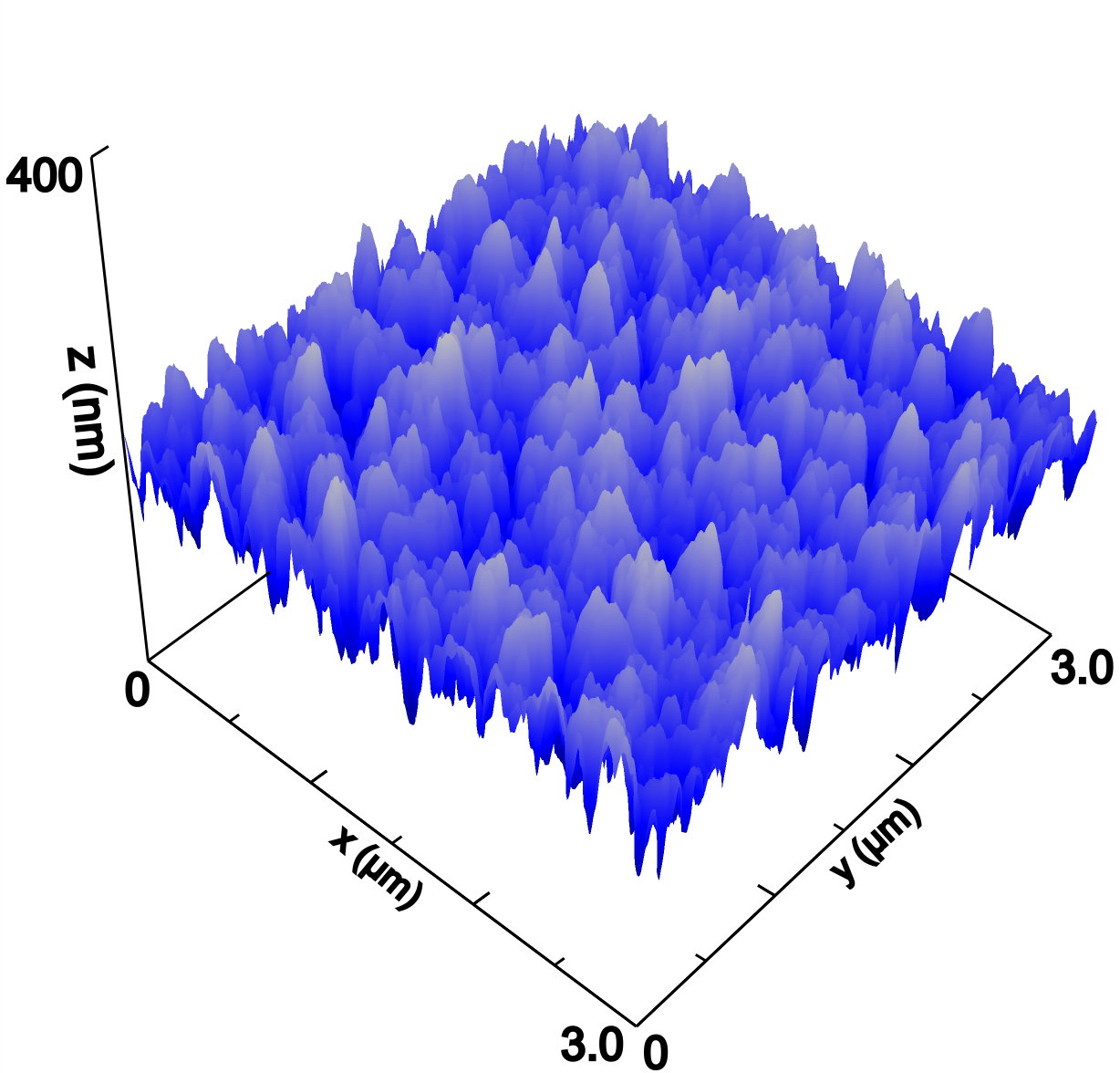}}
  \hfill
  \subfloat[][]{\includegraphics[width=0.32\linewidth]{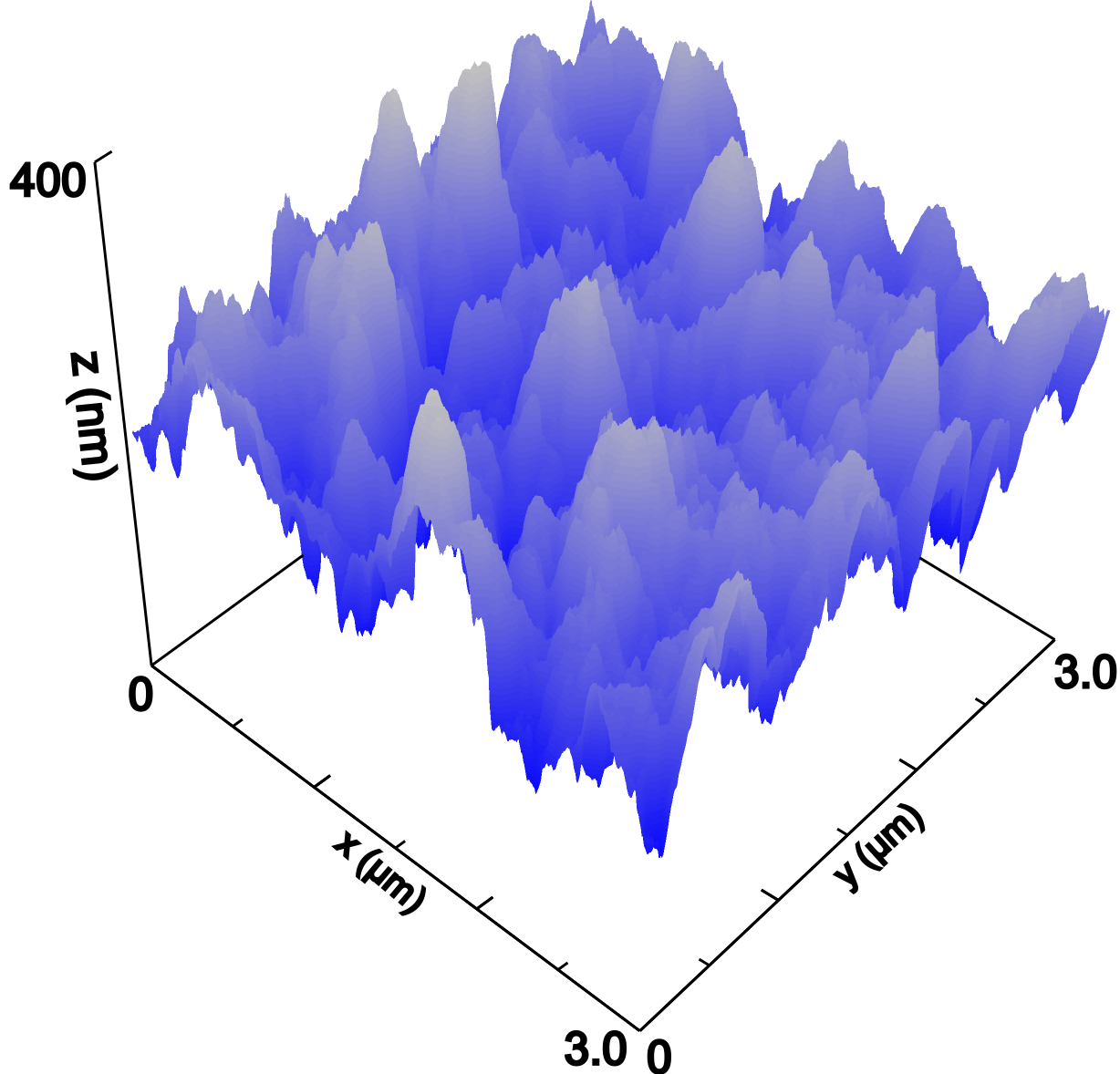}}
  \caption{Examples of nanorough surfaces from Figure~\ref{fig:Nanoscale} with three different 
  degrees of roughness, i.e., an RMS value of (a) 7\,nm, (b) 24\,nm, and (c) 35\,nm. The data 
  is taken from \cite{spengler_strength_2019}; the figures have been created by Jens Uwe Neurohr.}
  \label{fig:nanosamples}
\end{figure}

We cannot eliminate pixelation errors merely by increasing the
resolution to an arbitrarily high level. Instead, the data requires
accurate estimates of Minkowski tensors at a reasonable resolution.
Here we want to compare our algorithms to previous results from a
triangulation of the surface and show consistency between the results,
as well as a robustness of our algorithm for a reasonable range of
values of $R_n$.

\begin{figure}[p]
  \captionsetup{labelfont={bf}, format=plain}
  \centering
  \subfloat{\includegraphics[width=0.95\linewidth]{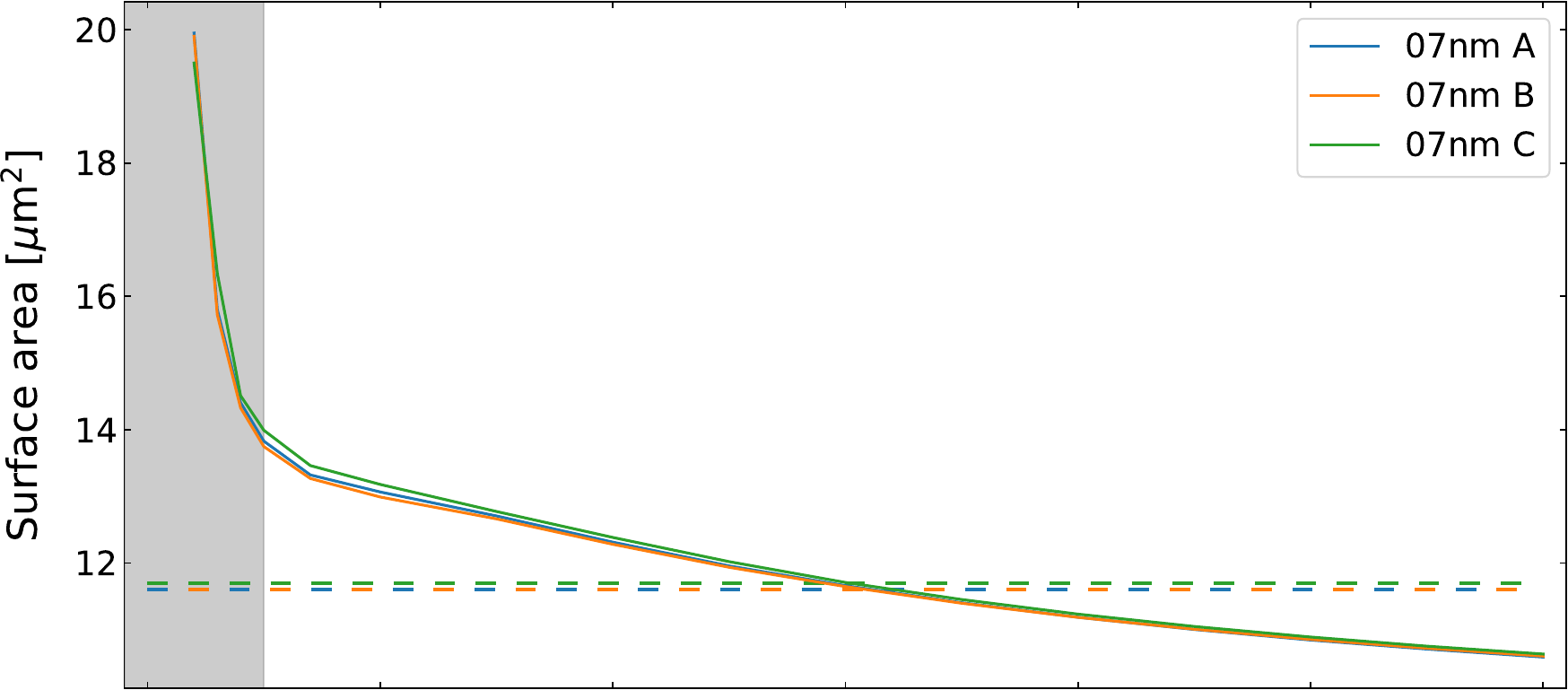}}\\[1ex]
  \subfloat{\includegraphics[width=0.95\linewidth]{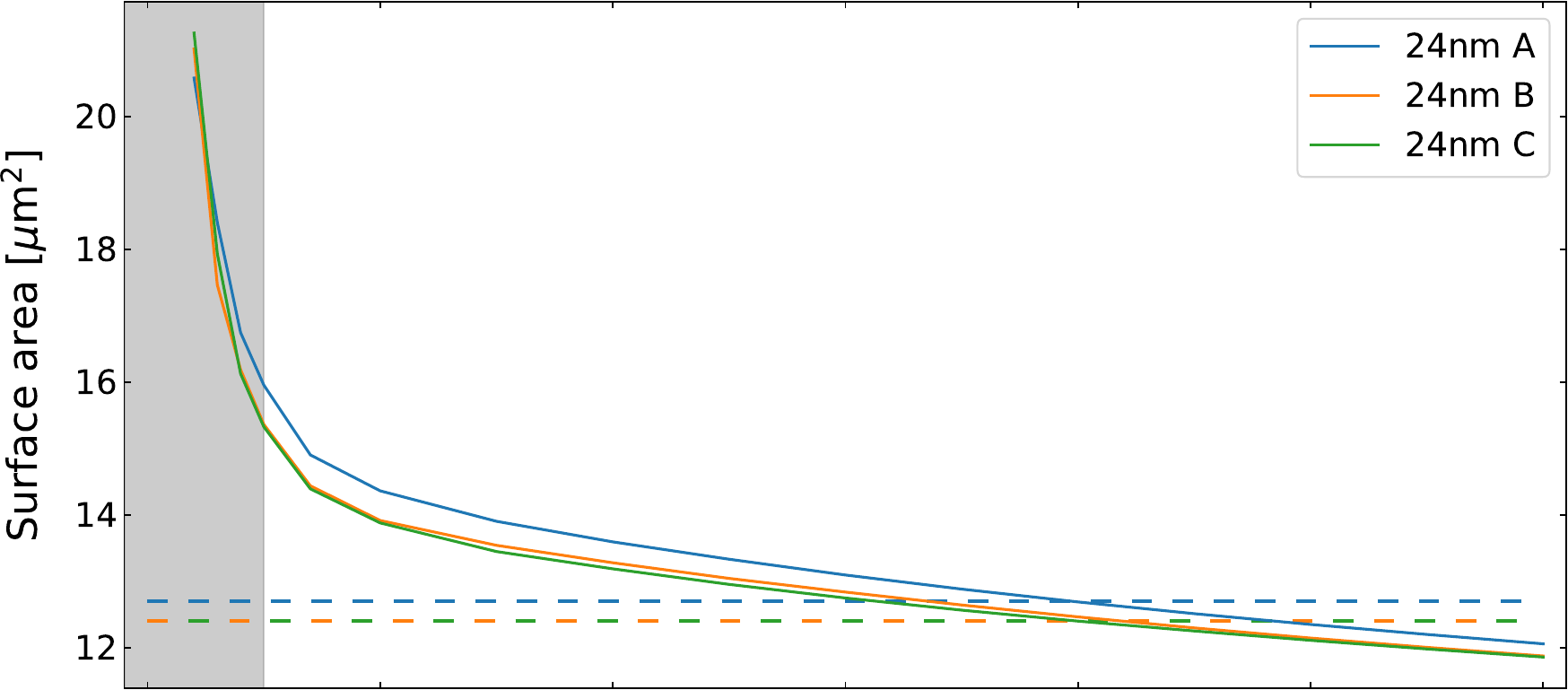}}\\[1ex]
  \subfloat{\includegraphics[width=0.95\linewidth]{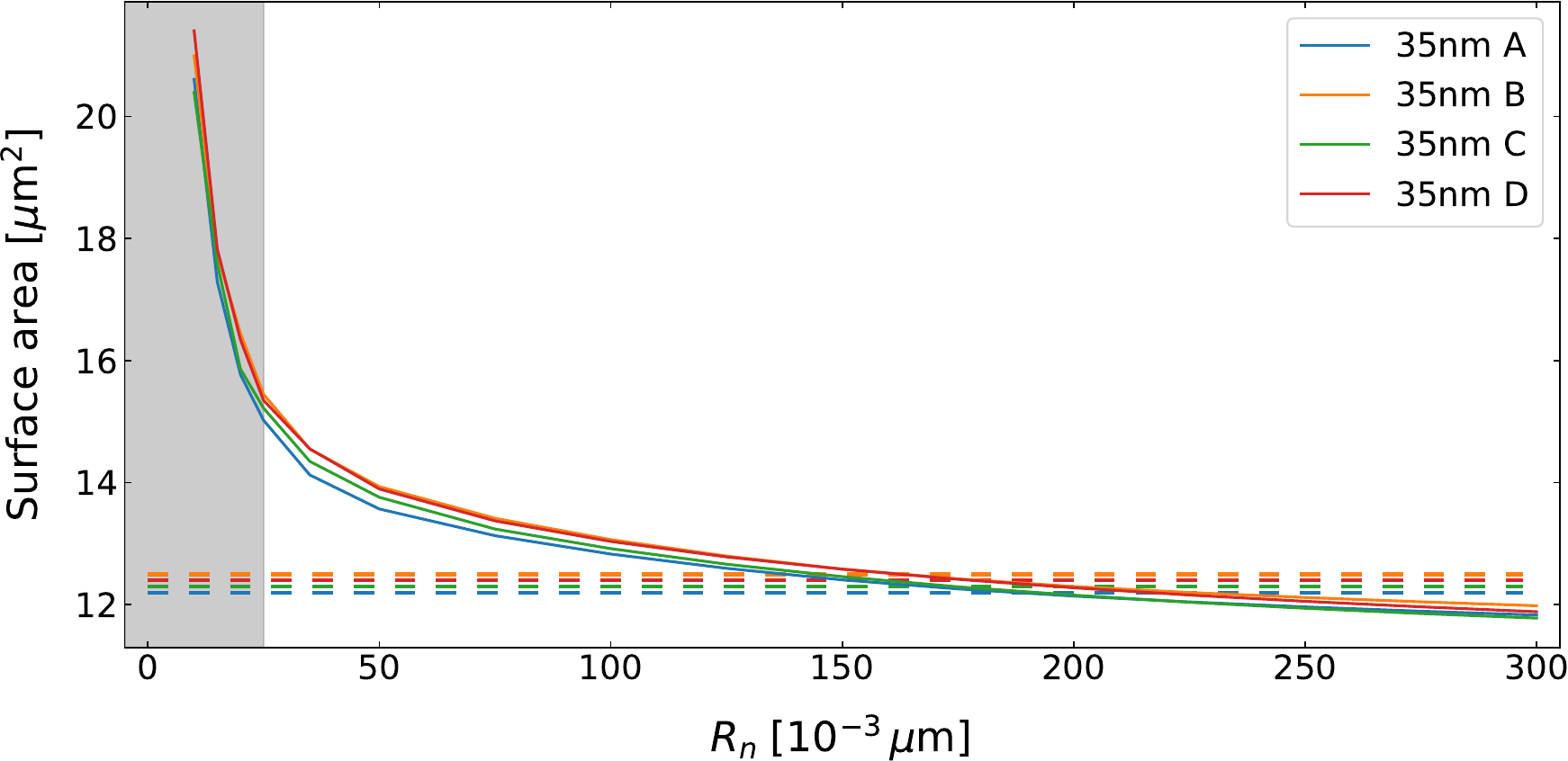}}
  \caption{Estimated surface areas for the different datasets as a
  function of $R_n$. The dashed lines indicate the values obtained via
  triangulation. The gray area corresponds to a choice of $R_n$ that
  violates the rule \eqref{Faustregel}.}
  \label{fig:Nanoscale}
\end{figure}

Here, we analyze ten samples in total with three different values
of the root mean square (RMS) of height distribution (7\,nm, 24\,nm, and
35\,nm). Each sample is measured within a scan window of size
$3\,\mu\text{m}\times3\,\mu\text{m}$ and a resolution of $512\times 512$
pixels, i.e., each surface is represented by a grid of three-dimensional
points, where the $z$-coordinate represents the height of the surface at
the corresponding $(x,y)$ position. 

Figure~\ref{fig:Nanoscale} shows the estimates of the surface area via
the Voronoi-LSQ algorithm as a function of $R_n$. A proper choice of
$R_n$ has to balance robustness (via a large range of radii) and
accuracy (via a small range of radii that resolves fine details of the
nanorough structure). From our test cases, we derived as a rule of thumb
that $R_n$ should be at least four times the average nearest neighbor
distance in the data; see \eqref{Faustregel}. This rule of thumb is indicated in
Fig.~\ref{fig:Nanoscale} by the gray-shaded area.

For larger values of $R_n$, we find consistent results, i.e., no
strong dependence on $R_n$ for a considerable range of
values, roughly from $25$ to $150\,$nm. For this range, our
estimated values are slightly larger than those obtained from a
triangulation of the surface as in \cite{spengler_strength_2019} using
methods from \cite{schroder-turk_minkowski_2013}. Considering how rough
these surfaces are and hence challenging for our method of
extrapolation, we observe a relatively good agreement of our methods
with the results from the triangulated data. Since the
triangulation-based method tends to underestimate the surface area, our
results are consistent within the accuracy of the data.

Our estimates via the Voronoi-LSQ algorithm are also consistent with
those from the Voronoi-FD algorithm shown in
Table~\ref{tab:nanorough_FD}; however, the results from the Voronoi-FD algorithm depend more strongly on the
choice of the radius $\varepsilon$. If the radius is too large, a boundary
correction is required for a two-dimension sheet and more importantly
the parallel set hides small features of the rough surface. If the
radius is too small, artifical `gaps' and `bumps' can lead to an
overestimation of the surface area. Here, we choose a radius of twice
the average nearest neighbor distance in the data. New data with an even higher
resolution will be needed to compare the results of the algorithms more precisely.

\begin{table}[t!]
\centering
\captionsetup{
  labelfont = {bf},
  format = plain,
  belowskip = 1ex,
  width = \textwidth
}
\caption{Estimated values of the surface area in $\mathrm{\mu m}^2$ via the Voronoi-FD algorithm in comparison with triangulation results based on data from \cite{spengler_strength_2019}.
We choose $\varepsilon=2\cdot a$ and $a$ as the average nearest neighbor distance in the data.
We used several samples with the same RMS value.
The underlying experimental data of the nanorough surfaces is from \cite{spengler_strength_2019}.}
\begin{tabular}{c|c|c|c|c|c}
  \toprule
  RMS & \textbf{Triangulation} & \textbf{Voronoi-FD} & RMS & \textbf{Triangulation} & \textbf{Voronoi-FD} \\
  \midrule
7 & 11.6 & 12.6 & 24 & 12.7 & 13.9  \\[1ex]
 & 11.6 & 12.6 & & 12.4 & 13.5  \\[1ex]
 & 11.7 & 12.6 & & 12.4 & 13.5 \\[1ex]
35 & 12.2 & 13.4 & 35 & 12.3 & 13.5  \\[1ex]
 & 12.5 & 13.6 & & 12.4 & 13.6 \\
\bottomrule
\end{tabular}
\label{tab:nanorough_FD}
\end{table}

\section*{Acknowledgments}
We thank Karin Jacobs and Christian Spengler for providing their data of nanorough
surfaces, and Jens Uwe Neurohr for Figure~\ref{fig:nanosamples}.
This work was supported in part by the Deutsche Forschungsgemeinschaft
(DFG, German Research Foundation) through the SPP 2265, under grant
numbers HU 1874/5-1, ME 1361/16-1, KL 3391/2-2, WI 5527/1-1, and LO 418/25-1, as well as by the
Helmholtz Association and the DLR via the Helmholtz Young Investigator
Group ``DataMat''.
The authors gratefully acknowledge the scientific support and HPC
resources provided by the Erlangen National High Performance Computing
Center (NHR@FAU) of the Friedrich-Alexander-Universität
Erlangen-Nürnberg (FAU). The hardware is funded by the German Research
Foundation (DFG).

\bibliographystyle{my-plain}
\bibliography{voromink}

\appendix

\section{Further simulation results}\label{Sec:App}

In this section, we provide additional simulation results for the algorithm described in the paper, based on solving a least-squares problem.
The following tables can be found on the subsequent pages:
\begin{itemize}
\item Table \ref{table:Rectangles}: Simulation results for the 2-dimensional rectangle $[-\frac{3}{2},\frac{3}{2}]\times[-\frac{5}{2},\frac{5}{2}]$
\item Table \ref{table:roundVert}: Simulation results for a 2-dimensional rectangle with rounded vertices.
Specifically, we mean the parallel set of a rectangle.
The parallel set of a compact set $K$ with parameter $r_0$ is the set of all points with a distance less than $r_0$ to $K$.
\item Table \ref{table:3dim}: Simulation results for the 3-dimensional rectangle $[-\frac{1}{2},\frac{1}{2}]\times[-1,1]\times[-\frac{3}{2},\frac{3}{2}]$.
\end{itemize}

\begin{table}[H]
\centering
\captionsetup{
  labelfont = {bf},
  format = plain,
  belowskip = 1ex,
  width = \textwidth
}
\caption{Results for the 2-dimensional rectangle $[-\frac{3}{2},\frac{3}{2}]\times[-\frac{5}{2},\frac{5}{2}]$ intersected with a grid of resolution $a=0.01$.
The parameter choices were $n=50$ and $R_{n}=2$.
We took the average of 50 renditions.
All values are rounded to the fourth significant digit.}
\begin{tabular}{ccc|ccc}
  \toprule
  Tensors &  Values & Algorithm & Tensors & Values & Algorithm \\
  \midrule
  \
  \ten 0 0 0 & 1 & 0.9999 & \ten 0 0 1 & 8 & 8.000 \\ [2ex]
  \ten 0 0 2 & 15 & 15.00 & (\ten 1 0 0)$_1$ & 0 & -4.249$\cdot 10^{-5}$ \\ [2ex]
  (\ten 1 0 0)$_2$ & 0 & 5.899$\cdot 10^{-6}$ & (\ten 1 0 1)$_1$ & 0 & 1.232$\cdot 10^{-4}$ \\ [2ex]
  (\ten 1 0 1)$_2$ & 0 & 1.137$\cdot 10^{-4}$ & (\ten 1 0 2)$_1$ & 0 & -6.011$\cdot 10^{-5}$ \\ [2ex]
  (\ten 1 0 2)$_2$ & 0 & -2.237$\cdot 10^{-4}$ & (\ten 0 1 0)$_1$ & 0 & -2.822$\cdot 10^{-5}$ \\ [2ex]
  (\ten 0 1 0)$_2$ & 0 & -2.609$\cdot 10^{-5}$ & (\ten 0 1 1)$_1$ & 0 & 1.988$\cdot 10^{-4}$ \\ [2ex]
  (\ten 0 1 1)$_2$ & 0 & 1.109$\cdot 10^{-4}$ & (\ten 0 1 2)$_1$ & 0 & -3.995$\cdot 10^{-4}$ \\ [2ex]
  (\ten 0 1 2)$_2$ & 0 & 3.05$\cdot 10^{-5}$ & (\ten 0 2 0)$_{1,1}$ & 0.07958 & 0.07958 \\ [2ex]
  (\ten 0 2 0)$_{1,2}$ & 0 & 4.382$\cdot 10^{-6}$ & (\ten 0 2 0)$_{2,2}$ & 0.07958 & 0.07957 \\ [2ex]
  (\ten 0 2 1)$_{1,1}$ & 0.3979 & 0.3979 & (\ten 0 2 1)$_{1,2}$ & 0 & -1.301$\cdot 10^{-5}$ \\ [2ex]
  (\ten 0 2 1)$_{2,2}$ & 0.2387 & 0.2387 & (\ten 0 2 2)$_{1,1}$ & 0 & 5.863$\cdot 10^{-5}$ \\ [2ex]
  (\ten 0 2 2)$_{1,2}$ & 0 & 9.168$\cdot 10^{-6}$ & (\ten 0 2 2)$_{2,2}$ & 0 & 5.234$\cdot 10^{-5}$ \\ [2ex]
  (\ten 2 0 0)$_{1,1}$ & 1.125 & 1.125 & (\ten 2 0 0)$_{1,2}$ & 0 & -6.546$\cdot 10^{-7}$ \\ [2ex]
  (\ten 2 0 0)$_{2,2}$ & 3.125 & 3.125 & (\ten 2 0 1)$_{1,1}$ & 6.75 & 6.750 \\ [2ex]
  (\ten 2 0 1)$_{1,2}$ & 0 & 1.239$\cdot 10^{-5}$ & (\ten 2 0 1)$_{2,2}$ & 14.58 & 14.58 \\ [2ex]
  (\ten 2 0 2)$_{1,1}$ & 5.625 & 5.625 & (\ten 2 0 2)$_{1,2}$ & 0 & -1.561$\cdot 10^{-5}$ \\ [2ex]
  (\ten 2 0 2)$_{2,2}$ & 15.63 & 15.62 & (\ten 1 1 0)$_{1,1}$ & 0.4775 & 0.4774 \\ [2ex]
  (\ten 1 1 0)$_{1,2}$ & 0 & -7.008$\cdot 10^{-6}$ & (\ten 1 1 0)$_{2,2}$ & 0.7958 & 0.7957 \\ [2ex]
  (\ten 1 1 1)$_{1,1}$ & 2.387 & 2.388 & (\ten 1 1 1)$_{1,2}$ & 0 & 1.739$\cdot 10^{-5}$ \\ [2ex]
  (\ten 1 1 1)$_{2,2}$ & 2.387 & 2.388 & (\ten 1 1 2)$_{1,1}$ & 0 & -6.019$\cdot 10^{-4}$ \\ [2ex]
  (\ten 1 1 2)$_{1,2}$ & 0 & -3.425$\cdot 10^{-6}$ & (\ten 1 1 2)$_{2,2}$ & 0 & -5.301$\cdot 10^{-4}$ \\ [2ex]
\bottomrule
\end{tabular}
\begin{tablenotes}
The standard errors of the values were all not greater than 9.5$\cdot 10^{-4}$. The 0 entries are rounded. Their absolute values were all not greater than 6.1$\cdot 10^{-4}$.
\end{tablenotes}
\label{table:Rectangles}
\end{table}

\begin{table}[H]
\centering
\captionsetup{
  labelfont = {bf},
  format = plain,
  belowskip = 1ex,
  width = \textwidth
}
\caption{Results for a 2-dimensional rectangle with rounded vertices intersected with a grid of resolution $a=0.005$.
Here $R_{r_0}$ denotes the parallel set of the rectangle $[-\frac{3}{2},\frac{3}{2}]\times[-\frac{5}{2},\frac{5}{2}]$ with parameter $r_0>0$.
The parameter choices were $n=5$ and $R_{n}=1$.
We took the average of 10 renditions.}
\begin{tabular}{ccc|ccc}
  \toprule
  Tensors &  Values & Algorithm & Tensors & Values & Algorithm \\
  \midrule
  \
  \ten 0 0 0 $(R_{\frac{1}{4}})$ & 0.999 & 1 & \ten 0 0 1 $(R_{\frac{1}{4}})$ & 8.784 & 8.785 \\ [2ex]
  \ten 0 0 2 $(R_{\frac{1}{4}})$ & 19.194 & 19.196 & \ten 0 2 1 $(R_{\frac{1}{4}})$ & $\begin{pmatrix} 0.269 & 0 \\ 0 & 0.429 \end{pmatrix}$ & $\begin{pmatrix} 0.270 & 0 \\ 0 & 0.429 \end{pmatrix}$ \\ [5ex]
  
  \ten 0 0 0 $(R_{\frac{1}{2}})$ & 0.999 & 1 & \ten 0 0 1 $(R_{\frac{1}{2}})$ & 9.571 & 9.571 \\ [2ex]
  \ten 0 0 2 $(R_{\frac{1}{2}})$ & 23.782 & 23.785 & \ten 0 2 1 $(R_{\frac{1}{2}})$ & $\begin{pmatrix} 0.301 & 0 \\ 0 & 0.460 \end{pmatrix}$ & $\begin{pmatrix} 0.301 & 0 \\ 0 & 0.460 \end{pmatrix}$ \\ [5ex]
  
  \ten 0 0 0 $(R_1)$ & 0.999 & 1 & \ten 0 0 1 $(R_1)$ & 11.143 & 11.142 \\ [2ex]
  \ten 0 0 2 $(R_1)$ & 34.134 & 34.142 & \ten 0 2 1 $(R_1)$ & $\begin{pmatrix} 0.363 & 0 \\ 0 & 0.522 \end{pmatrix}$ & $\begin{pmatrix} 0.364 & 0 \\ 0 & 0.523 \end{pmatrix}$ \\ [2ex]
  \bottomrule
\end{tabular}
\begin{tablenotes}
\end{tablenotes}
\label{table:roundVert}
\end{table}

\begin{table}[H]
\centering
\captionsetup{
  labelfont = {bf},
  format = plain,
  belowskip = 1ex,
  width = \textwidth
}
\caption{Results for the 3-dimensional rectangle $[-\frac{1}{2},\frac{1}{2}]\times[-1,1]\times[-\frac{3}{2},\frac{3}{2}]$ intersected with a grid of resolution $a=0.01$.
The parameter choices were $n=5$ and $R_{n}=1$.
We took the average of 28 renditions.}
\begin{tabular}{ccc|ccc}
  \toprule
  Tensors &  Values & Algorithm & Tensors & Values & Algorithm \\
  \midrule
  \
  \ten 0 0 0 & 1 & 1.005 & \ten 0 0 1 & 6 & 5.986 \\ [2ex]
  \ten 0 0 2 & 11 & 11.051 & (\ten 0 2 0)$_{1,1}$ & 0.080 & 0.075 \\ [2ex]
  (\ten 0 2 0)$_{2,2}$ & 0.080 & 0.079 & (\ten 0 2 0)$_{3,3}$ & 0.080 & 0.078 \\ [2ex]
  (\ten 0 2 1)$_{1,1}$ & 0.398 & 0.406 & (\ten 0 2 1)$_{2,2}$ & 0.318 & 0.319 \\ [2ex]
  (\ten 0 2 1)$_{3,3}$ & 0.239 & 0.243 & (\ten 0 2 2)$_{1,1}$ & 0.478 & 0.472 \\ [2ex]
  (\ten 0 2 2)$_{2,2}$ & 0.239 & 0.238 & (\ten 0 2 2)$_{3,3}$ & 0.159 & 0.156 \\ [2ex]
  (\ten 2 0 0)$_{1,1}$ & 0.125 & 0.126 & (\ten 2 0 0)$_{2,2}$ & 0.5 & 0.500 \\ [2ex]
  (\ten 2 0 0)$_{3,3}$ & 1.125 & 1.126 & (\ten 2 0 1)$_{1,1}$ & 0.667 & 0.665 \\ [2ex]
  (\ten 2 0 1)$_{2,2}$ & 2.333 & 2.333 & (\ten 2 0 1)$_{3,3}$ & 4.5 & 4.497 \\ [2ex]
  (\ten 2 0 2)$_{1,1}$ & 0.958 & 0.960 & (\ten 2 0 2)$_{2,2}$ & 2.833 & 2.835 \\ [2ex]
  (\ten 2 0 2)$_{3,3}$ & 5.625 & 5.629 & (\ten 2 0 3)$_{1,1}$ & 0.25 & 0.249 \\ [2ex]
  (\ten 2 0 3)$_{2,2}$ & 1 & 0.999 & (\ten 2 0 3)$_{3,3}$ & 2.25 & 2.248 \\ [2ex]
  (\ten 1 1 0)$_{1,1}$ & 0.159 & 0.159 & (\ten 1 1 0)$_{2,2}$ & 0.318 & 0.319 \\ [2ex]
  (\ten 1 1 0)$_{3,3}$ & 0.478 & 0.475 & (\ten 1 1 1)$_{1,1}$ & 0.796 & 0.795 \\ [2ex]
  (\ten 1 1 1)$_{2,2}$ & 1.273 & 1.272 & (\ten 1 1 1)$_{3,3}$ & 1.432 & 1.437 \\ [2ex]
  (\ten 1 1 2)$_{1,1}$ & 0.955 & 0.958 & (\ten 1 1 2)$_{2,2}$ & 0.955 & 0.956 \\ [2ex]
  (\ten 1 1 2)$_{3,3}$ & 0.955 & 0.952 & & & \\ [2ex]
\bottomrule
\end{tabular}
\begin{tablenotes}
\end{tablenotes}
\label{table:3dim}
\end{table}

\end{document}